\newcounter{mparcnt}
\newtheorem{theorem}{Theorem}[section]
\newtheorem{lemma}[theorem]{Lemma}
\newtheorem{proposition}[theorem]{Proposition}
\newtheorem{corollary}[theorem]{Corollary}
\newtheorem{remark}[theorem]{Remark}
\newcommand{\abs}[1]{\lvert#1\rvert}
\newcommand{\Abs}[1]{\left\lvert#1\right\rvert}
\newcommand{\norm}[1]{\lVert#1\rVert}
\newcommand{\rd}{{\rm d}}
\newcommand{\rVol}{{\rm Vol}}
\newcommand{\rvol}{{\rm vol}}
\newcommand{\D}{{\slashed{D}}}
\newcommand{\rRe}{{\rm Re}}
\def\<{\langle}
\def\>{\rangle}
\def\S{\mathbb{S}}
\def\R{\mathbb{R}}
\newcommand{\ra}{\rightarrow}
\newcommand{\eq}[1]{\begin{equation}\allowdisplaybreaks\begin{alignedat}{2} #1 \end{alignedat}\end{equation}}
\numberwithin{equation} {section}
\begin{document}

	
\title[Stability of spinorial Sobolev inequalities]{Stability of spinorial Sobolev inequalities on $\S^n$}
\date{\today}


\author{Guofang Wang}
\address{ University of Freiburg,
Inst. of Math.,
Ernst-Zermelo-Str. 1,
D-79104 Freiburg, Germany}
\email{guofang.wang@math.uni-freiburg.de}

\author{Mingwei Zhang}
\address{ Wuhan University, School of Mathematics and Statistics, 430072 Wuhan, China and 
University of Freiburg,
Inst. of Math.,
Ernst-Zermelo-Str. 1,
D-79104 Freiburg, Germany}
\email{zhangmwmath@whu.edu.cn}

\begin{abstract}
The
spinorial Sobolev inequality on the unit sphere states

\eq{
        \Big(\int\Abs{\D\psi}^{\frac{2n}{n+1}}\Big)^{\frac{n+1}{n}}-\frac{n}{2}\omega_{n}^{1/n}\int\<\D\psi,\psi\>
        \geq 0,
    }
with equality if and only if $\psi \in {\mathcal M}$, the set of all $-\frac 12$-Killing spinors and their conformal transformations. 
    Our main result in this paper is to refine this inequality by establishing 
    a stability inequality
      \eq{
\Big(\int\Abs{\D\psi}^{\frac{2n}{n+1}}\Big)^{\frac{n+1}{n}}-\frac{n}{2}\omega_{n}^{1/n}\int\<\D\psi,\psi\>
        \geq {\bf c}_S\inf_{\phi\in\mathcal{M}}\Big(\int\Abs{\D(\psi-\phi)}^{\frac{2n}{n+1}}\Big)^{\frac{n+1}{n}}.
    }
       As a by-product of our argument, we show that elements in  set $\mathcal M$ are not optimizers of another spinorial Sobolev inequality
    \eq{
        \Big(\int\Abs{\D\psi}^{\frac{2n}{n+1}}\Big)^{\frac{n+1}{n}} \ge C_S  \Big(\int\Abs{\psi}^{\frac{2n}{n-1}}\Big)^{\frac{n-1}{n}},
    } 
   unlike  expected by experts. They have in fact index $n+1$ and nullity $2^{[\frac n2]+2}$.

\

\

\noindent {\bf MSC 2020: }    53C27, 58C40,  35A23  
\\ 
{\bf Keywords:} Spinorial Sobolev inequality, B{\"a}r-Hijazi-Lott invariant, stability, optimizer, Killing spinor, eigenspinor\\
\end{abstract}

\maketitle
\tableofcontents
\section{Introduction}

On a closed $n$-dimensional ($n\geq2$) spin manifold $(M,g,\sigma)$ with a fixed spin structure $\sigma$ we define
\eq{
    \lambda_{{\rm min}}^{+}(M,[g],\sigma)\coloneqq \inf_{\Tilde{g}\in [g]}\lambda_{1}^{+}(\D_{\Tilde{g}}){\rVol}(M,\Tilde{g})^{1/n},
}
where $[g]$ is the conformal class of $g$ and $\lambda_{1}^{+}(\D_{\Tilde{g}})$ is the smallest positive eigenvalue of Dirac operator $\D_{\Tilde{g}}$. 
The invariant is called B{\"a}r-Hijazi-Lott invariant in  \cite{A03}. Its positivity is proved in \cite{LL01} and \cite{A09}.

In \cite{A03} Ammann showed that  the B{\"a}r-Hijazi-Lott invariant can be interpreted as the following Yamabe type constant
\eq{
    \lambda_{{\rm min}}^{+}(M,[g],\sigma)=\inf_{\int_{M}\<\D_{g}\psi,\psi\>_{g}\rd\rvol_{g}>0}J(\psi,g),
}
which we will call spinorial Yamabe invariant (or constant) and denote by $Y_s(M,[g])$. 
Here $\psi\in \Gamma(M,\Sigma M)$ is a spinor field on $M$ and the functional $J(\psi,g)$ is defined by
\eq{
  J(\psi) \coloneqq  J(\psi,g)\coloneqq \frac{\Big(\int_{M}\Abs{\D_{g}\psi}_{g}^{\frac{2n}{n+1}}\rd\rvol_{g}\Big)^{\frac{n+1}{n}}}{\int_{M}\<\D_{g}\psi,\psi\>_{g}\rd\rvol_{g}}.
}
We often omit the fixed spin structure, if there is no confusion.
As the ordinary Yamabe invariant, the spinorial Yamabe invariant plays an important role in the spinorial Yamabe problem. For the spinorial Yamabe problem we refer to \cites{AGEB08, Ammann_Humbert_Morel, Isobe_Sire_Xu, Jurgen_Julio-Batalla}.

The Hijazi inequality (see \cite{H86}) and the ordinary Yamabe constant imply
that
\begin{equation}
\label{eq00}
\lambda^+_{\rm min}(\mathbb{S}^n) =\frac n2 \omega_n^{1/n}\end{equation}
when $n\ge 3$. When $n=2$, \eqref{eq00} is the so-called B\"ar's inequality \cite{Baer92}.
It follows that
on $(\S^n,g_{{\rm st}})$ we have
\eq{\label{optimal-constant}
  Y_s(\mathbb{S}^n,[g_{{\rm st}}]) = \inf_{\int\<\D\psi,\psi\>>0}J(\psi)=\frac{n}{2}\omega_{n}^{1/n}.
}
It was proven in \cite{A09} that the infimum is attained if and only if $\psi$ is a non-zero $-\frac{1}{2}$-Killing spinor, up to a conformal transformation of $\S^n$. Equivalently  \eqref{optimal-constant} can be stated
as  the following sharp spinorial Sobolev inequality
\eq{\label{spinorial-Sobolev}
    \Big(\int_{\mathbb{S}^n}\Abs{\D\psi}^{\frac{2n}{n+1}}\Big)^{\frac{n+1}{n}}\geq\frac{n}{2}\omega_{n}^{1/n}\int_{\mathbb{S}^n}\<\D\psi,\psi\>,
}
with equality if and only if
$\psi$ is a $-\frac 12$-Killing spinor up to a orientation preserving conformal transformation of $\mathbb{S}^n$. We denote the set of all such non-zero optimizers by $\mathcal M$.

The main objective of this paper is to study the stability of the spinorial Sobolev inequality.

\

{\it Does  $J(\psi)$ being close to the optimal value $\frac{n}{2}\omega_{n}^{1/n}$ imply that $\psi$ being close to an optimizer, $-\frac 12$-Killing spinor (up to a conformal transformation), in a suitable sense?}

\

In this paper we give an affirmative answer by proving the following global stability inequality.

\begin{theorem}\label{global_stability_inequality}
    Let $n\geq 2$. There exists a constant ${\bf c}_S>0$, depending only on $n$, such that for any spinor field $\psi\in W^{1, \frac {2n}{n+1} }$ on the standard  sphere $\S^n$ we have
    \eq{\label{thm1_eq}
        \Big(\int\Abs{\D\psi}^{\frac{2n}{n+1}}\Big)^{\frac{n+1}{n}}-\frac{n}{2}\omega_{n}^{1/n}\int\<\D\psi,\psi\>
        \geq {\bf c}_S\inf_{\phi\in\mathcal{M}}\Big(\int\Abs{\D(\psi-\phi)}^{\frac{2n}{n+1}}\Big)^{\frac{n+1}{n}},
    }
    where $\mathcal{M}$ is the set of optimizers.
\end{theorem}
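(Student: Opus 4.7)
The plan is to adapt the Bianchi--Egnell scheme for stability of Sobolev-type inequalities to the spinorial setting. Writing $p=\frac{2n}{n+1}$ and denoting the deficit and the distance to $\mathcal M$ by
\[
\delta(\psi):=\Big(\int\Abs{\D\psi}^{p}\Big)^{2/p}-\frac{n}{2}\omega_{n}^{1/n}\int\<\D\psi,\psi\>,\qquad
d(\psi):=\inf_{\phi\in\mathcal{M}}\Big(\int\Abs{\D(\psi-\phi)}^{p}\Big)^{1/p},
\]
both $\delta$ and $d^{2}$ are homogeneous of degree $2$ in $\psi$ and invariant under the orientation-preserving conformal group of $\S^n$. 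I would prove \eqref{thm1_eq} in two steps: a local quadratic coercivity $\delta(\psi)\geq c\,d(\psi)^{2}$ in a $W^{1,p}$-neighborhood of $\mathcal{M}$, followed by a global reduction based on a concentration-compactness argument.

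\textbf{Local step.} Fix $\phi_{0}\in\mathcal{M}$ and set $\psi=\phi_{0}+\eta$. Because $\D\phi_{0}=-\frac{n}{2}\phi_{0}$ is pointwise non-vanishing on $\S^n$, I would Taylor-expand $|\D\phi_{0}+\D\eta|^{p}$ pointwise around $\D\phi_{0}$ and integrate. Since $\phi_{0}$ is critical for $J$, the first-order term in $\eta$ cancels, leaving $\delta(\phi_{0}+\eta)=Q(\eta)+R(\eta)$ with $R(\eta)=o(\|\D\eta\|_{L^{p}}^{2})$ for small $\eta$. The quadratic form $Q$ can then be diagonalized against the eigenspinor decomposition of $\D$ on $\S^n$ (eigenvalues $\pm(\frac{n}{2}+k)$, $k\geq 0$). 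Its kernel equals the tangent space $T_{\phi_{0}}\mathcal{M}$, generated by the space of $-\tfrac12$-Killing spinors together with the infinitesimal conformal deformations of $\phi_{0}$; establishing a strictly positive spectral gap of $Q$ on the $L^{2}$-orthogonal complement reduces to a finite-dimensional computation on the lowest few eigenspaces, in the same spirit as the index and nullity calculations announced in the abstract for the auxiliary Sobolev inequality.

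\textbf{Global step.} Suppose \eqref{thm1_eq} fails, so there is a sequence $\psi_{k}$ with $\delta(\psi_{k})/d(\psi_{k})^{2}\to 0$. Normalizing $\|\D\psi_{k}\|_{L^{p}}=1$, this becomes a minimizing sequence for $J$, so by the characterization of optimizers in \cite{A09} I would invoke a spinorial profile decomposition in $W^{1,p}(\S^n)$, in the spirit of Struwe's decomposition for the Yamabe problem: either $\psi_{k}=T_{k}\phi+r_{k}$ for a single conformally transported optimizer with $\|\D r_{k}\|_{L^{p}}\to 0$, or $\psi_{k}=\sum_{j=1}^{N}T_{k}^{(j)}\phi^{(j)}+r_{k}$ with $N\geq 2$ bubbles whose concentration scales separate. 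A Brezis--Lieb type subadditivity of $\|\D\cdot\|_{L^{p}}^{p}$ combined with the constraint $\int\<\D\psi_{k},\psi_{k}\>\to\frac{2}{n}\omega_{n}^{-1/n}$ rules out the multi-bubble case: it would force $\delta(\psi_{k})$ uniformly positive while $d(\psi_{k})$ stays bounded, violating $\delta/d^{2}\to 0$. Hence $N=1$, and using the conformal invariance of both sides of \eqref{thm1_eq} to pull back by $T_{k}^{-1}$ drops us into the local regime, where the previous step delivers the contradiction.

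\textbf{Main obstacle.} The central technical difficulty is the nonlinearity at the exponent $p=\frac{2n}{n+1}<2$: the functional $u\mapsto\|u\|_{L^{p}}^{2}$ is not twice Fr\'echet-differentiable where $u$ vanishes, and even at a non-vanishing $u=\D\phi_{0}$ the natural second variation is a weighted quadratic form distinct from $\|\cdot\|_{L^{p}}^{2}$. To make the remainder estimate $R(\eta)=o(\|\D\eta\|_{L^{p}}^{2})$ rigorous, I expect to need uniform $L^{\infty}$ bounds on $\D\eta$, obtained by bootstrapping elliptic regularity for $\D$ and exploiting the pointwise lower bound $|\D\phi_{0}|=\frac{n}{2}>0$. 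A secondary difficulty is the $W^{1,p}$ profile decomposition for $p\neq 2$: the absence of a Hilbert inner product forces one to replace orthogonality by Brezis--Lieb-type splittings and requires delicate scale-separation estimates for concentrating spinorial bubbles under the non-compact conformal action on $\S^n$.
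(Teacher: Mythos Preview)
Your overall architecture (local quadratic coercivity plus a global contradiction via concentration compactness) matches the paper's. The global step is essentially what the paper does. But there are two genuine gaps in your local step.

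\textbf{The $p<2$ remainder.} Your proposed fix---bootstrapping elliptic regularity to get $L^{\infty}$ control on $\D\eta$---cannot work: $\eta$ is an arbitrary element of $W^{1,p}$, not a solution of any equation, so there is nothing to bootstrap, and $\D\eta$ is merely in $L^{p}$. A true Taylor expansion with remainder $o(\|\D\eta\|_{L^{p}}^{2})$ is simply unavailable. The paper avoids this entirely by importing the Figalli--Zhang machinery: instead of expanding, one uses a \emph{pointwise algebraic lower bound} for $|x+y|^{p}$ (their Lemma~2.1, restated here as Corollary~\ref{appendix_cor}) that produces, after integration, the quadratic form plus a positive error term $c(\kappa)\int\min\{|\D\varphi|^{p},(\tfrac{n}{2}a)^{p-2}|\D\varphi|^{2}\}$. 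One then shows by a separate compactness/contradiction argument (Lemma~\ref{appendix_lem}) that this package dominates the needed quantity; the limiting object there lands in $W^{1,2}$, which is where the spectral gap is actually applied. None of this goes through a second-order Taylor remainder.

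\textbf{The spectral gap.} Calling it ``a finite-dimensional computation on the lowest few eigenspaces'' undersells the difficulty and misses the paper's main contribution. The second variation contains the term $\int\langle\xi,\D\varphi\rangle^{2}$, which does \emph{not} diagonalize in the eigenspinor decomposition: for $\varphi_{\pm k}\in E_{\pm k}$ the quantities $\int\langle\xi,\varphi_{\pm k}\rangle^{2}$ are not determined by $\|\varphi_{\pm k}\|_{L^{2}}$. The paper first proves the orthogonality $\int\langle\xi,\varphi_{\pm k}\rangle\langle\xi,\varphi_{\pm j}\rangle=0$ for $k\neq j$ (so one can work blockwise on $F_{k}=E_{k}\oplus E_{-k}$), and then the sharp inequalities
\[
\int\langle\xi,\varphi_{k}\rangle^{2}\le\frac{n+k-1}{n+2k-1}\int|\varphi_{k}|^{2},\qquad
\int\langle\xi,\varphi_{-k}\rangle^{2}\le\frac{k}{n+2k-1}\int|\varphi_{-k}|^{2},
\]
together with the classification of equality (Proposition~\ref{intro_estimate}). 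For $k\ge 3$ Cauchy--Schwarz suffices, but for $k=1,2$ these optimal constants are indispensable; in particular, for $k=1$ the equality case is exactly what identifies the kernel with $T_{\xi}\mathcal{M}=E_{0}\oplus Q_{\xi}$. This is not a routine eigenvalue check.

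(A minor point: $-\tfrac12$-Killing spinors satisfy $\D\xi=\tfrac{n}{2}\xi$, not $-\tfrac{n}{2}\xi$.)
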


From now on, integrals are taken over $\S^n$ with respect to the standard round metric unless specified. As a direct corollary  we have

\begin{corollary}
    Theorem \ref{global_stability_inequality} implies that there exists a constant ${\bf c}'_S$, depending only on $n$, such that
  \eq{\label{Stable_ineq2}
        \frac{ \Big(\int\Abs{\D\psi}^{\frac{2n}{n+1}}\Big)^{\frac{n+1}{n}}}
        {\int\<\D\psi,\psi\>}
        -\frac{n}{2}\omega_{n}^{1/n}\ge {\bf c}'_S
\inf_{\phi\in\mathcal{M}}\frac{\Big(\int\Abs{\D(\psi-\phi)}^{\frac{2n}{n+1}}\Big)^{\frac{n+1}{n}}}{\Big(\int\Abs{\D\psi}^{\frac{2n}{n+1}}\Big)^{\frac{n+1}{n}}}, \quad \forall \,\psi  \hbox{ with }\int\<\D\psi,\psi \> >0.
    }
\end{corollary}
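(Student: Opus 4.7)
The plan is to deduce the corollary as a direct algebraic consequence of Theorem \ref{global_stability_inequality} combined with the sharp spinorial Sobolev inequality \eqref{spinorial-Sobolev}. Under the hypothesis $\int\<\D\psi,\psi\> > 0$, I would first divide both sides of \eqref{thm1_eq} by the positive number $\int\<\D\psi,\psi\>$; this produces exactly the left-hand side of \eqref{Stable_ineq2}, and a right-hand side equal to
\[
\frac{{\bf c}_S}{\int\<\D\psi,\psi\>}\inf_{\phi\in\mathcal{M}}\Big(\int\Abs{\D(\psi-\phi)}^{\frac{2n}{n+1}}\Big)^{\frac{n+1}{n}}.
\]
The only discrepancy with the target right-hand side is that the normalizing denominator here reads $\int\<\D\psi,\psi\>$ rather than $\bigl(\int\Abs{\D\psi}^{2n/(n+1)}\bigr)^{(n+1)/n}$.

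To close this gap I would invoke \eqref{spinorial-Sobolev} applied to $\psi$ itself, rearranged in the form
\[
\int\<\D\psi,\psi\> \;\le\; \frac{2}{n\,\omega_n^{1/n}}\Big(\int\Abs{\D\psi}^{\frac{2n}{n+1}}\Big)^{\frac{n+1}{n}},
\]
and substitute this upper bound for $\int\<\D\psi,\psi\>$ in the denominator above. Replacing the denominator by a larger quantity only decreases the right-hand side, so the resulting inequality is still valid, and it is precisely \eqref{Stable_ineq2} with the explicit constant ${\bf c}'_S = \tfrac{n}{2}\omega_n^{1/n}\,{\bf c}_S$.

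There is no substantive obstacle in this deduction: the reduction is purely algebraic given Theorem \ref{global_stability_inequality} and \eqref{spinorial-Sobolev}. The only hypotheses invoked are that $\int\<\D\psi,\psi\>>0$, which both justifies the division and is precisely the standing assumption of the corollary, and that \eqref{spinorial-Sobolev} holds for every admissible spinor rather than only for elements of $\mathcal{M}$; both are already in hand.
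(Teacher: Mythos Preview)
Your argument is correct and is precisely the natural deduction the paper has in mind; the paper itself labels the statement ``a direct corollary'' and omits the proof, so your two-line derivation---divide \eqref{thm1_eq} by $\int\<\D\psi,\psi\>>0$ and then bound that denominator from above via \eqref{spinorial-Sobolev}---is exactly what fills the gap, yielding ${\bf c}'_S=\tfrac{n}{2}\omega_n^{1/n}\,{\bf c}_S$.
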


We remark that  previous two inequalities are conformally invariant. Hence they hold  also in $\mathbb{R}^n$ with the same form in the corresponding Sobolev spaces.

\begin{theorem}
    Let $n\ge 2$. There exists a constant ${\bf c}_{S}>0$, depending only on $n$, such that for any spinor field $\psi$ on $\mathbb{R}^n$ we have
    \eq{
        \Big(\int_{\R^n}\Abs{\D\psi}^{\frac{2n}{n+1}}\Big)^{\frac{n+1}{n}}-\frac{n}{2}\omega_{n}^{1/n}\int_{\R^n}\<\D\psi,\psi\>
        \geq {\bf c}_{S}\inf_{\phi\in\mathcal{M}_\R}\Big(\int_{\R^n}\Abs{\D(\psi-\phi)}^{\frac{2n}{n+1}}\Big)^{\frac{n+1}{n}},
    }
    where $\mathcal{M}_\R$ is the set of optimizers on $\mathbb{R}^n$. Here $\mathcal{M}_{\mathbb{R}}$ is just the image of $\mathcal{M}$ under
    the stereographic projection.
\end{theorem}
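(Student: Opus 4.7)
The plan is to deduce this Euclidean statement from Theorem~\ref{global_stability_inequality} by transporting the inequality via the inverse stereographic projection, exploiting the conformal covariance of the Dirac operator and the fact that each term in \eqref{thm1_eq} has been normalised to be conformally invariant. Let $\pi\colon \S^n\setminus\{N\}\to\R^n$ be the stereographic projection, so that the two standard metrics are conformally related, say $\pi^{*}g_{\R^n}=e^{2u}g_{\mathrm{st}}$. Under the canonical spinor bundle identification for conformally related metrics, the Dirac operator transforms as
\eq{
\D_{\tilde g}\bigl(e^{-(n-1)u/2}\tilde\psi\bigr)=e^{-(n+1)u/2}\,\widetilde{\D_g\psi},
}
which is precisely what makes $\Abs{\D\psi}^{2n/(n+1)}\rd\rvol$ and $\<\D\psi,\psi\>\rd\rvol$ conformally invariant densities.

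Given a spinor $\Psi$ on $\R^n$ with the appropriate integrability, I would associate to it the spinor $\psi$ on $\S^n\setminus\{N\}$ defined by the conformal correspondence above, and then check that $\psi$ extends to a spinor field on all of $\S^n$ in the relevant Sobolev class; this is a standard point where decay of $\Psi$ at infinity matches regularity of $\psi$ at the north pole. The two conformally invariant integrals on the left-hand side then transform term-by-term,
\eq{
\int_{\S^n}\Abs{\D\psi}^{\frac{2n}{n+1}}=\int_{\R^n}\Abs{\D\Psi}^{\frac{2n}{n+1}}, \qquad \int_{\S^n}\<\D\psi,\psi\>=\int_{\R^n}\<\D\Psi,\Psi\>,
}
so the left-hand side of \eqref{thm1_eq} applied to $\psi$ coincides with the left-hand side of the desired Euclidean inequality applied to $\Psi$.

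For the right-hand side, the definition in the statement says that $\mathcal{M}_\R$ is exactly the image of $\mathcal{M}$ under this correspondence $\phi\leftrightarrow\Phi$, which is linear in the spinor argument, so $\psi-\phi\leftrightarrow \Psi-\Phi$. Applying conformal invariance to the difference gives
\eq{
\Big(\int_{\R^n}\Abs{\D(\Psi-\Phi)}^{\frac{2n}{n+1}}\Big)^{\frac{n+1}{n}}=\Big(\int_{\S^n}\Abs{\D(\psi-\phi)}^{\frac{2n}{n+1}}\Big)^{\frac{n+1}{n}},
}
so the infimum over $\Phi\in \mathcal{M}_\R$ equals the infimum over $\phi\in\mathcal{M}$. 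Combining these identities with Theorem~\ref{global_stability_inequality} yields the Euclidean inequality with the same constant $\mathbf{c}_S$. The argument requires no new analytic estimate; the only care needed is the bookkeeping verification that the spinor-bundle identification extends globally through the north pole together with a standard density argument for low-regularity $\Psi$, both of which I expect to be routine given the setup already used for Theorem~\ref{global_stability_inequality}.
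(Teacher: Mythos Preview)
Your proposal is correct and matches the paper's approach exactly. The paper does not give a separate proof of this theorem; it simply remarks that both sides of \eqref{thm1_eq} are conformally invariant and hence the inequality transfers to $\R^n$ with the same constant, which is precisely the argument you have written out in detail.
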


Theorem \ref{global_stability_inequality}
is a spinorial counterpart of the following famous stability inequality of Bianchi and Egnell \cite{BE91}: for any dimension $n\ge 3$, there
exists a constant ${\bf c}_{BE}>0$, depending only on $n$, such that
\eq{\label{BE_inequality}
\frac{\norm{\nabla u}_2^2}{\norm{u}_{\frac{2n}{n-2}}^2}-S_2^2\geq {\bf c}_{BE}\inf_{v\in{\bf M}_2}\frac{\norm{\nabla(u-v)}_2^2}{\norm{\nabla u}_2^2},\quad\forall \,u\in \dot{H}^1(\mathbb{R}^n),
}
where $S_2^2=\frac {n(n-2)} 4 \omega_n^{2\slash n}$ is the optimal Sobolev constant  and ${\bf M}_2$ is the set of optimizers of the ordinary Sobolev inequality, which was identified by Aubin \cite{Aubin76}  and Talenti \cite{Talenti76}.  This result gives a first answer to a question of Brezis and Lieb \cite{BL85}.
Since the work of  Bianchi and Egnell, there have been numerous works on the stability problems of optimal geometric inequalities. Here we just mention more recent
 works on the fractional Sobolev inequality \cite{Chen_Frank_Weth},
 on the Hardy-Littlewood-Sobolev inequalities \cite{Chen_Lu_Tang}, on the log-Sobolev inequality \cite{DEFFL},
 and, the last but not  least, on
 the isoperimetric inequality \cites{Figalli_Pr, Fusco_Maggi_Pratelli}.
See also surveys 
\cites{DE22, Figalli13, Figalli14, Frank_Survey} and references therein.

Our Theorem 1.1 is closer to the stability result for the $W^{1,p}$ Sobolev inequality with $p\in (1, n)$ proved very recently by Figalli and Zhang \cite{Figalli_Zhang_20}, where they proved 
\eq{ \label{S_Sobolev_p}
\frac{\|\nabla u\|_{L^p}}{\|u\|_{L^{p^*}}}-S_p \ge \mathbf{c}_p\inf_{v\in \mathbf{M}_p} \left(\frac{\|\nabla(u-v)\|_{L^p}}{\|\nabla u\|_{L^p}} \right)^\alpha, \quad \forall \,u \in \dot W^{1,p}(\R^n)
}
with the optimal exponent $\alpha=\max\{2, p\}$, where $p^*=np\slash (n-p)$, $S_p$ is the best constant in the $W^{1,p}$ Sobolev inequality, whose set of optimizers is denoted by $\mathbf{M}_p$. Our inequality
\eqref{Stable_ineq2} has exponent 2 for $p={2n}\slash (n+1)<2$ as in \eqref{S_Sobolev_p}, which is optimal as shown in \cite{Figalli_Zhang_20}.   The proof of \eqref{S_Sobolev_p} is much more complicated than that of \eqref{BE_inequality}, especially in the case that $p<2$. 
 The proof of  Theorem  \ref{global_stability_inequality} crucially  relies on the technique developed in \cite{Figalli_Zhang_20}.

Our main contribution is   the precise analysis on the corresponding stability operator of the spinorial Sobolev inequality.  Due to the conformal invariance of the problem, 
we only need to consider the stability operator at any given $-\frac 12 $-Killing spinor  $\xi$, or equivalently the second variation  formula of $J$ at a $-\frac 12$-Killing spinor $\xi$ in the direction  $\varphi$, which is given by
 \eq{2\omega_{n}^{\frac{1-n}{n}}S(\varphi)\coloneqq 
          2\omega_{n}^{\frac{1-n}{n}}\Bigg\{ \frac{2}{n}\int\abs{\D\varphi}^2
-\frac{4}{n(n+1)}\int\<\xi,\D\varphi\>^2 -\int\<\D\varphi,\varphi\>  
      +  \frac{n\omega_{n}^{-1}}{n+1}\left(\int\<\xi,\varphi\>\right)^2\Bigg\}.
    } 
    The main difficulty in the proof is to handle the second term  in the above second variation formula. To do it we use the eigenspinors to 
decompose $\varphi$ as usual. 
Though we know all eigenvalues of the Dirac operator and how the corresponding eigenspaces consist of (cf.  \cite{B96}), we need more  precise information about 
$\int \langle \xi, \varphi_{\pm k} \rangle\langle \xi, \varphi_{\pm j}\rangle  $, where $\varphi_{\pm k} \in E_{\pm k}$, the space of eigenspinors of the Dirac operator with eigenvalues $\frac n2 + k$ and $-(\frac n 2 +k-1)$. Precise value of  $\int \langle \xi, \varphi_{\pm k} \rangle^2$ could not be determined. Nevertheless we obtain the following optimal estimates.

\begin{proposition}\label{intro_estimate}
 \label{thm2}
    Let $\xi$ be a $-\frac 12$-Killing spinor with $\abs{\xi}=1$. For any $k\geq 1$ and any $\varphi_{\pm k}\in E_{\pm k}$,  we have
    \eq{\label{critical_estimate}
        \int\<\xi,\varphi_{k}\>^2\leq\frac{n+k-1}{n+2k-1}  \int |\varphi_k|^2, \quad
    \int\<\xi,\varphi_{-k}\>^2\leq\frac{k}{n+2k-1} \int |\varphi_{-k} |^2,
    }
 with equality in the first inequality  if and only if
    $
        \varphi_{k}={(n+k-1)f_k \xi +\rd f_{k}\cdot\xi}
    $
    and equality in the second  if and only if
    $
        \varphi_{-k}={-kf_{k}\xi+\rd f_{k}\cdot\xi},
    $
    where $f_{k} \in P_k$ is an eigenfunction of $-\Delta$ with eigenvalue $k(n+k-1)$, a spherical harmonic. Moreover, 
    \eq{\label{decomposition}
    \int \langle \xi, \varphi_{\pm k} \rangle \langle \xi, \varphi_{\pm j} \rangle =0, \quad    \forall \,\varphi_{\pm k} \in E_{\pm k}, \,\varphi_{\pm j} \in E_{\pm j}, \,k\not =j. 
    }
\end{proposition}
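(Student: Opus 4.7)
The plan is to reduce the whole proposition to a spherical-harmonic decomposition of the scalar function $f:=\<\xi,\varphi\>$ via two explicit lifting maps $T_k^{\pm}:P_k\to\Gamma(\Sigma\S^n)$, followed by a single Cauchy--Schwarz step. First I would establish, using only the Killing equation $\nabla_X\xi=-\tfrac12 X\cdot\xi$ and $\Abs{\xi}=1$, the Dirac product formulas
\[
\D(g\xi)=\nabla g\cdot\xi+\tfrac n2\,g\,\xi,\qquad
\D(\nabla g\cdot\xi)=-\Delta g\,\xi-\tfrac{n-2}{2}\nabla g\cdot\xi,
\]
valid for every $g\in C^\infty(\S^n)$. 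Specializing to $g\in P_k$ so that $-\Delta g=k(n+k-1)g$, a direct calculation then shows
\[
T_k^+(g):=(n+k-1)g\,\xi+\nabla g\cdot\xi\in E_k,\qquad
T_k^-(g):=-k\,g\,\xi+\nabla g\cdot\xi\in E_{-k},
\]
and taking the difference yields the crucial identity
\[
g\,\xi=\tfrac{1}{n+2k-1}\bigl(T_k^+(g)-T_k^-(g)\bigr),\qquad g\in P_k.
\]
In words: whenever $g$ is a spherical harmonic of degree $k$, the spinor $g\,\xi$ lies in $E_k\oplus E_{-k}$ with this explicit decomposition into eigenspinors.

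The next step is to pin down the spherical-harmonic content of $f=\<\xi,\varphi\>$ for $\varphi\in E_k$. Writing $f=\sum_{j\geq 0}f_j$ with $f_j\in P_j$, for each $j$
\[
\int f_j^2=\int f_j\,f=\<f_j\,\xi,\varphi\>_{L^2}=\tfrac{1}{n+2j-1}\<T_j^+(f_j)-T_j^-(f_j),\varphi\>_{L^2},
\]
which vanishes for $j\neq k$ by the $L^2$-orthogonality of distinct $\D$-eigenspaces. Hence $f\in P_k$, which immediately proves the orthogonality statement \eqref{decomposition}, since $\<\xi,\varphi_{\pm k}\>\in P_k$ and $\<\xi,\varphi_{\pm j}\>\in P_j$ are $L^2$-orthogonal for $k\neq j$.

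For the sharp bound, a pointwise expansion of $\Abs{T_k^+(f)}^2$ (the cross term vanishes thanks to the skew-symmetry of Clifford multiplication against $\xi$) followed by the Poincar\'e identity $\int\Abs{\nabla f}^2=k(n+k-1)\int f^2$ on $P_k$ gives $\|T_k^+(f)\|_{L^2}^2=(n+k-1)(n+2k-1)\|f\|_{L^2}^2$. Since $f\in P_k$ and $\varphi\in E_k$, the key identity produces
\[
\|f\|_{L^2}^2=\<f\xi,\varphi\>_{L^2}=\tfrac{1}{n+2k-1}\<T_k^+(f),\varphi\>_{L^2}\leq\tfrac{1}{n+2k-1}\|T_k^+(f)\|_{L^2}\|\varphi\|_{L^2}
\]
by Cauchy--Schwarz, which rearranges to $\|f\|_{L^2}^2\leq\tfrac{n+k-1}{n+2k-1}\|\varphi\|_{L^2}^2$, the first inequality in \eqref{critical_estimate}. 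Equality forces $\varphi\propto T_k^+(f)$, and matching the prefactor yields the stated form $\varphi_k=(n+k-1)f_k\,\xi+\rd f_k\cdot\xi$; the $E_{-k}$ case is verbatim upon swapping $T_k^+$ with $T_k^-$. The conceptual crux, and essentially the only step demanding insight beyond Killing/eigenvalue bookkeeping, is establishing the decomposition $g\xi=(n+2k-1)^{-1}(T_k^+(g)-T_k^-(g))$ for $g\in P_k$; without it one recovers neither $f\in P_k$ nor the sharp constant $\sqrt{(n+k-1)/(n+2k-1)}$.
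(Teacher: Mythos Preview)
Your proof is correct and takes a genuinely different, more intrinsic route than the paper. The paper first shows $\langle\xi,\varphi_{\pm k}\rangle\in P_k$ by computing $-\Delta\langle\chi,\rd f\cdot\xi\rangle$ directly via the Schr\"odinger--Lichnerowicz formula (their Proposition~3.2 and Corollary~3.3), and then proves the sharp inequality by choosing orthonormal bases $\{\xi_\alpha\}$ of $E_0$ and $\{h_i\}$ of $P_k$, expanding $\varphi_k=\sum c_{i,\alpha}h_i\xi_\alpha$, invoking the coefficient identity $\varphi_k=\tfrac1k\sum c_{i,\alpha}\rd h_i\cdot\xi_\alpha$ (their Lemma~3.4), and applying Cauchy--Schwarz inside that coordinate expansion. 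Your argument bypasses all coordinates: the single identity $g\xi=(n+2k-1)^{-1}\bigl(T_k^+(g)-T_k^-(g)\bigr)$ simultaneously yields $\langle\xi,\varphi\rangle\in P_k$ (hence the orthogonality \eqref{decomposition}) and sets up a one-line Cauchy--Schwarz against $T_k^\pm(f)$ with the correct sharp constant. Your approach is shorter and basis-free; the paper's more explicit coefficient machinery, on the other hand, is reused later in their Section~5 (e.g.\ Proposition~5.1), where knowing the individual $c_{i,\alpha}$ matters.
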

Such estimates are not required  in the proof of the stability of the scalar Sobolev inequalities mentioned above. 
Proposition \ref{thm2}, especially the estimate \eqref{critical_estimate}, is crucial in the paper and has its own interest. See another application in the second spinorial Sobolev inequality \eqref{2nd_Sobolev} later.

Now we briefly sketch the idea of proof. 
First we decompose the space of all spinor fields into $F_0\oplus F_1 \oplus F_2 \cdots $ with $F_k=E_k\oplus E_{-k} $ (for $k\ge 1$), where $F_0=E_0$ is the space of all $-\frac 12$-Killing spinors. \eqref{decomposition} implies that $S$ can be split into a direct sum  of $S|_{F_k}$ ($k=0, 1,2,\dots$). Hence we only need to consider $S|_{F_k}$ individually. For $k\ge 3$, using the Cauchy-Schwarz inequality to bound the term $\int \< \xi , \D\varphi\>^2$ is enough to show that there exists a positive constant $c(n)$ independent of $k$ such that
$S|_{F_k}(\varphi) \ge c(n) \int |\D \varphi|^2 $. For $k=2$ we need \eqref{critical_estimate}.  For $k=1$  we need the optimal case of \eqref{critical_estimate} to 
prove that 
$S|_{F_1} (\varphi) \ge 0$ with equality if and only if $\varphi$ is proportional to  $(n-1)f\xi+df\cdot \xi$ with $f$ a first eigenfunction of $-\Delta$, which belongs to $Q_\xi$ and hence to $T_\xi \mathcal{M}$, since  
\[T_\xi \mathcal{M}= E_0\oplus Q_\xi, \qquad Q_\xi\coloneqq 
\{ (n-1)f\xi+df\cdot \xi \,|\, -\Delta f=nf \}\subset E_1\oplus E_{-1}.
\] 
Now together with the technique developed in \cite{Figalli_Zhang_20} mentioned above we can show 
the local stability result, Theorem \ref{local_stability_inequality}. The global stability, Theorem \ref{global_stability_inequality} follows then from a contradiction argument, which is more or less standard now due to  the conformal invariance of all integrals in  \eqref{thm1_eq}.

Since the proof uses a contradiction argument, the constant $\bf{c}_S$ in Theorem \ref{global_stability_inequality} can not be estimated explicitly, the same  as in many stability results. However, we expect that there is a
sharp quantitative version for the stability of the spinorial Sobolev inequality with an explicit constant,
as \cite{DEFFL}  for inequality \eqref{BE_inequality}.

As an application of our argument, we consider another spinorial Sobolev inequality
\eq{ \label{2nd_Sobolev} \frac{\|\D \varphi\|_{\frac {2n}{n+1}}^2  }{\|\varphi\|_{\frac {2n}{n-1}}^2} \ge C_2, \quad \forall \,\varphi\not\equiv 0.
}
The validity of this inequality with a positive constant  $C_2>0$ can be shown by the Hardy-Littlewood-Sobolev inequality  (see for instance \cite{LL01}*{Theorem 4.3}). 
It is an interesting question to determine the best constant  $C_2$. The inequality is also  conformally invariant and moreover 
it is not difficult to check that all elements in $\mathcal M$ (in fact a larger set) are critical points of the corresponding functional, see Section 5 below.  Therefore, it is natural to conjecture that they are optimizers, see \cites{FL1, FL2, FL3}. 
 If it were true, then the best constant $C_2=\frac{n^2}{4} \omega_n^{2/n}$. Inequality \eqref{2nd_Sobolev} relates other interesting Sobolev-type inequalities, see \cites{FL1, FL2, FL3}. 
For previous related work see \cite{LY86}. 
 Unfortunately, this is not true, see examples in Section 5 below.
As a by-product of Proposition \ref{thm2} presented above, we prove in fact

\begin{theorem}\label{thm3}
    Any element in $\mathcal M$ has index $n+1$ and nullity $2^{[\frac n2]+2}$.
   \end{theorem}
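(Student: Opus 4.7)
The plan is to compute the second variation of the Sobolev quotient in \eqref{2nd_Sobolev},
\[
F(\varphi)\coloneqq\frac{\Big(\int|\D\varphi|^{2n/(n+1)}\Big)^{(n+1)/n}}{\Big(\int|\varphi|^{2n/(n-1)}\Big)^{(n-1)/n}},
\]
at a critical point $\xi\in\mathcal M$, and then to identify the signature and kernel of the resulting quadratic form $\tilde S$. By the conformal invariance of $F$ I may take $\xi$ to be a $-\tfrac12$-Killing spinor with $|\xi|\equiv 1$, so that $\D\xi=\tfrac n2\xi$ and $|\D\xi|$ is constant on $\S^n$. Expanding $F(\xi+t\eta)$ to second order and using the self-adjointness of $\D$, the first-order term cancels and one obtains an explicit quadratic form $\tilde S(\eta)$ built out of the six integrals $\int|\D\eta|^2,\int|\eta|^2,\int\langle\xi,\D\eta\rangle^2,\int\langle\xi,\eta\rangle^2,(\int\langle\xi,\D\eta\rangle)^2$ and $(\int\langle\xi,\eta\rangle)^2$, with coefficients that are explicit rational functions of $n$.

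Decomposing $\eta=\sum_{k\ge 0}\eta_k$ with $\eta_k\in F_k=E_k\oplus E_{-k}$, identity \eqref{decomposition} kills all cross integrals between distinct eigenspaces, and $L^2$-orthogonality kills the ``global'' squares $(\int\langle\xi,\eta_k\rangle)^2$ for $k\ge 1$, so $\tilde S(\eta)=\sum_k\tilde S_k(\eta_k)$ splits as a direct sum. On $F_0=E_0$ any linear combination of $-\tfrac12$-Killing spinors is again Killing with constant norm, hence $F$ is constant on $E_0$ and $\tilde S_0\equiv 0$, contributing $\dim_{\R}E_0$ null directions. On $F_k$ for $k\ge 2$, the mass coefficients $k(n+k)$ and $(k-1)(n+k-1)$ of $\int|\eta_\pm|^2$ are strictly positive; bounding the remaining negative contributions via the sharp estimate \eqref{critical_estimate} together with a Cauchy--Schwarz on the cross integral $\int\langle\xi,\eta_+\rangle\langle\xi,\eta_-\rangle$, one checks that $\tilde S_k$ is positive definite on $F_k$.

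The decisive case is $F_1$. The equality case of Proposition~\ref{intro_estimate} identifies the $(n+1)$-dimensional subspace $W^+=\{nf\xi+df\cdot\xi:f\in P_1\}\subset E_1$ on which $\int\langle\xi,\eta_+\rangle^2=\tfrac{n}{n+1}\int|\eta_+|^2$, and the $(n+1)$-dimensional subspace $W^-=\{-f\xi+df\cdot\xi:f\in P_1\}\subset E_{-1}$ on which $\int\langle\xi,\eta_-\rangle^2=\tfrac{1}{n+1}\int|\eta_-|^2$; on the $L^2$-complements $\ker(T_\pm)\subset E_{\pm 1}$ the function $\langle\xi,\cdot\rangle$ vanishes identically. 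Choosing an $L^2$-orthonormal basis $\{f_i\}_{i=1}^{n+1}$ of $P_1$ and writing $\eta_+=\sum_i\alpha_i(nf_i\xi+df_i\cdot\xi)$ and $\eta_-=\sum_i\beta_i(-f_i\xi+df_i\cdot\xi)$, a direct substitution collapses $\tilde S_1|_{W^+\oplus W^-}$ into
\[
-\frac{n\,\omega_n^{1/n}}{n^2-1}\sum_{i=1}^{n+1}(\alpha_i-n\beta_i)^2.
\]
For each $i$ the direction $\alpha_i=n\beta_i$ is proportional to $(n-1)f_i\xi+df_i\cdot\xi\in Q_\xi\subset T_\xi\mathcal M$ and is null, while the transverse direction in the slice is strictly negative, producing $n+1$ negative and $n+1$ null directions. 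On $\ker(T_+)$ the form reduces to $(n+1)\int|\eta_+|^2>0$; on $\ker(T_-)$ the mass coefficient $(k-1)(n+k-1)\big|_{k=1}$ and $\langle\xi,\eta_-\rangle$ both vanish, so $\tilde S_1\equiv 0$ there, and since $\eta_-\mapsto-\langle\xi,\eta_-\rangle\colon E_{-1}\to P_1$ is surjective, $\dim\ker(T_-)=\dim_{\R}E_{-1}-(n+1)$. Summing, the index equals $n+1$ and the nullity equals $\dim_{\R}E_0+(n+1)+(\dim_{\R}E_{-1}-(n+1))=2\dim_{\R}E_0=2^{[n/2]+2}$. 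The main technical obstacle is the positivity on $F_k$ for $k\ge 2$, which requires a delicate balancing of \eqref{critical_estimate} with Cauchy--Schwarz on the cross integral; the case $k=2$ is the tightest, while $k\ge 3$ follows from a cruder direct estimate once the positive $k(n+k),(k-1)(n+k-1)$ coefficients dominate.
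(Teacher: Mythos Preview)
Your proposal is correct and follows essentially the same strategy as the paper. Both proofs compute the second variation of $F$ at a $-\tfrac12$-Killing spinor, split it over the blocks $F_k$ via \eqref{decomposition}, use Proposition~\ref{intro_estimate} for $k=1,2$ and a crude Cauchy--Schwarz for $k\ge 3$, and identify $E_0\oplus Q_\xi$ together with the part of $E_{-1}$ on which $\langle\xi,\cdot\rangle$ vanishes as the null space, with an $(n+1)$-dimensional negative subspace in $F_1$. The only differences are notational: your $W^\pm=\{nf\xi+df\cdot\xi\},\{-f\xi+df\cdot\xi\}$ together span the paper's $Q\oplus Q_-$, your $\ker(T_\pm)$ are the paper's $\tilde E_{\pm 1}$, and your diagonalization $\sum_i(\alpha_i-n\beta_i)^2$ is exactly the paper's perfect square $\int(\langle\xi,\varphi_1\rangle+n^2\langle\xi,\varphi_{-1}\rangle)^2$ in coordinates.
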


It remains as an interesting open problem  to find the best constant $C_2$. We remark that \eqref{2nd_Sobolev} admits optimizers, which was proved in \cite{FL3}. It sounds to be difficult to classify them. This result leads to consider a family of conformally invariant functionals in Appendix B.

Theorem \ref{global_stability_inequality} is a stability theorem for 
\eqref{spinorial-Sobolev}, in other words, a stability result for the spinorial Yamabe constant of the standard sphere $Y_s(\mathbb{S}^n,[g_{{\rm st}}])$.
It would be interesting to ask if  such a stability result also holds for the spinorial Yamabe constant for a general spin structure, $Y_s(M,[g],\sigma)$, whose counterpart for the ordinary Yamabe problem was proved in \cite{Engelstein_Neumayer_Spolaor_22}. Moreover we also ask if degenerate stability occurs for $Y_s$ as in \cite{Frank_22_degenerate_st}.

Analysis on spinor fields attracts recently more attention of mathematicians. Except the work cited above, we mention further some related results \cites{BG92, JWZ07, CJLW06, BO22, CJSZ18, AWW16, Malchiodi, Isobe_13, Reuss25}. 
\

\noindent{\it The rest of the paper is organized as follows.}
In Section 2 we provide preliminaries about the Dirac operators, Killing spinors and the B\"ar-Hijazi-Lott invariant. In Section 3 we refine the properties of eigenspinors and prove Proposition \ref{thm2}. The local stability result, and then the global stability result, Theorem \ref{global_stability_inequality}, will be proved in Section 4. In Section 5, we first provide examples to show that elements in $\mathcal M$ are not optimizers and then prove Theorem \ref{thm3}.  In Appendix A, we give the complete proof of local stability by following closely \cite{Figalli_Zhang_20}. In Appendix B, we discuss a further functional $J_a$ which relates our first and second Sobolev inequalities. In Appendix C, we give the explicit form  of each element in $\mathcal M$ and its conformally equivalent form in $\mathbb{R}^n$.

\section{Preliminaries}

\subsection{Basic properties of spinor fields and the Dirac operator}

In this subsection we recall some basics about spinor fields and the Dirac operator. For general information about spin geometry and the Dirac operator, we refer to \cites{Lawson_Book, Baum_Book_90, Friedrich_Book00, Ginoux09}.

Let $M$ be an orientable Riemannian manifold of dimension $n\geq 2$. Over $M$ one can define a ${\rm SO}(n)$-principle bundle $P_{{\rm SO}(n)}M$ with fibres being oriented orthonormal bases. We call $M$ a spin manifold if $P_{{\rm SO}(n)}M$ can be two-fold lifted up to $P_{{\rm Spin}(n)}M$, where the Lie group ${\rm Spin}(n)$ is the simply-connected two-fold cover of ${\rm SO}(n)$. The cover $\sigma: P_{{\rm Spin}(n)}M \ra P_{{\rm SO}(n)}M$ is called a spin structure. We only consider spin manifolds in this paper. It is well known that $M$ is spin if and only if the second Stiefel-Whitney class of $M$ vanishes. In particular, $\S^n$ is a spin manifold.

We denote by $\Sigma M$ the associated complex vector bundle of the principle bundle $P_{{\rm Spin}(n)}M$, which has complex rank $2^{[\frac n2]}$. The Riemannian metric $g$ on $M$ endows a canonical Hermitian metric on $\Sigma M$ and the associated spin connection. We denote by $\<\cdot,\cdot\>$ the real part of the Hermitian metric and by $\nabla$ the spin connection, if there is no confusion. A section of $\Sigma M$ is called a \textit{spinor field}, often denoted by $\psi,\xi$, etc. Tangent vectors act on spinor fields by $\gamma: TM \ra {\rm End}_{\mathbb{C}}(\Sigma M)$. For short we use the notation $X\cdot\psi\coloneqq \gamma(X)(\psi)$. The action is anti-symmetric with respect to $\<\cdot,\cdot\>$ and obeys the so-called Clifford multiplication rule $X\cdot Y\cdot \psi + Y\cdot X\cdot \psi = -2 g(X,Y)\psi$.

Let $\{e_{i}\}_{i=1}^{n}$ be an orthonormal frame of $M$. The Dirac operator $\D:\Gamma(\Sigma M) \ra \Gamma(\Sigma M)$ is locally defined by
\eq{
    \D\psi \coloneqq \sum_{i=1}^{n} e_{i}\cdot \nabla_{e_{i}}\psi, \quad\forall \,\psi\in\Gamma(\Sigma M).
}
It is well known that $\D$ is a first-order self-adjoint elliptic operator, which plays the role as ``square root'' of Laplacian through the famous Schr\"odinger-Lichnerowicz formula 
\eq{\label{Bochner}
    \D^2 = -\Delta + \frac{{\rm R}}{4},
}
where ${\rm R}$ is the scalar curvature. A class of special spinor fields, \textit{Killing spinors}, is defined by the following equation
\eq{
    \nabla_{X}\psi = \alpha X\cdot \psi, \quad \forall \,X\in\Gamma(TM),
}
where $\alpha\in\mathbb{C}$ is constant and called the Killing-number. If $\psi$ is an $\alpha$-Killing spinor, then a direct consequence is that $\psi$ must be an eigenspinor of Dirac operator with respect to eigenvalue $-n\alpha$, since in this case
\eq{
    \D\psi = \sum_{i=1}^{n} e_{i}\cdot \nabla_{e_{i}}\psi = \alpha\sum_{i=1}^{n} e_{i}\cdot e_{i}\cdot\psi = -n\alpha\psi.
}
Existence of a non-zero Killing spinor is a demanding requirement of manifold. We refer to \cite{BFGK91} for more details. In particular, the standard sphere $\S^n$ carries $\pm\frac{1}{2}$-Killing spinors, which are $\mp\frac{n}{2}$-eigenspinors of Dirac operator.

Let $\Tilde{g}=u^2 g$ be a conformal metric for some function $u$. The isometry $(TM,g) \ra (TM,\Tilde{g})$ given by $X \mapsto u^{-1}X$ induces an isomorphism $(\Sigma M, g) \ra (\Sigma M, \Tilde{g})$ given by $\psi \mapsto \Tilde{\psi}$. The following conformal transformation formula of Dirac operator is well known (see for instance \cite{Ginoux09})
\eq{\label{conformal_Dirac}
    \D_{\Tilde{g}}(u^{-\frac{n-1}{2}}\Tilde{\psi}) = u^{-\frac{n+1}{2}}\widetilde{\D\psi}.
}
Using \eqref{conformal_Dirac} one can see that $J(\psi,g)$ is scaling-invariant and is conformally invariant in the following sense
\eq{
    J(u^{-\frac{n-1}{2}}\Tilde{\psi},\Tilde{g})=J(\psi,g).
}
Moreover, 
\[
\int _M |\D \psi|^{\frac {2n}{n+1}}\rd\rvol_{g}, \quad \int_M|\psi|^{\frac {2n}{n-1}} \rd\rvol_{g},\quad \int_M\<\D \psi, \psi \>
\rd\rvol_{g}\]
are all conformally invariant in the above sense. All conformal transformations in the paper, except in Section 5, are orientation preserving.
The Euler-Lagrange equation of $J(\psi,g)$ is 
\eq{\label{Spin_Yamabe}
    \D_{g}\psi=
   \mu \abs{\psi}_{g}^{\frac{2}{n-1}}\psi
}
for some constant $\mu >0$, which is known as the spinorial Yamabe equation. For the related work on the spinorial Yamabe problem, we refer to \cites{SX21, Isobe_Sire_Xu ,IX23}.
When $(M,g)=(\S^n,g_{{\rm st}})$, as an optimizer of $J$, each element in $\mathcal M$ clearly satisfies \eqref{Spin_Yamabe}. However on $\S^n$ \eqref{Spin_Yamabe} admits other solutions. We would like also to mention that ground state solutions of a critical Dirac equation
of a very closely related functional consist of exactly elements in $\mathcal M$, proved in \cite{Malchiodi}.

From above one can clearly see that the suitable working space for above functionals is the $W^{1,\frac{2n}{n+1}}$. The paper works on this space and will not mention it explicitly for sake of simplicity.

\subsection{Eigenvalues of the Dirac operator} The Schr\"odinger-Lichnerowicz formula \eqref{Bochner} plays an important role in differential geometry, especially in the study  of the existence of manifolds of positive scalar curvature. On a manifold $(M, g)$ of positive scalar curvature, it directly implies that  any eigenvalue $\lambda$ of the Dirac operator satisfies $\lambda^2 \ge \frac 14 {\rm min }_{M} {\rm R}.$  By using the twistor operator Friedrich \cite{Friedrich} improved it to
\[
\lambda^2 \ge \frac {n}{4(n-1)} {\rm min }_{M} {\rm R}_g,
\]
which is optimal, since at least on $\S^n$ equality is achieved. By using conformal transformations, it was improved further in \cite{H86} to the Hijazi inequality ($n\ge 3$)
\eq{\lambda^2 \ge \inf _{u\neq 0}\frac {\int_M(\frac n{n-2}|\nabla u|^2 +\frac n{4(n-1)} {\rm R}_gu^2)}{(\int_M |u|^{\frac {2n}{n-2}})^{\frac {n-2}n}},
}
where the right-hand side is the ordinary Yamabe constant. On $\S^n$
it gives $\lambda^+_{\rm min} (\S^n) \ge \frac n 2 \omega _n^{1\slash n},$ which, together with the existence of $-\frac 12$-Killing spinor on $\S^n$, implies that
 \[\lambda^+_{\rm min} (\S^n) = \frac n 2 \omega _n^{1\slash n},\]
the spinorial Sobolev inequality. Equality was classified by Ammann in \cite{A03}. See also a related work in \cite{Malchiodi}. When $n=2$ it follows from  the B\"ar's inequality
\eq{
\label{Baer} \lambda ^2 (g) \rVol(g) \ge 2\pi \chi (M^2),
}
which was generalized in \cite{Wang} to $4$-dimensional manifolds  in terms of the total $\sigma_2$ scalar curvature, which is the same as the total $Q$ curvature in the $4$-dimensional case.

\subsection{Eigenspinors on \texorpdfstring{$\S^n$}{Sn}}

From now on, we focus on the standard sphere. 
The eigenvalues of the Dirac operator was first computed by Sulanke in her unpublished thesis  \cite{Sulanke}. In this paper we follow closely the work of B\"ar
 \cite{B96}, where he used crucially Killing spinors, which trivialize the spinor bundle on the sphere and make   the computation doable. 
  His method also  implies the classification of eigenspinors. For the reader's convenience, we state the result and give a complete proof for classification here, since this builds a background for computation and estimation of spinors in this paper.

Let $E_0$ be the space of all $-\frac 12$-Killing spinors, which has complex dimension $2^{[\frac{n}{2}]}$.
We choose $E_0$ to trivialize the spinor bundle. Since $-\frac 12$-Killing spinors play special role in this paper, we use $\xi, \chi, \eta$ to denote them, and use $\varphi, \phi, \psi$ to denote  general spinor fields on $\S^n$. It is known that  elements in $E_0$ are exactly $\frac n 2$-eigenspinors of the Dirac operator. 
Let $\xi_1, \cdots, \xi_{2^{[n\slash 2]}}$ be a trivialization of $E_0$ and let $f_0 \equiv 1, f_1, f_2, \cdots$ be an orthogonal basis of $L^2$-functions on $\S^n$, i.e. $L^2 (\S^n, \R)$. Then $f_i\xi_\alpha$ build a basis of the $L^2$-spinor fields, i.e. $L^2(\S, \Sigma\S^n)$. The orthogonal basis of $L^2(\S^n, \R)$ can be chosen by using the eigenfunctions of $-\Delta$. 
Let $P_k$ ($k\ge 1$) be  the space of eigenfunctions of $-\Delta$ with  eigenvalue  $k(n+k-1)$, i.e.,  the space of spherical harmonics of degree $k$. $P_k$ has dimension $\binom{n+k-1}{k}\frac{n+2k-1}{n+k-1}$. We know that the eigenvalues of the Dirac operator $\D$ consist of $\{\pm \frac n 2, \pm (\frac n 2 +k), k  \ge 1\} $.   For $k\ge 1$ let $E_k$ be the space of eigenfunctions with eigenvalue $\frac n 2 +k$ and let $E_{-k}$ be the space of eigenfunctions with eigenvalue $-(\frac n 2 +k-1)$. One can check that $E_{-1}$ is exactly the space of $\frac 12$-Killing spinors, which is just treated as the space of eigenfunctions with eigenvalue $-\frac n 2 $. 
Now we collect important information about eigenspinors in the following proposition.

\begin{proposition}
\label{classification}
Let $E_k$ be defined as above. Then $E_0$ is the space of $-\frac 12$-Killing spinors of complex dimension $2^{[\frac{n}{2}]}$ and for $k\ge 1$
\eq{
        {\rm dim}_{\mathbb{C}}E_{k}=2^{[\frac{n}{2}]}\binom{n+k-1}{k},\quad {\rm dim}_{\mathbb{C}}E_{-k}=2^{[\frac{n}{2}]}\binom{n+k-2}{k-1}.
    }
    Moreover,
    we have
    \eq{         E_{k}&={\rm span}_{\mathbb{C}}\left\{(n+k-1)f \xi +\rd f \cdot \xi \,|\, 
    f\in P_k,  \xi \in E_0
    \right\},\\  
     E_{-k}&={\rm span}_{\mathbb{C}}\left\{-k f \xi +\rd f \cdot \xi \,|\, 
    f\in P_k, \xi \in E_0
    \right\}.  }  
 
\end{proposition}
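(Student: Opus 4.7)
The plan is to trivialize $\Sigma\mathbb{S}^{n}$ using the Killing spinor space $E_{0}$ and reduce $\D$ to a $2{\times}2$ matrix computation on each two-dimensional block $\operatorname{span}_{\mathbb{C}}\{f\xi,\,\rd f\cdot\xi\}$ indexed by $f\in P_{k}$ and $\xi\in E_{0}$. I would first fix such $\xi$ (with $|\xi|$ constant by the Killing equation) and a smooth $f$, and from $\nabla_{X}\xi=-\tfrac{1}{2}X\cdot\xi$ together with $\sum_{i}e_{i}\cdot e_{i}=-n$ derive the basic identity
\[
\D(f\xi)=\rd f\cdot\xi+\tfrac{n}{2}f\xi.
\]
For the companion formula I would compute $\D^{2}(f\xi)$ in two ways: once by applying $\D$ to the preceding identity, and once by the Schr\"odinger--Lichnerowicz formula \eqref{Bochner}, in which $\Delta(f\xi)$ unravels via a second use of the Killing equation to $(\Delta f)\xi-\rd f\cdot\xi-\tfrac{n}{4}f\xi$. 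Specializing to $f\in P_{k}$ so that $-\Delta f=k(n+k-1)f$ and equating the two expressions isolates
\[
\D(\rd f\cdot\xi)=k(n+k-1)f\xi-\Bigl(\tfrac{n}{2}-1\Bigr)\rd f\cdot\xi.
\]

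These two identities show that $\D$ preserves the block $\operatorname{span}_{\mathbb{C}}\{f\xi,\rd f\cdot\xi\}$ and acts on it by the matrix $\bigl(\begin{smallmatrix}n/2 & k(n+k-1)\\ 1 & -(n/2-1)\end{smallmatrix}\bigr)$; its characteristic equation reduces to $(2\lambda-1)^{2}=(n+2k-1)^{2}$, producing the desired eigenvalues $\tfrac{n}{2}+k$ and $-(\tfrac{n}{2}+k-1)$ with eigenvectors $\Phi^{+}(f,\xi)\coloneqq(n+k-1)f\xi+\rd f\cdot\xi$ and $\Phi^{-}(f,\xi)\coloneqq-kf\xi+\rd f\cdot\xi$, giving the inclusions $\Phi^{\pm}(f,\xi)\in E_{\pm k}$. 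For spanning I would use a basis $\{\xi_{\alpha}\}_{\alpha=1}^{2^{[n/2]}}$ of $E_{0}$ as a global frame of $\Sigma\mathbb{S}^{n}$ together with the Fourier decomposition $L^{2}(\mathbb{S}^{n},\mathbb{C})=\bigoplus_{k\geq 0}P_{k}$ to obtain the $L^{2}$-orthogonal splitting $L^{2}(\Sigma\mathbb{S}^{n})=\bigoplus_{k\geq 0}V_{k}$ with $V_{k}\coloneqq P_{k}\otimes E_{0}$. The block analysis forces $V_{k}\subseteq E_{k}\oplus E_{-k}$, and summing over $k$ while comparing with the spectral decomposition of the self-adjoint $\D$ promotes this to equality $V_{k}=E_{k}\oplus E_{-k}$. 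The extended map $(\Phi^{+},\Phi^{-})\colon P_{k}\otimes E_{0}\to V_{k}$ is injective since $\Phi^{+}-\Phi^{-}$ is $(n+2k-1)$ times the trivialization isomorphism, and combined with $E_{k}\perp E_{-k}$ in $L^{2}$ this forces $\operatorname{Image}(\Phi^{\pm})=E_{\pm k}$, which is the explicit classification claimed.

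The trivialization immediately yields $\dim V_{k}=2^{[n/2]}\dim P_{k}$, consistent with the claimed $\dim E_{k}+\dim E_{-k}$ via the identity $\binom{n+k-1}{k}+\binom{n+k-2}{k-1}=\dim P_{k}$. The main obstacle is that this $E_{0}$-trivialization argument is intrinsically symmetric between the two eigenspaces and only yields their combined dimension; the individual formulas $\dim E_{k}=2^{[n/2]}\binom{n+k-1}{k}$ and $\dim E_{-k}=2^{[n/2]}\binom{n+k-2}{k-1}$ require additional representation-theoretic input, obtained for instance by decomposing each $V_{k}$ as a $\operatorname{Spin}(n+1)$-representation via Frobenius reciprocity or by Sulanke's direct harmonic-analytic computation, for which I would cite \cite{B96}.
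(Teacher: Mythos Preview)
Your proof is correct and follows the same overall strategy as the paper: trivialize $\Sigma\mathbb{S}^n$ by an orthonormal basis of $E_0$, compute the action of $\D$ on each two-dimensional block $\operatorname{span}_{\mathbb{C}}\{f\xi,\,\rd f\cdot\xi\}$, and identify the eigenvectors $\Phi^{\pm}$. Two minor differences are worth noting. First, you derive $\D(\rd f\cdot\xi)$ via Schr\"odinger--Lichnerowicz applied to $f\xi$, whereas the paper uses the general identity $\D(X\cdot\psi)=-X\cdot\D\psi-2\nabla_X\psi+e_i\cdot\nabla_{e_i}X\cdot\psi$; both yield the same formula. Second, and more interestingly, your argument for the span characterization is logically independent of the individual dimension formulas: you first obtain $P_k\otimes E_0=E_k\oplus E_{-k}$ directly from the $L^2$ spectral decomposition of $\D$, and then read off $\operatorname{Image}(\Phi^{\pm})=E_{\pm k}$ by orthogonal projection. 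The paper instead invokes the dimension counts from \cite{B96} to squeeze the inclusions $V_{\pm k}\subseteq E_{\pm k}$ into equalities. Your route is slightly cleaner in that the explicit description of $E_{\pm k}$ does not rely on the external dimension input; both proofs, of course, still cite \cite{B96} for the dimension formulas themselves.
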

\begin{proof} The dimension counting was proved in \cite{B96}.
    The characterization of $E_{0}$ is trivial. Moreover, if we choose an orthonormal basis $\{\xi_{\alpha}:1\leq\alpha\leq2^{[\frac{n}{2}]}\}$ of $E_0$, then it forms a trivialization of spinor bundle $\Sigma\S^n$. So it suffices to classify the other eigenspinors. For short we denote
    \eq{
        V_{k}&\coloneqq{\rm span}_{\mathbb{C}}\{(n+k-1)f\xi_{\alpha}+\rd f\cdot\xi_{\alpha} \,|\, 
        f\in P_k, 1\leq\alpha\leq2^{[\frac{n}{2}]}\},\\
        V_{-k}&\coloneqq{\rm span}_{\mathbb{C}}\{-kf\xi_{\alpha}+\rd f\cdot\xi_{\alpha} \,|\, 
        f\in P_k, 1\leq\alpha\leq2^{[\frac{n}{2}]}\}.
    }
    
    It is well known that the spectrum of $-\Delta$ on standard $\S^n$ is $\{k(n+k-1):k\geq0\}$ with multiplicity $m(k(n+k-1))=\binom{n+k-1}{k}\frac{n+2k-1}{n+k-1}$. Using the formula (see for instance \cite{BFGK91})
    \eq{
        \D(X\cdot\psi)=-X\cdot\D\psi-2\nabla_{X}\psi+e_{i}\cdot\nabla_{e_{i}}X\cdot\psi
    }
    we deduce for any $h\in C^{\infty}(\S^n)$ and $\xi\in P_{1}$
    \eq{
        \D(\rd h\cdot\xi)=(-\Delta h)\xi-\frac{n-2}{2}\rd h\cdot\xi. \label{Dirac_operator}
    }
    Now it is easy to check every $(n+k-1)f\xi_{\alpha}+\rd f\cdot\xi_{\alpha} \in E_{k}$ and every $-kf\xi_{\alpha}+\rd f\cdot\xi_{\alpha} \in E_{-k}$. Hence
    \eq{
        V_{k}\subset E_{k},\quad V_{-k}\subset E_{-k}.
    }
    On the other hand, 
     \eq{
        V_{k}\oplus V_{-k}={\rm span}_{\mathbb{C}}\{f\xi_{\alpha}, \rd f\cdot\xi_{\alpha} \,|\, 
        f\in P_k, 1\leq\alpha\leq2^{[\frac{n}{2}]}\} }
    and \cite{B96}*{Section 2} computed by induction that
     \eq{
        {\rm dim}_{\mathbb{C}}E_{k}=2^{[\frac{n}{2}]}\binom{n+k-1}{k},\ {\rm dim}_{\mathbb{C}}E_{-k}=2^{[\frac{n}{2}]}\binom{n+k-2}{k-1}. 
    }
    Then
    \eq{
        {\rm dim}_{\mathbb{C}}V_{k}+{\rm dim}_{\mathbb{C}}V_{-k}&\geq{\rm dim}_{\mathbb{C}}({\rm span}_{\mathbb{C}}\{f\xi_{\alpha}\})\\
        &=2^{[\frac{n}{2}]}\binom{n+k-1}{k}\frac{n+2k-1}{n+k-1}\\
        &={\rm dim}_{\mathbb{C}}E_{k}+{\rm dim}_{\mathbb{C}}E_{-k}\\
        &\geq {\rm dim}_{\mathbb{C}}V_{k}+{\rm dim}_{\mathbb{C}}V_{-k}.
    }
    Hence $E_{k}=V_{k}$ and $E_{-k}=V_{-k}$.
\end{proof}

\begin{remark}\label{rem2.2} It is not easy to determine an orthonormal basis for $E_k$, or for $E_{-k}$. However for $F_k\coloneqq E_k\oplus E_{-k}$ one can find easily an orthonormal basis. In fact
\[
F_k= {\rm span}_{\mathbb{C}}\{ f\xi \,| \, f\in P_k, \xi \in E_0\},
\]and hence 
an orthonormal basis for $E_0$ and an orthonormal basis for $P_k$ build an orthonormal basis for $F_k$.
    
\end{remark}

\subsection{Sobolev spaces} 
Since the Dirac operator $ \D$ is invertible on $\S^n$, 
for spinor fields on $\S^n$ we consider the Sobolev space
$W^{1,p}$ with $p=\frac {2n}{n+1}$ and with the equivalent norm
$$\|\varphi\|^p_{W^{1,p}} \coloneqq \int |\D  \varphi|^p.$$
Remind that $ \int |\D  \varphi|^p$ is conformally invariant for $p=\frac{2n}{n+1} $. The classical Sobolev inequality implies that 
\[
\|\varphi \|_{L^{\frac {2n}{n-1} } }  \le C_1 \|\varphi\|^p_{W^{1,p}} 
\]
and 
\eq{ \label{O_Sobolev}
\langle \D\varphi , \varphi \rangle  \le C_2 \|\varphi\|^p_{W^{1,p}},
}
for $C_1, C_2>0$. All functionals given above are conformally invariant, while $\int |\D \varphi|^2$ is not.

In the paper we use the $L^2$ orthogonality:
$\varphi$ and $\psi$ is orthogonal, if and only  if
\[
\int \langle \varphi, \psi\rangle =0.
\]

\section{Estimates for eigenspinors}

\begin{lemma}\label{anti-symmetry}

    Let $\xi,\chi \in E_0$. For any function $g$ we have
 \eq{
        \int  \< \chi, \rd g \cdot \xi \> = 0.
    }
 As a consequence,  for any functions $f$ and $g$ we have
    \eq{
        \int f \< \chi, \rd g \cdot \xi \> = -\int g \< \chi, \rd f \cdot \xi \>.
    }
In particular $\int f \< \chi, \rd f \cdot \xi \> =0$.
\end{lemma}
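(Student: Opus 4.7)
The plan is to exploit two things simultaneously: the self-adjointness of the Dirac operator, and the fact that any element of $E_{0}$ is an eigenspinor of $\D$ with eigenvalue $\tfrac{n}{2}$ (this is recorded right after the Killing spinor equation in Section 2.1, since a $-\tfrac12$-Killing spinor is an $\tfrac{n}{2}$-eigenspinor of $\D$). Together with the product rule $\D(g\xi)=\rd g\cdot\xi+g\,\D\xi$, these three facts collapse the integral under consideration to zero without any local computation.

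Concretely, I would first write
\eq{
\int \<\D\chi,\, g\xi\> = \int \<\chi,\, \D(g\xi)\> = \int \<\chi,\, \rd g\cdot\xi\> + \int g\,\<\chi,\,\D\xi\>,
}
using that $\D$ is formally self-adjoint (Section 2.1) and the Schrödinger-type product rule for $\D$. Since $\D\chi=\tfrac n2\chi$ and $\D\xi=\tfrac n2\xi$, both the left-hand side and the last term on the right equal $\tfrac n2\int g\<\chi,\xi\>$, and these cancel. This leaves $\int\<\chi,\rd g\cdot\xi\>=0$, which is exactly the first identity.

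For the consequence, I would apply the first identity with $g$ replaced by the product $fg$. Expanding $\rd(fg)=f\,\rd g+g\,\rd f$ and using linearity of the Clifford action gives
\eq{
0=\int\<\chi,\rd(fg)\cdot\xi\>=\int f\<\chi,\rd g\cdot\xi\>+\int g\<\chi,\rd f\cdot\xi\>,
}
which is the claimed anti-symmetry. Specializing $f=g$ yields $2\int f\<\chi,\rd f\cdot\xi\>=0$, proving the last assertion.

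There is essentially no real obstacle here beyond checking that $\alpha(X):=\<\chi, X\cdot\xi\>$ behaves well under the conventions of the paper; in particular that the real inner product used in $\<\cdot,\cdot\>$ is compatible with integration by parts for $\D$. Both are already recalled in Section 2.1, so the proof is just a three-line application of self-adjointness plus the Killing property. An alternative — computing $\rdiv(\alpha^\sharp)$ directly from the Killing equation $\nabla_X\xi=-\tfrac12 X\cdot\xi$ and using the Clifford relations $e_i\cdot e_i=-1$ together with the anti-symmetry of Clifford multiplication — gives the same conclusion and can be mentioned as a remark, but the self-adjointness argument is the cleanest.
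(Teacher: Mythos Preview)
Your proof is correct and follows essentially the same approach as the paper: both arguments use $\D(g\xi)=\rd g\cdot\xi+\tfrac n2 g\xi$, the self-adjointness of $\D$, and the fact that $\chi,\xi$ are $\tfrac n2$-eigenspinors to cancel the two $\tfrac n2\int g\<\chi,\xi\>$ terms. Your derivation of the consequence via $\rd(fg)=f\,\rd g+g\,\rd f$ is exactly what the paper leaves implicit.
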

\begin{proof}
    Since $-\frac 12$-Killing spinor is also a $\frac{n}{2}$-eigenspinor, we have
    \eq{
        \int  \< \chi, \rd g \cdot \xi \> &= \int \< \chi, \D(g\xi)-\frac{n}{2}g\xi\>\\
        &=-\frac{n}{2}\int g\<\chi,\xi\> + \int \<\D(\chi),g\xi\>\\
        &=-\frac{n}{2}\int g\<\chi,\xi\> + \frac{n}{2}\int g\<\chi,\xi\> =0.
    }
\end{proof}

\begin{proposition}\label{another_eigenfunction_1}
Let $f\in P_k$ and $\xi,\chi \in E_0$. Then 
   $\<\chi,\rd f\cdot\xi\> \in P_k$ and is $L^2$-orthogonal to $f$.
\end{proposition}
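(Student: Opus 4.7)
The $L^2$-orthogonality part, $\int f\,h = 0$ with $h \coloneqq \<\chi, \rd f \cdot \xi\>$, is immediate: it is precisely the specialisation of Lemma \ref{anti-symmetry} to $g=f$. So the substantive claim is that $h\in P_k$, i.e.\ that $-\Delta h = k(n+k-1)h$. My plan is to route this through a general Bochner-type identity that exploits the Killing condition on $\chi$, and then invoke the explicit action of $\D$ on $\rd f\cdot\xi$ from \eqref{Dirac_operator}.

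First, I would prove that for every $\chi\in E_0$ and every spinor field $\psi$ on $\S^n$, one has the pointwise identity
\eq{
    -\Delta \<\chi, \psi\> = \<\chi, (\D^2 - \D)\psi\> - \frac{n(n-2)}{4}\<\chi, \psi\>.
}
Starting from the Leibniz-type rule $-\Delta\<\alpha,\beta\> = \<\nabla^{*}\nabla\alpha,\beta\> + \<\alpha,\nabla^{*}\nabla\beta\> - 2\sum_i\<\nabla_{e_i}\alpha,\nabla_{e_i}\beta\>$, I would set $\alpha=\chi$ and plug in the Killing equation $\nabla_{e_i}\chi = -\tfrac{1}{2}e_i\cdot\chi$. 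This yields $\nabla^{*}\nabla\chi=\tfrac{n}{4}\chi$ and, via the anti-symmetry of Clifford multiplication, collapses the cross term to $\sum_i\<\nabla_{e_i}\chi,\nabla_{e_i}\psi\> = \tfrac{1}{2}\<\chi,\D\psi\>$. Replacing $\nabla^{*}\nabla\psi$ with $\D^2\psi - \tfrac{n(n-1)}{4}\psi$ via \eqref{Bochner} (with $R=n(n-1)$ on $\S^n$) and collecting constants yields the displayed identity.

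Next I would apply this identity to $\psi = \rd f\cdot\xi$. Formula \eqref{Dirac_operator} together with $-\Delta f=k(n+k-1)f$ gives $\D(\rd f\cdot\xi) = k(n+k-1)f\xi-\tfrac{n-2}{2}\rd f\cdot\xi$, and iterating $\D$ by means of the auxiliary identity $\D(f\xi) = \rd f\cdot\xi+\tfrac{n}{2}f\xi$, the $f\xi$-components miraculously cancel, leaving
\eq{
    (\D^2 - \D)(\rd f\cdot\xi) = \Bigl[k(n+k-1) + \frac{n(n-2)}{4}\Bigr]\rd f\cdot\xi.
}
Substituting this back into the pointwise identity and cancelling the two $\tfrac{n(n-2)}{4}$-terms delivers exactly $-\Delta h = k(n+k-1)h$, so $h\in P_k$.

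The delicate step of the plan is the cancellation of the $f\xi$-component in $(\D^2-\D)(\rd f\cdot\xi)$: a priori $h$ is a mixture of the two eigenspinor modes in $E_k\oplus E_{-k}$ to which $\rd f\cdot\xi$ belongs by Proposition \ref{classification}, and it is this cancellation that promotes $h$ to a genuine eigenfunction of $-\Delta$ with the single eigenvalue $k(n+k-1)$. Everything else is bookkeeping between the Killing equation for $\chi$, the Schr\"odinger–Lichnerowicz formula, and the scalar Leibniz rule on $\S^n$.
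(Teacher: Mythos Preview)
Your argument is correct and follows essentially the same route as the paper: both proofs combine the Leibniz rule for $-\Delta$ on $\<\chi,\cdot\>$, the Killing equation for $\chi$, the Schr\"odinger--Lichnerowicz formula, and the action \eqref{Dirac_operator} of $\D$ on $\rd f\cdot\xi$. The only cosmetic difference is that the paper first reduces to $\chi\perp\xi$ (so that the $f\xi$-terms drop via $\<\chi,\xi\>=0$), whereas you observe directly that these terms cancel in $(\D^2-\D)(\rd f\cdot\xi)$, which is slightly cleaner and yields the general pointwise identity $-\Delta\<\chi,\psi\>=\<\chi,(\D^2-\D)\psi\>-\tfrac{n(n-2)}{4}\<\chi,\psi\>$ as a bonus.
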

\begin{proof}
    Let $\{e_{i}\}_{i=1}^{n}$ be a local orthonormal basis on $\S^n$. By \eqref{Dirac_operator} we have
    \eq{
        \D^2(\rd f\cdot\xi)&=\D\left(k(n+k-1)f\xi-\frac{n-2}{2}\rd f\cdot\xi\right)\\
        &=k(n+k-1)\left(\frac{n}{2}f\xi+\rd f\cdot\xi\right)-\frac{n-2}{2}\left(k(n+k-1)f\xi-\frac{n-2}{2}\rd f\cdot\xi\right)\\
        &=k(n+k-1)f\xi+\left(k(n+k-1)+\frac{(n-2)^2}{4}\right)\rd f\cdot\xi.
    }
    Since $\<\xi, \rd f\cdot \xi\>=0$, we may assume $\chi$ and $\xi$ are orthogonal, otherwise we replace $\chi$ by $\chi-\left<\chi,\frac{\xi}{\abs{\xi}}\right\>\frac{\xi}{\abs{\xi}}$. Using the Schr\"odinger-Lichnerowicz formula we have
    \begin{align}
         &-\Delta\<\chi,\rd f\cdot\xi\>\\
        &=\<-\Delta\chi,\rd f\cdot\xi\>-2\<\nabla\chi,\nabla(\rd f\cdot\xi)\>+\<\chi,-\Delta(\rd f\cdot\xi)\>\\
        &=\<\left(\D^2-\frac{n(n-1)}{4}\right)\chi,\rd f\cdot\xi\>-2\<\nabla_{e_{i}}\chi,\nabla_{e_{i}}(\rd f\cdot\xi)\>+\<\chi,\left(\D^2-\frac{n(n-1)}{4}\right)(\rd f\cdot\xi)\>\\
        &=\frac{-n^2+2n}{4}\big\<\chi,\rd f\cdot\xi\big\>+\big\<e_{i}\cdot\chi,\nabla_{e_{i}}(\rd f\cdot\xi)\big\>+\big\<\chi,\D^2(\rd f\cdot\xi)\big\>\\
        &=\left(\frac{-n^2+2n}{4}+k(n+k-1)+\frac{(n-2)^2}{4}\right)\big\<\chi,\rd f\cdot\xi\big\>-\big\<\chi,\D(\rd f\cdot\xi)\big\>\\
        &=\left(\frac{-n^2+2n}{4}+k(n+k-1)+\frac{(n-2)^2}{4}\right)\big\<\chi,\rd f\cdot\xi\big\>-\<\chi,k(n+k-1)f\xi-\frac{n-2}{2}\rd f\cdot\xi\>\\
        &=k(n+k-1)\big\<\chi,\rd f\cdot\xi\big\>.
    \end{align}
    Moreover from Lemma \ref{anti-symmetry} we have $\int f\big\<\chi,\rd f\cdot\xi\big\> = 0$.
\end{proof}

Now the second statement in  Proposition \ref{thm2} is proved in 
\begin{corollary}
\label{another_eigenfunction_2}
 Let $\xi\in E_0$. For any $\varphi_{\pm k} \in E_{\pm k}$ $(k\ge 1)$ we have $\langle \xi, \varphi_{\pm k}\rangle \in P_k$.
In particular, we have
\eq{\label{decomposed}
\int \langle \xi, \varphi _{\pm k}\rangle \langle \xi, \varphi_{\pm j}\rangle =0, 
\quad \hbox{ for } j\not = k.} 
\end{corollary}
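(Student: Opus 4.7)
The plan is to reduce the claim directly to Proposition \ref{classification} and Proposition \ref{another_eigenfunction_1}, with one preliminary observation about Killing spinors.

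First I would record the elementary fact that for any two $-\frac{1}{2}$-Killing spinors $\xi,\chi\in E_0$ the real-valued pointwise inner product $\langle\xi,\chi\rangle$ is constant on $\S^n$. Indeed, using the Killing equation $\nabla_X\xi=-\tfrac{1}{2}X\cdot\xi$ together with the antisymmetry of Clifford multiplication with respect to $\langle\cdot,\cdot\rangle$,
\[
\nabla_X\langle\xi,\chi\rangle = -\tfrac{1}{2}\langle X\cdot\xi,\chi\rangle - \tfrac{1}{2}\langle\xi,X\cdot\chi\rangle = 0.
\]

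Next, by Proposition \ref{classification}, any $\varphi_{k}\in E_{k}$ is a finite $\mathbb{C}$-linear combination of elements of the form $(n+k-1)f_\alpha\chi_\alpha+df_\alpha\cdot\chi_\alpha$ with $f_\alpha\in P_k$ and $\chi_\alpha\in E_0$, and similarly every $\varphi_{-k}\in E_{-k}$ is a combination of elements of the form $-kf_\alpha\chi_\alpha+df_\alpha\cdot\chi_\alpha$. Pairing with $\xi$ produces two types of contributions: constants $\langle\xi,\chi_\alpha\rangle$ times the spherical harmonics $f_\alpha\in P_k$, which manifestly lie in $P_k$; and pairings $\langle\xi,df_\alpha\cdot\chi_\alpha\rangle$, which lie in $P_k$ by Proposition \ref{another_eigenfunction_1}. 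Summing, $\langle\xi,\varphi_{\pm k}\rangle\in P_k$.

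Finally, the orthogonality \eqref{decomposed} is immediate: for $j\neq k$ the functions $\langle\xi,\varphi_{\pm k}\rangle\in P_k$ and $\langle\xi,\varphi_{\pm j}\rangle\in P_j$ are real eigenfunctions of $-\Delta$ on $\S^n$ corresponding to the distinct eigenvalues $k(n+k-1)$ and $j(n+j-1)$, hence are $L^2$-orthogonal. No serious obstacle appears in this argument; the essential work has already been carried out in Proposition \ref{classification} (the explicit description of the eigenspaces $E_{\pm k}$) and Proposition \ref{another_eigenfunction_1} (which identifies $\langle\chi,df\cdot\xi\rangle$ as a degree-$k$ spherical harmonic), and the constancy of $\langle\xi,\chi\rangle$ is a one-line computation from the Killing equation.
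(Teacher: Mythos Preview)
Your proof is correct and follows essentially the same route as the paper: reduce to generators of $E_{\pm k}$ via Proposition~\ref{classification}, handle the term $\langle\xi,df\cdot\chi\rangle$ by Proposition~\ref{another_eigenfunction_1}, and use constancy of $\langle\xi,\chi\rangle$ for the $f$-term. The only difference is that you make the constancy of $\langle\xi,\chi\rangle$ explicit, which the paper leaves tacit.
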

\begin{proof}
    We only prove for $\varphi_{k} \in E_{k}$. The case $\varphi_{-k} \in E_{-k}$ is the same.
    It is sufficient to show the statement for  any  $\varphi_k=(n+k-1)f \chi +df\cdot \chi $ with $f\in P_k$ and $\chi\in E_0$.  It follows, together with the previous Proposition, from 
    \eq{
        \<\xi,\varphi_{k}\>=\<\xi,(n+k-1)f\chi+\rd f\cdot\chi \>=(n+k-1) \<\xi,\chi\>f+\<\xi,\rd f\cdot\chi\>.
    }
\end{proof}

\begin{remark}
\eqref{decomposed} will be used in the expansion in the computation of $G$ in the proof of Theorem \ref{local_stability_inequality}.
However
\[
\int\<\xi, \varphi_k\>\<\xi, \varphi_{-k}\> \hbox{ is in general nonzero.}
\]
\end{remark}

\begin{lemma}
    \label{new_lemma}
Let $\varphi \in F_k=E_k\oplus E_{-k}$. 
If we write it as (see Remark \ref{rem2.2})
\eq{\label{eq1}
        \varphi_{k} = \sum_{i,\alpha} c_{i,\alpha}h_{i}\xi_{\alpha},
    }
then we have
\begin{enumerate}
    \item  $\varphi\in E_k$ if and only if 
    \eq{\label{eq2}
        \varphi_{k} = \sum_{i,\alpha} c_{i,\alpha}h_{i}\xi_{\alpha} = \frac{1}{k} \sum_{i,\alpha} c_{i,\alpha}\rd h_{i}\cdot\xi_{\alpha};}
    \item $\varphi\in E_{-k}$
        if and only if       
 \eq{\label{eq4}
        \varphi_{-k} = \sum_{i,\alpha} c_{i,\alpha}h_{i}\xi_{\alpha} = -\frac{1}{n+k-1} \sum_{i,\alpha} c_{i,\alpha}\rd h_{i}\cdot\xi_{\alpha}.
    }
\end{enumerate}

    \end{lemma}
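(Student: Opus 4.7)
The plan is to prove both equivalences by a direct computation using the Dirac operator and the defining property of $-\tfrac{1}{2}$-Killing spinors, namely $\D\xi_{\alpha}=\tfrac{n}{2}\xi_{\alpha}$ for each $\xi_{\alpha}\in E_{0}$. Since Remark \ref{rem2.2} says that $\{h_{i}\xi_{\alpha}\}$ (where $\{h_{i}\}$ is a basis of $P_{k}$ and $\{\xi_{\alpha}\}$ a basis of $E_{0}$) is a basis of $F_{k}$, the expansion $\varphi=\sum c_{i,\alpha}h_{i}\xi_{\alpha}$ determines the coefficients uniquely, so the two forms in \eqref{eq2} and \eqref{eq4} are to be read as additional relations satisfied by $\varphi$.

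First I would apply the Leibniz rule $\D(f\psi)=\rd f\cdot\psi+f\D\psi$ to each summand $h_{i}\xi_{\alpha}$, obtaining
\eq{
\D(h_{i}\xi_{\alpha})=\rd h_{i}\cdot\xi_{\alpha}+\tfrac{n}{2}h_{i}\xi_{\alpha}.
}
Summing against the coefficients $c_{i,\alpha}$ gives the key identity
\eq{\label{plan_key}
\D\varphi=\sum_{i,\alpha}c_{i,\alpha}\,\rd h_{i}\cdot\xi_{\alpha}+\tfrac{n}{2}\varphi.
}

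Both equivalences then reduce to matching eigenvalues of $\D$. For (1), $\varphi\in E_{k}$ is by definition $\D\varphi=(\tfrac{n}{2}+k)\varphi$, which by \eqref{plan_key} is equivalent to $\sum c_{i,\alpha}\rd h_{i}\cdot\xi_{\alpha}=k\varphi$, i.e. to \eqref{eq2}. For (2), $\varphi\in E_{-k}$ means $\D\varphi=-(\tfrac{n}{2}+k-1)\varphi$, so \eqref{plan_key} gives $\sum c_{i,\alpha}\rd h_{i}\cdot\xi_{\alpha}=-(n+k-1)\varphi$, which is \eqref{eq4}. Conversely, if both expressions for $\varphi$ hold, substitution into \eqref{plan_key} immediately shows $\varphi$ is an eigenspinor of $\D$ with the corresponding eigenvalue, hence lies in the claimed eigenspace.

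There is no substantial obstacle: the lemma is essentially an unpacking of the eigenvalue equation $\D\varphi=\lambda\varphi$ in the trivialization of $\Sigma\S^{n}$ by a basis of $E_{0}$, made possible because Killing spinors are parallel enough that $\D(h\xi_{\alpha})$ splits cleanly into a scalar part $\tfrac{n}{2}h\xi_{\alpha}$ and a Clifford part $\rd h\cdot\xi_{\alpha}$. The only point requiring attention is invoking Remark \ref{rem2.2} to guarantee that the coefficients $c_{i,\alpha}$ in the decomposition $\varphi=\sum c_{i,\alpha}h_{i}\xi_{\alpha}$ are well defined, which is what allows the two displayed expressions for $\varphi$ to be compared unambiguously.
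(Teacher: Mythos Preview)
Your proof is correct and follows essentially the same approach as the paper: both compute $\D(h_{i}\xi_{\alpha})=\tfrac{n}{2}h_{i}\xi_{\alpha}+\rd h_{i}\cdot\xi_{\alpha}$ and then compare the resulting expression for $\D\varphi$ with the eigenvalue equation $\D\varphi=\lambda\varphi$ for $\lambda=\tfrac{n}{2}+k$ and $\lambda=-(\tfrac{n}{2}+k-1)$. Your presentation is in fact slightly cleaner, since you state the converse direction explicitly.
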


    \begin{proof}
(1)  We consider $\varphi_{k}\in E_{k}$.  Note that $\varphi_{k}\in E_{k}$ is characterized by $\D\varphi_{k}=\left(\frac{n}{2}+k\right)\varphi_{k}$, so we have
    \eq{
        \left(\frac{n}{2}+k\right) \sum_{i,\alpha} c_{i,\alpha}h_{i}\xi_{\alpha} = \sum_{i,\alpha} c_{i,\alpha} \D(h_{i}\xi_{\alpha}) = \sum_{i,\alpha} c_{i,\alpha} \left( \frac{n}{2}h_{i}\xi_{\alpha} + \rd h_{i}\cdot\xi_{\alpha} \right),
    }
    which implies
    \eq{
        \varphi_{k} = \sum_{i,\alpha} c_{i,\alpha}h_{i}\xi_{\alpha} = \frac{1}{k} \sum_{i,\alpha} c_{i,\alpha}\rd h_{i}\cdot\xi_{\alpha}.
    }

  (2) Now we consider $\varphi\in E_{-k}$ using the similar idea.
Since $\varphi_{-k}\in E_{-k}$ is characterized by $\D\varphi_{-k}=\left(-\frac{n}{2}-k+1\right)\varphi_{-k}$, so we have
    \eq{
        \left(-\frac{n}{2}-k+1\right) \sum_{i,\alpha} c_{i,\alpha}h_{i}\xi_{\alpha} = \sum_{i,\alpha} c_{i,\alpha} \D(h_{i}\xi_{\alpha}) = \sum_{i,\alpha} c_{i,\alpha} \left( \frac{n}{2}h_{i}\xi_{\alpha} + \rd h_{i}\cdot\xi_{\alpha} \right),
    }
    which implies
    \eq{
        \varphi_{-k} = \sum_{i,\alpha} c_{i,\alpha}h_{i}\xi_{\alpha} = -\frac{1}{n+k-1} \sum_{i,\alpha} c_{i,\alpha}\rd h_{i}\cdot\xi_{\alpha}.
    }

\end{proof}

Now we restate the first statement in Proposition \ref{thm2}.

\begin{proposition}\label{crossing_estimate}
    Let $\xi \in E_{0}$ with $\abs{\xi}=1$. For any $k\geq 1$ and $\varphi_{\pm k}\in E_{
\pm k}$, we have
    \eq{
        \int\<\xi,\varphi_{k}\>^2\leq\frac{n+k-1}{n+2k-1} \int |\varphi_k|^2\label{crossing_estimate_1}
    }
    and
    \eq{
        \int\<\xi,\varphi_{-k}\>^2\leq\frac{k}{n+2k-1}\int|\varphi_{-k}|^2.\label{crossing_estimate_2}
    }
    Moreover, equality in \eqref{crossing_estimate_1} holds if and only if
    \eq{
        \varphi_{k}={(n+k-1)f \xi +\rd f\cdot\xi} 
        \quad \textrm{for some }f\in P_k,
    }
    and equality in \eqref{crossing_estimate_2} holds if and only if
    \eq{
        \varphi_{-k}={-kf\xi+\rd f\cdot\xi} \quad \textrm{for some } f\in P_k.
    }
\end{proposition}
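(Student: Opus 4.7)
The plan is to recast both inequalities as bounds on the operator norm of an orthogonal projection, which can be computed explicitly from Proposition~\ref{classification}. Introduce the subspace $V_\xi \coloneqq \{f\xi : f \in P_k\} \subset F_k = E_k \oplus E_{-k}$ and denote by $P_\xi$ the orthogonal projection onto $V_\xi$ with respect to the real $L^2$ inner product $\int\langle\cdot,\cdot\rangle$. Since Killing spinors have constant pointwise inner products, $\xi$ extends to a pointwise orthonormal frame $\xi_1 = \xi, \xi_2, \dots, \xi_{2^{[n/2]}}$ of $E_0$; together with an $L^2$-orthonormal basis $\{h_i\}$ of $P_k$, the family $\{h_i\xi_\alpha\}$ is an orthonormal basis of $F_k$ (cf.\ Remark~\ref{rem2.2}). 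A direct expansion using $\langle\xi_1,\xi_\alpha\rangle = \delta_{1\alpha}$ yields
\[
\int\langle\xi,\varphi\rangle^2 \;=\; \|P_\xi\varphi\|_{L^2}^2 \qquad \text{for every } \varphi\in F_k,
\]
so the task reduces to the operator-norm bounds $\|P_\xi|_{E_{\pm k}}\|^2 \leq \tfrac{n+k-1}{n+2k-1}$ and $\tfrac{k}{n+2k-1}$ respectively.

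The central step is to compute $P_\xi P_{E_{\pm k}}$ on the subspace $V_\xi$. For any $f\in P_k$, Proposition~\ref{classification} provides the explicit $E_k\oplus E_{-k}$ decomposition of $f\xi$: setting
\[
\psi_+ \coloneqq (n+k-1)f\xi + \rd f\cdot\xi \in E_k, \qquad \psi_- \coloneqq -kf\xi + \rd f\cdot\xi \in E_{-k},
\]
one has $f\xi = (\psi_+ - \psi_-)/(n+2k-1)$, whence $P_{E_k}(f\xi) = \psi_+/(n+2k-1)$ and $P_{E_{-k}}(f\xi) = -\psi_-/(n+2k-1)$. The key observation is that $\rd f\cdot\xi$ is $L^2$-orthogonal to $V_\xi$: antisymmetry of Clifford multiplication gives $\langle \rd f\cdot\xi, g\xi\rangle = g\langle \rd f\cdot\xi,\xi\rangle = 0$ pointwise for every $g\in P_k$. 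Projecting back by $P_\xi$ therefore yields the clean operator identities
\[
P_\xi P_{E_k} P_\xi \;=\; \frac{n+k-1}{n+2k-1}\,P_\xi, \qquad P_\xi P_{E_{-k}} P_\xi \;=\; \frac{k}{n+2k-1}\,P_\xi
\]
on $F_k$.

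Both inequalities now fall out in a few lines. For $\varphi\in E_k$, using $P_{E_k}\varphi = \varphi$, self-adjointness, and Cauchy--Schwarz,
\[
\|P_\xi\varphi\|^2 \;=\; \langle P_{E_k}P_\xi\varphi,\varphi\rangle \;\leq\; \|P_{E_k}P_\xi\varphi\|\,\|\varphi\|,
\]
while the identity gives $\|P_{E_k}P_\xi\varphi\|^2 = \langle P_\xi P_{E_k}P_\xi\varphi,\varphi\rangle = \tfrac{n+k-1}{n+2k-1}\|P_\xi\varphi\|^2$; dividing out $\|P_\xi\varphi\|$ produces \eqref{crossing_estimate_1}, and the companion identity handles \eqref{crossing_estimate_2}. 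Cauchy--Schwarz equality forces $\varphi$ to be proportional to $P_{E_k}P_\xi\varphi = P_{E_k}(f\xi)$ with $f\xi = P_\xi\varphi$, hence to $(n+k-1)f\xi + \rd f\cdot\xi$, matching the claimed form; the $E_{-k}$ case is symmetric with $-kf\xi + \rd f\cdot\xi$. The main conceptual obstacle is to identify $\int\langle\xi,\varphi\rangle^2$ as the squared norm of a projection and to decompose $f\xi$ cleanly via $\psi_\pm$; once that framework is in place, the antisymmetry $\langle X\cdot\xi,\xi\rangle = 0$ does the rest of the work.
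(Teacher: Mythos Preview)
Your proof is correct and takes a genuinely different route from the paper's argument. The paper proceeds coordinate-wise: it invokes the dual representation of Lemma~\ref{new_lemma} (namely $\varphi_k=\frac1k\sum c_{i,\alpha}\,\rd h_i\cdot\xi_\alpha$), replaces one copy of $\varphi_k$ in $\int\<\xi,\varphi_k\>^2$ by this dual form, applies the integrated anti-symmetry of Lemma~\ref{anti-symmetry} to swap $h_i$ and $h_j$, and then uses Cauchy--Schwarz between $\rd g\cdot\xi$ (with $g=\sum_i\rRe(c_{i,1})h_i$) and the component of $\varphi_k$ orthogonal to $\xi_1$. Your argument bypasses Lemmas~\ref{anti-symmetry} and~\ref{new_lemma} entirely: recognizing $\int\<\xi,\varphi\>^2=\|P_\xi\varphi\|^2$, you feed the explicit $E_k\oplus E_{-k}$ splitting of $f\xi$ from Proposition~\ref{classification} and the pointwise identity $\<\rd f\cdot\xi,\xi\>=0$ directly into the projection calculus to obtain $P_\xi P_{E_{\pm k}}P_\xi=\tfrac{n+k-1}{n+2k-1}P_\xi$ (resp.\ $\tfrac{k}{n+2k-1}P_\xi$), after which a single Cauchy--Schwarz between $P_{E_k}P_\xi\varphi$ and $\varphi$ finishes. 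Your route is more conceptual and shorter, needing only Proposition~\ref{classification} and Clifford anti-symmetry; the paper's computation is more hands-on but makes the coordinate structure (and hence the equality analysis via $\rRe(c_{i,1})$) completely explicit. One small point: in your equality discussion you should note separately that if $P_\xi\varphi=0$ then equality in \eqref{crossing_estimate_1} forces $\varphi=0$, which is trivially of the stated form; your Cauchy--Schwarz argument as written implicitly assumes $P_\xi\varphi\neq0$.
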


\begin{proof}
    First we consider $\varphi_{k}=\sum_{i,\alpha} c_{i,\alpha}h_i\xi_\alpha\in E_{k}$. Without loss of generality we may assume $\xi=\xi_1$.
    In view of \eqref{eq2}, we have 
    \begin{align} \label{eq5}
        \int\<\xi,\varphi_{k}\>^2 &= \frac{1}{k} \int \< \xi_{1}, \sum_{i,\alpha} c_{i,\alpha}h_{i}\xi_{\alpha} \> \< \xi_{1}, \sum_{j,\beta} c_{j,\beta}\rd h_{j}\cdot\xi_{\beta} \>\\
        &= \frac{1}{k} \sum_{i}\rRe(c_{i,1}) \sum_{j,\beta}\int h_{i}\< \xi_{1}, \rd h_{j}\cdot c_{j,\beta}\xi_{\beta} \>\\
        &= -\frac{1}{k} \sum_{i}\rRe(c_{i,1}) \sum_{j,\beta}\int h_{j}\< \xi_{1}, \rd h_{i}\cdot c_{j,\beta}\xi_{\beta} \>\\
        &= -\frac{1}{k} \sum_{i}\rRe(c_{i,1}) \int \< \xi_{1}, \rd h_{i}\cdot\sum_{j,\beta}c_{j,\beta}h_{j}\xi_{\beta} \>\\
        &= -\frac{1}{k} \sum_{i}\rRe(c_{i,1}) \int \< \xi_{1}, \rd h_{i}\cdot\varphi_{k} \>\\
        &= \frac{1}{k} \sum_{i}\rRe(c_{i,1}) \int \< \rd h_{i}\cdot\xi_{1}, \varphi_{k} \>,
    \end{align}
    where in the third equality we have used Lemma \ref{anti-symmetry}. Since $\< \rd h_{i}\cdot\xi_{1}, \xi_{1} \>=0$, we have
    \begin{align}
        \int\<\xi,\varphi_{k}\>^2 &= \frac{1}{k} \sum_{i}\rRe(c_{i,1}) \int \< \rd h_{i}\cdot\xi_{1}, \varphi_{k} - \sum_{j}c_{j,1}h_{j}\xi_{1} \>\\
        &= \frac{1}{k} \int \< \rd \Big(\sum_{i}\rRe(c_{i,1})h_{i}\Big)\cdot\xi_{1}, \varphi_{k} - \sum_{j}c_{j,1}h_{j}\xi_{1} \>\\
        &\leq \frac{1}{k} \left( \int \Big|\rd \Big(\sum_{i}\rRe(c_{i,1})h_{i}\Big)\Big|^2 \right)^{\frac{1}{2}} \left( \int \Big|\sum_{j,\beta\neq 1}c_{j,\beta}h_{j}\xi_{\beta}\Big|^2 \right)^{\frac{1}{2}}\label{holder_1}\\
        &= \frac{1}{k} \Big( \sum_{i}\rRe(c_{i,1})^2 k(n+k-1) \Big)^{\frac{1}{2}} \Big( 1-\sum_i|c_{i,1}|^2 \Big)^{\frac{1}{2}}\\
        &\leq \sqrt{\frac{n+k-1}{k}}\Big( \sum_{i}\rRe(c_{i,1})^2 \Big)^{\frac{1}{2}}\Big( 1-\sum_{i}\rRe(c_{i,1})^2 \Big)^{\frac{1}{2}}. \label{real_part}
    \end{align}
    On the other hand
    \eq{
        \int\<\xi,\varphi_{k}\>^2 = \int \Big\< \xi_{1}, \sum_{i,\alpha} c_{i,\alpha}h_{i}\xi_{\alpha} \Big>^2 = \int \Big( \sum_{i} \rRe(c_{i,1})h_{i} \Big)^2 = \sum_{i} \rRe(c_{i,1})^2.
    }
    Together with \eqref{holder_1} follows
    \eq{
        \int\<\xi,\varphi_{k}\>^2 \leq \frac{n+k-1}{n+2k-1}.
    }
    
    Next we consider $\varphi_{-k}\in E_{-k}$.
    Similarly as the proof for \eqref{eq5},
    by using \eqref{eq4} we have
\begin{align}
        \int\<\xi,\varphi_{-k}\>^2 &= -\frac{1}{n+k-1} \sum_{i}\rRe(c_{i,1}) \int \< \rd h_{i}\cdot\xi_{1}, \varphi_{-k}\>.\end{align}
Using $\< \rd h_i \cdot \xi_1, \xi_1\>=0$, we have 
    \begin{align}
        \int\<\xi,\varphi_{-k}\>^2 &= -\frac{1}{n+k-1} \sum_{i}\rRe(c_{i,1}) \int \< \rd h_{i}\cdot\xi_{1}, \varphi_{-k} - \sum_{j}c_{j,1}h_{j}\xi_{1} \>\\
        &= -\frac{1}{n+k-1} \int \< \rd \Big(\sum_{i}\rRe(c_{i,1})h_{i}\Big)\cdot\xi_{1}, \varphi_{-k} - \sum_{j}c_{j,1}h_{j}\xi_{1} \>\\
        &\leq \frac{1}{n+k-1} \left( \int \Big|\rd \Big(\sum_{i}\rRe(c_{i,1})h_{i}\Big)\Big|^2 \right)^{\frac{1}{2}} \left( \int \Big|\sum_{j,\beta\neq 1}c_{j,\beta}h_{j}\xi_{\beta}\Big|^2 \right)^{\frac{1}{2}}\\
        &= \frac{1}{n+k-1} \Big( \sum_{i}\rRe(c_{i,1})^2 k(n+k-1) \Big)^{\frac{1}{2}} \Big( 1-\sum_i|c_{i,1}|^2 \Big)^{\frac{1}{2}}\\
        &\leq \sqrt{\frac{k}{n+k-1}}\Big( \sum_{i}\rRe(c_{i,1})^2 \Big)^{\frac{1}{2}}\Big( 1-\sum_{i}\rRe(c_{i,1})^2 \Big)^{\frac{1}{2}}. \label{holder_2}
    \end{align}
    On the other hand
    \eq{
        \int\<\xi,\varphi_{-k}\>^2 = \int \Big\< \xi_{1}, \sum_{i,\alpha} c_{i,\alpha}h_{i}\xi_{\alpha} \Big>^2 = \int \Big( \sum_{i} \rRe(c_{i,1})h_{i} \Big)^2 = \sum_{i} \rRe(c_{i,1})^2.
    }
    Together with \eqref{holder_2} it follows
    \eq{
        \int\<\xi,\varphi_{-k}\>^2 \leq \frac{k}{n+2k-1}.
    }

Equality in \eqref{crossing_estimate_1} holds if and only if 
equalities  in \eqref{holder_1} and \eqref{real_part} hold, and 
    if and only if $c_{i,1}\in\mathbb{R}$ and 
    $\varphi_{k} - \sum_{j}c_{j,1}h_{j}\xi_{1}$ is a positive scalar multiply of $\rd \big(\sum_{i}c_{i,1}h_{i}\big)\cdot\xi_{1}$.
       Note that $\sum_{i}c_{i,1}h_{i} = \<\xi_{1},\varphi_{k}\>\in P_{k}$ by Proposition \ref{another_eigenfunction_2}, hence equality in \eqref{holder_1} holds if and only if
    \eq{
        \varphi_{k} = \<\xi_{1},\varphi_{k}\>\xi_{1} + C\rd \<\xi_{1},\varphi_{k}\>\cdot\xi_{1}
    }
    for some constant $C>0$. Moreover, by Proposition \ref{classification} we must have $C=\frac{1}{n+k-1}$. Finally, equality in \eqref{crossing_estimate_1} holds if and only if (after normalization)
    \eq{
        \varphi_{k}=\frac{(n+k-1)f_k \xi +\rd f_{k}\cdot\xi}{\sqrt{(n+2k-1)(n+k-1)}}
    }
     where $f_{k} \in P_k$ with $\int f_k^2=1$. Similarly, equality in \eqref{crossing_estimate_2} holds if and only if (after normalization)
    \eq{
        \varphi_{-k}=\frac{-kf_{k}\xi+\rd f_{k}\cdot\xi}{\sqrt{k(n+2k-1)}},
    }
    where $f_{k} \in P_k$ with $\int f_k^2=1$.
\end{proof}

\section{Stability of the spinorial Sobolev inequality}

It is well-known that the Euler-Lagrange equation of $J$ is
\eq{\D\psi = |\psi|^{\frac 2 {n-1}} \psi,}
up to a mulitple  constant.

As mentioned above the set of all optimizers $\mathcal M$ consists of $-\frac 12$-Killing spinors and their conformal transformations, see \cite{A03}.
Let us consider the conformal transformations on $\mathbb{S}^n$.  For any $b\in \mathbb{R}^{n+1}$ with $|b|<1$,
\[
\Xi(x) =\frac {x+(\mu \langle x, b\rangle +\nu )b}{\nu (1+\langle x, b\rangle)}
\]
is a conformal transformation. Here
$\nu =(1-|b|^2 )^{-\frac 12} $ and $\mu= (\nu-1)|b|^{-2}$. 
One can check that the differential map $\Xi_*$ of $\Xi$ is 
\[
\Xi_* (v) =\nu^{-2} (1+ \langle x, b \rangle)^{-2}\{
\nu (1+\langle x, b\rangle) v -\nu \langle v, b\rangle x +\langle v, b\rangle (1-\nu) |b|^{-2} b
\},
\]
where $v$ is a tangent vector to $\mathbb{S}^n$ at $x$. It follows
\[
\langle \Xi_* (v), \Xi_* (w) \rangle =\frac {1-|b|^2 }
{ (1+\langle x, b\rangle)^2}\langle v, w\rangle,
\]
see \cite{MR86}. Hence $\Xi$ is conformal
with
\[(\det D\Xi)^ {\frac 1n} = \left( \frac  {1-|b|^2 }
{ (1+\langle x, b\rangle)^2} \right)^{\frac{1}{2}}. \]
Hence
all optimizers have the following form
\eq{\label{explicit_M}
{\mathcal M}= \left\{
\left(\frac  {1-|b|^2 }
{ (1+\langle x, b\rangle)^2} \right)^{\frac {n-1}{4}} \Xi^*\xi \,\Big |\, \xi \in E_0, b\in \mathbb{B}^{n+1}\right\}.
}

 By conformal invariance, in order to prove the local stability it suffices to consider the second variation of $J$ at $-\frac{1}{2}$-Killing spinors.
Let $\xi$ be a fixed $-\frac 12$-Killing spinor.
Without loss of generality, we may normalize $\abs{\xi}=1$.
Given $\xi$, the following spinor field plays a special role
\eq{\label{Phi}
\Phi_\xi \coloneqq  \Phi_\xi (f) \coloneqq (n-1) f \xi + \rd f \cdot \xi,
}
for $f\in P_1$. One can check easily 
\eq{
\D \Phi_\xi = \frac{n}{2}\left( (n+1)f\xi+\rd f\cdot\xi\right).
}
 Set
\eq{
Q\coloneqq Q_\xi\coloneqq   \{\Phi_\xi(f)|\, f\in P_1\} \subset E_1\oplus E_{-1}. 
}

Now we can determine the tangent space $T_\xi\mathcal {M}$.
\begin{lemma}\label{tangent_space}   At a $-\frac 12$-Killing spinor $\xi$, the tangent space of $\mathcal M$ is given by
\eq{T_\xi\mathcal{M}=E_0 \oplus Q_\xi.}
\end{lemma}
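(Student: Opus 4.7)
The plan is to prove both inclusions. First I would show $E_0 \oplus Q_\xi \subseteq T_\xi\mathcal{M}$ by exhibiting explicit curves in $\mathcal{M}$ through $\xi$ whose tangents sweep out both summands; then the reverse inclusion will follow from a dimension count based on the parametrization \eqref{explicit_M}.

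For the $E_0$ direction, any $\eta \in E_0$ is tangent at $t=0$ to the curve $t \mapsto \xi + t\eta \subseteq E_0 \subseteq \mathcal{M}$ (taking $b = 0$ in \eqref{explicit_M}). For the $Q_\xi$ direction, given $a \in \R^{n+1}$ I would consider
\eq{\xi_t \coloneqq \left(\tfrac{1-t^2|a|^2}{(1+t\<x,a\>)^2}\right)^{\frac{n-1}{4}} \Xi_{ta}^*\xi \,\in \mathcal{M}.}
Writing $f_a(x) \coloneqq \<x,a\> \in P_1$, Taylor expansion gives $\Xi_{ta}(x) = x + t\,\nabla f_a(x) + O(t^2)$ while the scalar prefactor equals $1 - \tfrac{n-1}{2}t f_a + O(t^2)$. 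Since $V_a \coloneqq \nabla f_a$ is a \emph{gradient} conformal Killing field on $\S^n$, its associated two-form $\rd V_a^\flat = \rd^2 f_a$ vanishes, so the spinorial Lie derivative reduces to the covariant one and $\nabla_X\xi = -\tfrac12 X\cdot\xi$ yields $\mathcal{L}_{V_a}\xi = \nabla_{V_a}\xi = -\tfrac12 \rd f_a \cdot \xi$. Therefore
\eq{\dot\xi_0 = -\tfrac{n-1}{2}f_a\xi - \tfrac12\, \rd f_a \cdot \xi = -\tfrac12 \Phi_\xi(f_a).}
As $a$ ranges over $\R^{n+1}$, the right-hand side spans $Q_\xi$. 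Since $Q_\xi \subseteq E_1 \oplus E_{-1}$ is $L^2$-orthogonal to $E_0$, the inclusion $E_0 \oplus Q_\xi \subseteq T_\xi\mathcal{M}$ is a direct sum.

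For the reverse inclusion, \eqref{explicit_M} exhibits $\mathcal{M}$ as the image of a smooth map $F$ from $E_0 \times \mathbb{B}^{n+1}$; the two computations above show that $\rd F|_{(\xi,0)}$ is injective with image $E_0 + Q_\xi$ (using $E_0 \cap Q_\xi = \{0\}$ together with the injectivity of $\Phi_\xi$ on $P_1$, which follows from $|\Phi_\xi(f)|^2 = (n-1)^2 f^2 + |\rd f|^2$). Consequently $\dim_\R T_\xi\mathcal{M} = \dim_\R E_0 + (n+1) = 2^{[\frac{n}{2}]+1} + (n+1) = \dim_\R(E_0 \oplus Q_\xi)$, forcing equality.

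The main technical point is the spin-bundle bookkeeping underlying the identity $\mathcal{L}_{V_a}\xi = \nabla_{V_a}\xi$: one must verify that the naive spin lift of the conformal diffeomorphism $\Xi_{ta}$ appearing in \eqref{explicit_M}, paired with the scalar factor $u_b^{(n-1)/2}$, produces no additional scalar correction. This is consistent with \eqref{conformal_Dirac}, since the conformal weight is already entirely carried by $u_b^{(n-1)/2}$, so the Kosmann-type formula reduces to the plain covariant derivative when the two-form part $\rd V_a^\flat$ vanishes. Once this is accepted, the remainder is elementary Taylor expansion together with the Killing spinor equation.
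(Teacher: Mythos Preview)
Your proof is correct and follows essentially the same approach as the paper: both differentiate the conformal family \eqref{explicit_M} along $b(t)=ta$ at $t=0$ and identify the resulting tangent vector as $-\tfrac{n-1}{2}f_a\xi-\tfrac12\,\rd f_a\cdot\xi\in Q_\xi$, while the $E_0$ directions come from varying the Killing spinor itself. Your write-up is considerably more detailed than the paper's sketch---you make the Kosmann Lie-derivative reduction explicit via $dV_a^\flat=0$ and add a clean dimension count for the reverse inclusion, both of which the paper leaves implicit.
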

\begin{proof}
  At a $-\frac 12$-Killing spinor $\xi$, the tangent space
$T_\xi \mathcal{M}$ is spanned
by
\eq{\label{vectors}
-\frac {n-1} 2 x_i \xi -  \frac 12 dx_i \cdot \xi =
-\frac {n-1} 2 x_i \xi - \frac 12 (e_i -\langle e_i , x\rangle x)\cdot \xi,
}
together with the space of all $-\frac 12$-Killing spinors $E_0$.
The proof follows from choosing a variation of $\xi$ with $b(t)=t e_i$. It is clear that the set of all directions given by \eqref{vectors} is just $Q_\xi$.
\end{proof}

It is clear that elements in $Q_\xi$ have the following decomposition
\eq{
(n-1) f \xi +\rd f \cdot \xi= \frac{n}{n+1}(nf \xi +
\rd f\cdot \xi)+\frac{1}{n+1}(-f\xi +\rd f \cdot \xi) \in E_1\oplus E_{-1}.
}

\begin{proposition}\label{second_variation}
    The (formal) second variation of $J$ on standard $\S^n$ at $\xi\in E_{0}$ (with normalization $|\xi|=1$) is given by
    \eq{
        \frac{\rd^2}{\rd t^2}\Big|_{t=0}J(\xi+t\varphi)&=2\omega_{n}^{\frac{1-n}{n}}\Bigg\{ \frac{2}{n}\int\abs{\D\varphi}^2
        -\frac{4}{n(n+1)}\int\<\xi,\D\varphi\>^2 -\int\<\D\varphi,\varphi\>  
        +\frac{n\omega_{n}^{-1}}{n+1}\left(\int\<\xi,\varphi\>\right)^2\Bigg\}\\
        &\eqcolon 2\omega_{n}^{\frac{1-n}{n}}S(\varphi).
    }
\end{proposition}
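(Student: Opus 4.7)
The plan is to expand $J(\xi+t\varphi)$ directly in $t$ to second order at $\xi$. I set $A(\psi)\coloneqq\int|\D\psi|^{p}$ with $p=\tfrac{2n}{n+1}$ and $B(\psi)\coloneqq\int\langle\D\psi,\psi\rangle$, so that $J=A^{(n+1)/n}/B$. The crucial simplification at $\xi$ is that $\D\xi=\tfrac n2\xi$ and $|\xi|\equiv 1$, hence $|\D\xi|\equiv\tfrac n2$ is a positive constant; this makes the nonsmooth factor $|{\cdot}|^{p}$ (with $p<2$) perfectly smooth along rays out of $\xi$, so the pointwise Taylor expansion is legitimate.

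Next I would compute the first two variations of numerator and denominator separately. For $B$, which is quadratic, self-adjointness of $\D$ together with $\D\xi=\tfrac n2\xi$ immediately yields $B(\xi)=\tfrac n2\omega_{n}$, $B'(\xi)[\varphi]=n\int\langle\xi,\varphi\rangle$, and $B''(\xi)[\varphi,\varphi]=2\int\langle\D\varphi,\varphi\rangle$. For $A$, differentiating $|\D\psi|^{p}=\langle\D\psi,\D\psi\rangle^{p/2}$ twice and using $\langle\D\varphi,\D\xi\rangle=\tfrac n2\langle\D\varphi,\xi\rangle$ produces
\[ A'(\xi)[\varphi]=p\bigl(\tfrac n2\bigr)^{p-1}\!\!\int\langle\D\varphi,\xi\rangle,\quad A''(\xi)[\varphi,\varphi]=p\bigl(\tfrac n2\bigr)^{p-2}\!\!\int\bigl[(p-2)\langle\D\varphi,\xi\rangle^{2}+|\D\varphi|^{2}\bigr]. \]

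Then I would combine via logarithmic differentiation. Setting $G(t)\coloneqq\log J(\xi+t\varphi)=\tfrac{n+1}{n}\log A(t)-\log B(t)$, the identity $\int\langle\D\varphi,\xi\rangle=\tfrac n2\int\langle\varphi,\xi\rangle$ shows $G'(0)=0$ (confirming that $\xi$ is indeed a critical point of $J$), so $J''(0)=J(\xi)\,G''(0)$ with
\[ G''(0)=\tfrac{n+1}{n}\Bigl(\tfrac{A''(0)}{A(0)}-\tfrac{A'(0)^{2}}{A(0)^{2}}\Bigr)-\Bigl(\tfrac{B''(0)}{B(0)}-\tfrac{B'(0)^{2}}{B(0)^{2}}\Bigr). \]
Substituting the formulas above, using $p-2=-\tfrac{2}{n+1}$, $\tfrac{(n+1)p}{n}=2$, and $J(\xi)=\tfrac n2\omega_{n}^{1/n}$, the $\bigl(\int\langle\xi,\varphi\rangle\bigr)^{2}$ contributions from $A'^{2}/A^{2}$ and $B'^{2}/B^{2}$ combine cleanly to the last term of $S(\varphi)$, and routine collection of the remaining terms yields $J''(0)=2\omega_{n}^{(1-n)/n}S(\varphi)$.

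There is no conceptual obstacle here: since $|\D\xi|$ is bounded away from zero the integrand is smooth near $\xi$, the critical-point identity eliminating cross terms is just self-adjointness of $\D$, and everything else is bookkeeping with the exponents $p-1,\,p-2$ and the scalars $\tfrac{2n}{n+1},\,(\tfrac n2)^{p-k}$. The one point requiring care is to distinguish the \emph{pointwise} squared quantity $\langle\D\varphi,\xi\rangle^{2}$ (inside the integral, appearing through $A''/A$) from the \emph{integrated} squared quantity $\bigl(\int\langle\xi,\varphi\rangle\bigr)^{2}$ (appearing through $A'^{2}/A^{2}$ and $B'^{2}/B^{2}$); confusing these would spoil the final formula.
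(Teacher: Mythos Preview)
Your proof is correct and follows essentially the same approach as the paper: compute the first and second variations of the numerator and denominator separately at $\xi$, exploit the critical-point identity to kill the cross terms, and combine. The only organizational difference is that you use logarithmic differentiation of $J=A^{(n+1)/n}/B$ (so you work with $A$, $A'$, $A''$ and handle the power $(n+1)/n$ through the log), whereas the paper applies the quotient rule directly to $U/V$ with $U=A^{(n+1)/n}$ and computes $U''$ in one step; both lead to the same second-variation formula after identical bookkeeping.
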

\begin{proof} The proof is elementary. For  completeness we provide it.
    In general, for any functional $J=U/V$, the Euler-Lagrange equation is $U'V-UV'=0$. Hence the second variation at any critical point is
    \eq{
        J'' = \frac{U''V-UV''}{V^2}.
    }
    Here we have
    \eq{
        U(\psi)=\Big(\int\abs{\D\psi}^{\frac{2n}{n+1}}\Big)^{\frac{n+1}{n}},\quad V(\psi)=\int\<\D\psi,\psi\>.
    }
    Computing the second variation formulas of $U$ and $V$ at $\xi\in E_{0}$ we have
    \eq{
        U''(\xi)(\varphi,\varphi) &= \frac{n^2}{n+1}\omega_{n}^{\frac{1-n}{n}}\Big(\int\<\xi,\varphi\>\Big)^2 - \frac{4}{n+1}\omega_{n}^{\frac{1}{n}}\int\<\xi,\D\varphi\>^2 + 2\omega_{n}^{\frac{1}{n}}\int\abs{\D\varphi}^2,\\
        V''(\xi)(\varphi,\varphi) &= 2\int\<\D\varphi,\varphi\>.
    }
    Together with
    \eq{
        U(\xi) = \frac{n^2}{4}\omega_{n}^{\frac{1+n}{n}},\quad V(\xi) = \frac{n}{2}\omega_{n}
    }
    we complete the proof.
\end{proof}

Now we prove the local stability. 

\begin{theorem}\label{local_stability_inequality}
 Let $n\geq 2$. There exist constants $\delta_0>0$ and $c(n)>0$  such that for any 
 $\psi$ with 
 \eq{\inf_{\phi\in\mathcal{M}}\Big(\int\abs{\D(\psi-\phi)}^{\frac{2n}{n+1}}\Big)^{\frac{n+1}{n}}<\delta_0,}
we have 
    \eq{
      {\Big(\int\Abs{\D\psi}^{\frac{2n}{n+1}}\Big)^{\frac{n+1}{n}}-\frac{n}{2}\omega_{n}^{1/n}\int\<\D\psi,\psi\>}\geq c(n){\inf_{\phi\in\mathcal{M}}\Big(\int\abs{\D(\psi-\phi)}^{\frac{2n}{n+1}}\Big)^{\frac{n+1}{n}}}.
    }
\end{theorem}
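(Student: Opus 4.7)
The plan is to follow the Figalli--Zhang scheme from \cite{Figalli_Zhang_20}, reducing the stability to a quantitative coercivity bound for the second variation $S$, and then controlling the nonlinear remainder produced by the fact that $p=\tfrac{2n}{n+1}<2$.

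By the conformal invariance of every integral appearing in the deficit
\[
D(\psi) := \Big(\int|\D\psi|^p\Big)^{\frac{n+1}{n}} - \frac{n}{2}\omega_n^{1/n}\int\langle\D\psi,\psi\rangle,
\]
and of $\int|\D(\psi-\phi)|^p$, one may assume (after attaining the infimum over $\mathcal{M}$, which for $\delta_0$ small follows from a standard concentration-compactness argument on the ball $\mathbb{B}^{n+1}\times E_0$ parametrizing $\mathcal{M}$) that the closest $\phi\in\mathcal{M}$ is a normalized $-\tfrac12$-Killing spinor $\xi$ with $|\xi|\equiv 1$. Writing $\psi=\xi+\varphi$, the first-order optimality of $\xi$ in the distance to $\psi$ yields the $L^2$-orthogonality $\varphi\perp T_\xi\mathcal{M}=E_0\oplus Q_\xi$. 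Since $D(\xi)=0$ and $D'(\xi)=0$, a Taylor expansion gives
\[
D(\xi+\varphi)=\tfrac12 D''(\xi)(\varphi,\varphi)+R(\varphi),
\]
where $D''(\xi)(\varphi,\varphi)$ coincides, up to a positive dimensional constant, with the quadratic form $S(\varphi)$ of Proposition \ref{second_variation}.

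The key new step is a uniform coercivity bound $S(\varphi)\geq c(n)\int|\D\varphi|^2$ on $T_\xi\mathcal{M}^\perp$. I decompose $\varphi=\sum_{k\geq1}\varphi^{(k)}$ along the spectral spaces $F_k=E_k\oplus E_{-k}$. The orthogonality \eqref{decomposition} of Proposition \ref{thm2} annihilates all cross-spectral contributions $\int\langle\xi,\varphi^{(j)}\rangle\langle\xi,\varphi^{(k)}\rangle$ for $j\neq k$, so $S$ splits as a direct sum $\sum_k S|_{F_k}(\varphi^{(k)})$ and it suffices to bound each piece. For $k\geq 3$, the crude Cauchy--Schwarz estimate $\int\langle\xi,\D\varphi^{(k)}\rangle^2\leq\int|\D\varphi^{(k)}|^2$ already yields the bound with a $k$-independent constant. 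For $k=2$, I invoke the sharp inequality \eqref{critical_estimate} of Proposition \ref{thm2}, which produces a strict spectral gap because the equality case in $E_{\pm 2}$ lies outside $T_\xi\mathcal{M}$. For $k=1$ the orthogonality $\varphi^{(1)}\perp Q_\xi$, combined with the classification of the equality case of \eqref{critical_estimate}, forces $S|_{F_1}(\varphi^{(1)})\geq c\int|\D\varphi^{(1)}|^2$; this is the subtlest case, since $Q_\xi$ saturates the eigenvalue estimate and precisely coincides with the kernel of $S|_{F_1}$.

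The main technical step is controlling $R(\varphi)$ in a $W^{1,p}$-quantitative way. Because $t\mapsto|t|^p$ is only $C^{1,p-1}$ for $p<2$, no pointwise second-order Taylor expansion of $(\int|\D\xi+\D\varphi|^p)^{2/p}$ is available. Following \cite{Figalli_Zhang_20}, I split $\S^n$ into a good region $\{|\D\varphi|\ll|\D\xi|=\tfrac n2\}$, on which a pointwise expansion with error $O(|\D\varphi|^3|\D\xi|^{p-3})$ is legitimate, and a bad region $\{|\D\varphi|\gtrsim|\D\xi|\}$, on which the deficit is directly bounded below by $|\D\varphi|^p$. Together with the smallness $\|\varphi\|_{W^{1,p}}<\delta_0$ and Sobolev embedding, these estimates convert the $L^2$-coercivity of the quadratic part into the $W^{1,p}$-coercivity stated in the theorem. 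I expect the main obstacle to be precisely this step: faithfully transcribing the Figalli--Zhang good/bad region splitting and the attendant weighted $L^p$-bounds to the spinor setting, ensuring that the Clifford multiplication and the imaginary part in $\langle\D\varphi,\varphi\rangle$ do not spoil the sign conventions on the good region. The spectral coercivity, by contrast, is essentially algebraic once Proposition \ref{thm2} is in hand. Assembling the two yields
\[
D(\xi+\varphi)\geq c(n)\Big(\int|\D\varphi|^p\Big)^{\frac{n+1}{n}}
\]
for $\|\varphi\|_{W^{1,p}}<\delta_0$, which is the local stability estimate.
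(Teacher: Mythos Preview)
Your outline matches the paper's approach: the spectral gap on $T_\xi\mathcal M^\perp$ via the $F_k$-decomposition and Proposition~\ref{thm2}, followed by the Figalli--Zhang machinery for $p<2$. Two points need correction.

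First, the first-order optimality of $\xi$ for the $W^{1,p}$-distance does \emph{not} yield the $L^2$-orthogonality $\varphi\perp T_\xi\mathcal M$ that the spectral-gap argument requires; it only gives $\int|\D\varphi|^{p-2}\langle\D\varphi,\D\eta\rangle=0$ for $\eta\in T_\xi\mathcal M$. The paper handles this with a separate step (its Lemma~\ref{lem:orthogonal}, the analogue of \cite{Figalli_Zhang_20}*{Lemma~4.1}): an implicit-function-type argument produces, for $\psi$ close to $\mathcal M$, a point $\xi_\psi\in\mathcal M$ with genuine $L^2$-orthogonality $\psi-\xi_\psi\perp T_{\xi_\psi}\mathcal M$, and one proves the deficit bound at $\xi_\psi$ (which suffices since $\inf_\phi\le\|\D(\psi-\xi_\psi)\|_p$).

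Second, the paper does not isolate a remainder $R(\varphi)$ after a formal Taylor expansion and then bound it on a good region by $O(|\D\varphi|^3|\D\xi|^{p-3})$. It applies the Figalli--Zhang pointwise lower bound for $|x+y|^p$ directly (Corollary~\ref{appendix_cor}), which already packages the second-order term, the $(p-2)|w|^{p-2}(\cdot)^2$ correction, and a gain term $\min\{|\D\varphi|^p,|\D\xi|^{p-2}|\D\varphi|^2\}$. The passage from the $L^2$ spectral gap to the nonlinear $W^{1,p}$ estimate is then a contradiction--compactness argument (Lemma~\ref{appendix_lem}): one rescales a putative bad sequence by $\epsilon_i=(\int(\tfrac n2 a+|\D\varphi_i|)^{p-2}|\D\varphi_i|^2)^{1/2}$, extracts a weak $W^{1,2}$ limit, and uses Fatou's lemma to contradict Theorem~\ref{spectral_gap}. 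Your good/bad-region sketch is the right intuition, but the actual mechanism is this rescaled-limit argument rather than a direct error estimate.
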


\begin{proof}
    By conformal invariance, it suffices to consider $\psi$ near a $-\frac{1}{2}$-Killing spinor. The proof  of this Theorem relies on the following spectral gap Theorem \ref{spectral_gap} below and also on the method given in \cite{Figalli_Zhang_20} that we will sketch in our setting in Appendix A for the convenience of the reader.
\end{proof}

\begin{theorem}\label{spectral_gap}
    For any $\xi\in E_0$ with $\abs{\xi}=1$, there exists $c(n)>0$, such that for any $\varphi \in W^{1,2}$ with  $\varphi\in T_\xi \mathcal{M}^\perp= (E_{0}\oplus Q_\xi)^{\perp}$ we have
    \eq{
        \frac{2}{n}\int\abs{\D\varphi}^2
        -\frac{4}{n(n+1)}\int\<\xi,\D\varphi\>^2 -\int\<\D\varphi,\varphi\>  
        \geq c(n)\int\abs{\D\varphi}^2.
    }
\end{theorem}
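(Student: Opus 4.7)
The plan is to expand $\varphi$ in the Dirac eigenspaces and reduce $S$ to a sum of independent two-variable quadratic forms, one per eigenmode. Since $\varphi\perp E_0$, write $\varphi=\sum_{k\ge 1}(\varphi_k^++\varphi_k^-)$ with $\varphi_k^\pm\in E_{\pm k}$, $\varphi_k\coloneqq\varphi_k^++\varphi_k^-$, $a_k\coloneqq\|\varphi_k^+\|_{L^2}^2$, $b_k\coloneqq\|\varphi_k^-\|_{L^2}^2$. Orthogonality of Dirac eigenspaces splits $\int|\D\varphi|^2$ and $\int\langle\D\varphi,\varphi\rangle$ as sums over $k$; Corollary \ref{another_eigenfunction_2}, placing $\langle\xi,\varphi_k^\pm\rangle\in P_k$, does the same for $\int\langle\xi,\D\varphi\rangle^2$, since spherical harmonics of different degree are $L^2$-orthogonal. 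One checks $S(\varphi)=\sum_k S_k(\varphi_k)$ with
\[
S_k(\varphi_k)=\frac{2k(\tfrac n2+k)}{n}a_k+\frac{2(n+k-1)(\tfrac n2+k-1)}{n}b_k-\frac{4}{n(n+1)}\int\langle\xi,\D\varphi_k\rangle^2,
\]
so it suffices to prove $S_k\ge c(n)\int|\D\varphi_k|^2$ uniformly in $k$.

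For $k\ge 3$, the pointwise bound $\langle\xi,\D\varphi_k\rangle^2\le|\D\varphi_k|^2$ (using $|\xi|=1$) suffices: substituting it, the coefficients of $a_k$ and $b_k$ become proportional to $k(n-1)-n>0$ and $(n-1)(n+k)+1>0$ respectively, each uniformly comparable to $(\tfrac n2+k)^2$ and $(\tfrac n2+k-1)^2$ with constant depending only on $n$.

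The critical cases $k=1,2$ require Proposition \ref{crossing_estimate}, since pointwise Cauchy-Schwarz degenerates (the coefficient of $a_k$ vanishes at $n=2$, $k=2$ and is plainly insufficient for $k=1$). Expanding $\int\langle\xi,\D\varphi_k\rangle^2$ diagonally, applying the sharp Proposition \ref{crossing_estimate} bounds to the pure $\varphi_k^\pm$ parts, and Cauchy-Schwarz to the cross term $\int\langle\xi,\varphi_k^+\rangle\langle\xi,\varphi_k^-\rangle$ gives
\[
\int\langle\xi,\D\varphi_k\rangle^2\le\frac{1}{n+2k-1}\bigl[(\tfrac n2+k)\sqrt{n+k-1}\,\sqrt{a_k}+(\tfrac n2+k-1)\sqrt{k}\,\sqrt{b_k}\bigr]^2.
\]
Substituting, the resulting lower bound on $S_k$ is a quadratic form in $(\sqrt{a_k},\sqrt{b_k})$; a direct discriminant computation shows strict positive definiteness for $k=2$, while for $k=1$ it collapses to the perfect square
\[
S_1(\varphi_1)\ge\frac{n+2}{(n+1)^2}\Bigl(\frac{\sqrt{a_1}}{\sqrt n}-n\sqrt{b_1}\Bigr)^2\ge 0.
\]

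The main obstacle is upgrading $S_1\ge 0$ to a coercive bound under the hypothesis $\varphi\perp Q_\xi$. Tracing all equalities: saturation of the perfect square forces $a_1=n^3b_1$; saturation of Cauchy-Schwarz on the cross term forces $\langle\xi,\varphi_1^+\rangle$ and $\langle\xi,\varphi_1^-\rangle$ to be negatively proportional; and the equality cases of Proposition \ref{crossing_estimate} force $\varphi_1^+=nf\xi+df\cdot\xi$ and $\varphi_1^-=-g\xi+dg\cdot\xi$ for some $f,g\in P_1$. A short calculation --- using $\|df\|_{L^2}^2=n\|f\|_{L^2}^2$ for $f\in P_1$ and the identity $\langle\xi,X\cdot\xi\rangle=0$ coming from the anti-symmetry of Clifford multiplication --- then reduces these constraints to $f=ng$, so that $\varphi_1=\varphi_1^++\varphi_1^-$ is exactly the $E_1\oplus E_{-1}$ decomposition of $(n-1)(f+g)\xi+d(f+g)\cdot\xi\in Q_\xi$. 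Hence $\varphi\perp Q_\xi$ forbids equality; finite-dimensionality of $F_1$ combined with continuity of $S_1$ then promotes $S_1>0$ to $S_1\ge c(n)\int|\D\varphi_1|^2$, and taking a common $c(n)>0$ across all $k$ finishes the proof.
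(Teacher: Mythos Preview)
Your proof is correct and follows essentially the same strategy as the paper: decompose into the eigenspaces $F_k=E_k\oplus E_{-k}$, handle $k\ge 3$ by pointwise Cauchy--Schwarz, and use Proposition~\ref{crossing_estimate} for the critical cases $k=1,2$, with finite-dimensionality upgrading nonnegativity to coercivity on $F_1\cap Q_\xi^\perp$. The only cosmetic difference is in how the $k=1,2$ quadratic forms are analyzed: the paper keeps the functions $\langle\xi,\varphi_{\pm k}\rangle$ intact, bounds the positive $\int|\varphi_{\pm k}|^2$ terms \emph{from below} via Proposition~\ref{crossing_estimate}, and obtains the perfect square $\int\bigl(\langle\xi,\varphi_1\rangle-n(n+2)\langle\xi,\varphi_{-1}\rangle\bigr)^2$ directly; you instead bound $\int\langle\xi,\D\varphi_k\rangle^2$ \emph{from above} in terms of $\sqrt{a_k},\sqrt{b_k}$, which costs an extra Cauchy--Schwarz on the cross term and hence an extra equality condition to trace, but leads to the same conclusion.
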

\begin{proof}
    For short we denote
    \eq{
        G(\varphi) \coloneqq \frac{2}{n}\int\abs{\D\varphi}^2
        -\frac{4}{n(n+1)}\int\<\xi,\D\varphi\>^2 -\int\<\D\varphi,\varphi\>,
    }
    which is equivalent to $S$ given in the introduction.
    We decompose the space of spinor fields now as following
    \eq{(E_0 \oplus  Q )\oplus (F_{1}\cap Q^{\perp}) \oplus F_2\oplus F_3\cdots.
    }
    By Proposition \ref{another_eigenfunction_2}, we only need to consider $G|_{F_{1}\cap Q^{\perp}}$ and $G|_{F_{k}}$ individually. In fact, we have
    \eq{
        G = G|_{F_{1}\cap Q^{\perp}} + G|_{F_{2}} + G|_{F_{3}} \cdots.
    }
    For $k\geq 3$, using the Cauchy-Schwarz inequality we have for any $\varphi\in F_{k}$
    \eq{
         G(\varphi) &\geq \frac{2}{n}\int\abs{\D\varphi}^2 - \frac{4}{n(n+1)}\int\abs{\D\varphi}^2 - \frac{2}{n+2k}\int\abs{\D\varphi}^2\\
         &\geq \frac{4(2n-3)}{n(n+1)(n+6)}\int\abs{\D\varphi}^2\eqcolon  c_{1}(n)\int\abs{\D\varphi}^2.
    }
    Next we consider $G|_{F_{1}\cap Q^{\perp}}$. Though we know from Lemma \ref{tangent_space} that $G|_{Q}=0$, it is convenient  to consider $G|_{F_1}$ and to show that
    $G|_{F_1} (\psi)=0$  if and only if  $\psi\in Q$.
    
    We decompose any $\varphi\in F_{1}$ by
    \eq{
        \varphi = \frac{2}{n+2}\varphi_{1} - \frac{2}{n}\varphi_{-1}.
    }
    Then
    \eq{
        \D\varphi = \varphi_{1} + \varphi_{-1}.
    }
      Using Proposition \ref{intro_estimate} we have
    \begin{align}
        G(\varphi) &= \frac{2}{n}\int\abs{\D\varphi}^2 - \frac{4}{n(n+1)}\int\<\xi,\D\varphi\>^2 - \int\<\D\varphi,\varphi\>\\
        &= \frac{2}{n}\int(\abs{\varphi_1}^2+\abs{\varphi_{-1}}^2) - \frac{4}{n(n+1)}\int\big(\<\xi,\varphi_{1}\>+\<\xi,\varphi_{-1}\>\big)^2 - \frac{2}{n+2}\int\abs{\varphi_1}^2 + \frac{2}{n}\int\abs{\varphi_{-1}}^2\\
        &= \frac 4{n(n+2)} \int \abs{\varphi_1}^2
       +\frac 4 n \int\abs{\varphi_{-1}}^2 - \frac{4}{n(n+1)}\int\big(\<\xi,\varphi_{1}\>+\<\xi,\varphi_{-1}\>\big)^2
       \\
       &\ge \frac {4(n+1)}{n^2 (n+2)} \int \<\xi,\varphi_1\>^2 +\frac {4(n+1)} n \int \<\xi,\varphi_{-1}\>^2 
        - \frac{4}{n(n+1)}\int\big(\<\xi,\varphi_{1}\>+\<\xi,\varphi_{-1}\>\big)^2\\
        &= \frac {4}{n^2(n+1) (n+2)} \int \<\xi,\varphi_1\>^2 +\frac {4(n+2)}{n+1} \int \<\xi,\varphi_{-1}\>^2 
        - \frac{8}{n(n+1)}\int\<\xi,\varphi_{1}\>\<\xi,\varphi_{-1}\>\\  
        &= \frac 4{n^2(n+1)(n+2)}
        \int \big(\<\xi,\varphi_1\>- n(n+2) \<\xi,\varphi_{-1}\>\big)^2 \ge 0. \label{G_1_1}
    \end{align}
    Again by Proposition \ref{intro_estimate} equality holds if and only if
    \eq{
    \varphi_1= n f\xi +\rd f \cdot \xi, \qquad \varphi_{-1} = -h\xi+\rd h \cdot \xi \label{G_1_2}
    }
    for some $f,h\in P_{1}$ and $\< \xi, \varphi_1 -n(n+2)\varphi_{-1}\>=0$. The latter implies that $f=-(n+2)h$, and hence
    \[
    \varphi =\frac 2{n+2}\varphi_1 -\frac 2 n \varphi _{-1}=-\frac{2(n+1)}{n}((n-1)h\xi+\rd h\cdot\xi)\in Q.
    \]
        Therefore $Q(\varphi)>0$ for $\varphi \in F_1\cap Q^\perp$. 
    Since $Q^{\perp}\cap(E_{1}\oplus E_{-1})$ is a finite-dimensional subspace and $G$ is quadratic,  there exists some $c_{2}(n)>0$ such that
    \eq{
        G(\varphi)\geq c_{2}(n)\int\abs{\D\varphi}^2, \quad \forall \,\varphi \in F_1\cap Q^\perp.
    }

   Finally we consider the case $k=2$. We decompose any $\varphi\in F_{2}$ by
    \eq{
        \varphi = \frac{2}{n+4}\varphi_{2} - \frac{2}{n+2}\varphi_{-2}.
    }
        Then
    \eq{
        \D\varphi = \varphi_{2} + \varphi_{-2}.
    }
    Using Proposition \ref{intro_estimate} we have
    \begin{align}
        G(\varphi) &= \frac{2}{n}\int\abs{\D\varphi}^2 - \frac{4}{n(n+1)}\int\<\xi,\D\varphi\>^2 - \int\<\D\varphi,\varphi\>\\
        &= \frac{2}{n}\int(\abs{\varphi_2}^2+\abs{\varphi_{-2}}^2) - \frac{4}{n(n+1)}\int\big(\<\xi,\varphi_{2}\>+\<\xi,\varphi_{-2}\>\big)^2 - \frac{2}{n+4}\int\abs{\varphi_2}^2 + \frac{2}{n+2}\int\abs{\varphi_{-2}}^2\\
        &= \frac 8{n(n+4)} \int \abs{\varphi_2}^2
       +\frac{4(n+1)}{n(n+2)} \int\abs{\varphi_{-2}}^2 - \frac{4}{n(n+1)}\int\big(\<\xi,\varphi_{1}\>+\<\xi,\varphi_{-1}\>\big)^2
       \\
       &\ge \frac {8(n+3)}{n(n+1)(n+4)} \int \<\xi,\varphi_2\>^2 +\frac{2(n+1)(n+3)}{n(n+2)} \int \<\xi,\varphi_{-2}\>^2 
        - \frac{4}{n(n+1)}\int\big(\<\xi,\varphi_{2}\>+\<\xi,\varphi_{-2}\>\big)^2\\
        &= \frac {4(n+2)}{n(n+1)(n+4)} \int \<\xi,\varphi_2\>^2 +\frac{2(n^3+5n^2+5n-1)}{n(n+1)(n+2)} \int \<\xi,\varphi_{-2}\>^2 
        - \frac{8}{n(n+1)}\int\<\xi,\varphi_{2}\>\<\xi,\varphi_{-2}\>.
    \end{align}
    Now it is elementary to see that $G(\varphi)\geq c_{3}(n)\int\abs{\D\varphi}^2$ for some constant $c_{3}(n)>0$. Finally, let $c(n)\coloneqq \min\{c_{1}(n),c_{2}(n),c_{3}(n)\}>0$ and we complete the proof.
\end{proof}
\begin{remark}
\label{rem_add}
  Since $\lambda_1^+(\D)=\frac{n}{2}$ we have: 
  for any $\xi\in E_0$ with $\abs{\xi}=1$, there exists $c_0>0$, such that for any $\varphi \in W^{1,2}$ with  $\varphi\in (E_{0}\oplus Q_\xi)^{\perp}$
    \eq{
        \frac{2}{n}\int\abs{\D\varphi}^2
        -\frac{4}{n(n+1)}\int\<\xi,\D\varphi\>^2 -\int\<\D\varphi,\varphi\>  
        \geq \frac 2  n c_0 \int \langle\D\varphi, \varphi \rangle .
    }
\end{remark}

Now we prove the global stability, Theorem \ref{global_stability_inequality}.
\begin{proof}[Proof of Theorem \ref{global_stability_inequality}]
    
    We prove by contradiction. Assume it is not true, then there exists a sequence $\{\psi_{i}\}$ such that
    \eq{
        \lim_{i\ra\infty}{\frac{\Big(\int\Abs{\D\psi_{i}}^{\frac{2n}{n+1}}\Big)^{\frac{n+1}{n}}-\frac{n}{2}\omega_{n}^{1/n}\int\<\D\psi_{i},\psi_{i}\>}{\inf_{\phi\in\mathcal{M}}\Big(\int\Abs{\D(\psi_{i}-\phi)}^{\frac{2n}{n+1}}\Big)^{\frac{n+1}{n}}}}=0. \label{assumption}
    }
First of all by homogeneity we may assume the normalization that $\int |\psi_i|^2 =1$ for any $i$.
We have two cases: either

\begin{enumerate}
    \item  $\lim_{i\to \infty} \inf_{\phi\in\mathcal{M}}\Big(\int\Abs{\D(\psi_{i}-\phi)}^{\frac{2n}{n+1}}\Big)^{\frac{n+1}{n}}=0$, or
    \item  $\lim_{i\to \infty} \inf_{\phi\in\mathcal{M}}\Big(\int\Abs{\D(\psi_{i}-\phi)}^{\frac{2n}{n+1}}\Big)^{\frac{n+1}{n}} \not =0$.
\end{enumerate}

Case (1).  After conformal transformations we may assume that there exists $\xi_i\in E_0$ such that $\psi_i-\xi_i$ 
converges to $0$ in $W^{1,\frac{2n} {n+1}}$.  Since $E_0$ is finite-dimensional and $\xi_i $ is bounded from the normalization $\int |\psi_i|^2 =1$, $\xi_i$ (sub-)converges to $\xi\in E_0$. It follows that
$\phi_i$ (sub-)converges to $\xi$ in $W^{1,\frac{2n} {n+1}}$, which implies that  \eqref{assumption} contradicts  the local stability, Theorem \ref{local_stability_inequality}.

Case (2). In this case, \eqref{assumption} implies that $\psi_i$ is a minimizing sequence, i.e.,
\[ J(\psi_i) \to \frac n 2  \omega_n^{1\slash n}.\]
Now a more or less standard concentration compactness argument implies that after conformal transformations we may assume that $\psi_i$ converges strongly to some $\xi\in E_0$ in $W^{1, \frac {2n}{n+1}}$, which again leads to a contradiction. \end{proof}


\section{The second spinorial Sobolev inequality}

In this section we study another spinorial Sobolev inequality
\eq{\label{eq5.1}
    F(\psi)\coloneqq \frac{\big( \int \abs{ \D\psi }^{\frac{2n}{n+1}} \big)^{\frac{n+1}{n}} }{\big( \int \abs{\psi}^{\frac{2n}{n-1}} \big)^{\frac{n-1}{n}} } \ge C_2>0, \quad \forall \,\psi\not \equiv 0.
}
The Euler-Lagrange equation of $F$ is formally
\eq{\label{eq5.2}
    \D\left( \abs{\D\psi}^{-\frac{2}{n+1}} \D\psi \right) =    \tilde \mu \abs{\psi}^{\frac{2}{n-1}} \psi,
}
for some constant $\tilde \mu >0$.
It is easy to check that
all elements in $\mathcal M$ are solutions of \eqref{eq5.2}. In fact it admits a larger set of solutions. We first need  the following Proposition.

\begin{proposition}\label{strange_direction}
Given any fixed $\xi\in E_{0}$. Let $Q_{k}=Q_{k}(\xi)=\{Ah\xi+\rd h\cdot \xi | h\in P_{k}\}$ be the subspace of $F_{k}=E_{k}\oplus E_{-k}$ with constant $A\neq -k$, and $Q_{-k}=Q_{-k}(\xi)=\{Bh\xi+\rd h\cdot \xi | h\in P_{k}\}$ be the subspace of $F_{k}$ with constant $B\neq n+k-1$.
     Then
    
    (1) for any $\varphi\in E_{k}$,  $\varphi\in E_{k} \cap Q_{k}^{\perp}$ if and only if $\<\xi,\varphi\>=0$;

    (2) for any $\varphi\in E_{-k}$, $\varphi\in E_{-k}\cap Q_{-k}^{\perp}$ if and only if $\<\xi,\varphi\>=0$.
\end{proposition}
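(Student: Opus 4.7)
The plan is to reduce the orthogonality condition against $Q_k$ (respectively $Q_{-k}$) to a single integral of the form $\int h\,\langle\xi,\varphi\rangle$ for $h$ ranging over $P_k$, and then invoke Corollary~\ref{another_eigenfunction_2} which already tells us that $\langle\xi,\varphi\rangle\in P_k$.

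First I would rewrite the generating elements of $Q_{\pm k}$ using the Killing equation. Since $\xi\in E_0$ satisfies $\D\xi=\tfrac{n}{2}\xi$, for any $h\in P_k$ we have
\eq{
\D(h\xi)=\rd h\cdot\xi+h\,\D\xi=\rd h\cdot\xi+\tfrac{n}{2}h\xi,
}
so that $Ah\xi+\rd h\cdot\xi=(A-\tfrac{n}{2})h\xi+\D(h\xi)$, and analogously for $Bh\xi+\rd h\cdot\xi$. This little rearrangement is the key trick: it replaces the Clifford multiplication by $\rd h$ with the (self-adjoint) Dirac operator applied to $h\xi$, which can then be moved onto $\varphi$.

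Next, I would pair with $\varphi$ and use self-adjointness of $\D$ together with the eigenvalue equations $\D\varphi=(\tfrac{n}{2}+k)\varphi$ for $\varphi\in E_k$, or $\D\varphi=-(\tfrac{n}{2}+k-1)\varphi$ for $\varphi\in E_{-k}$. In the first case this yields
\eq{
\int\<\varphi,Ah\xi+\rd h\cdot\xi\>=(A+k)\int h\,\<\xi,\varphi\>,
}
and in the second case
\eq{
\int\<\varphi,Bh\xi+\rd h\cdot\xi\>=\bigl(B-(n+k-1)\bigr)\int h\,\<\xi,\varphi\>.
}
Here the hypotheses $A\ne -k$ and $B\ne n+k-1$ ensure that the scalar prefactor is nonzero, so orthogonality of $\varphi$ to $Q_k$ (respectively $Q_{-k}$) is equivalent to $\int h\,\<\xi,\varphi\>=0$ for every $h\in P_k$.

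Finally, by Corollary~\ref{another_eigenfunction_2} the function $\<\xi,\varphi\>$ already lies in $P_k$, so the vanishing of $\int h\,\<\xi,\varphi\>$ against every $h\in P_k$ is equivalent to $\<\xi,\varphi\>\equiv 0$, completing both implications. There is no serious obstacle here; the only thing to watch is to verify that the two excluded constants $A=-k$ and $B=n+k-1$ are precisely the ones that would make $Q_{\pm k}$ coincide with $E_{\mp k}$ and thus make the pairing degenerate, which matches exactly the prefactors $(A+k)$ and $(B-(n+k-1))$ appearing above.
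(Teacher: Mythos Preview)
Your proof is correct and, in fact, cleaner than the paper's. The paper expands $\varphi=\sum_{i,\alpha}c_{i,\alpha}h_i\xi_\alpha$ in the basis $\{h_i\xi_\alpha\}$, invokes Lemma~\ref{new_lemma} to rewrite $\varphi$ as $\frac{1}{k}\sum c_{i,\alpha}\,\rd h_i\cdot\xi_\alpha$, and then computes the cross term $\int\<\rd h_i\cdot\xi_\alpha,\rd h_j\cdot\xi_1\>$ explicitly via \eqref{Dirac_operator}, eventually arriving at $\frac{n+k-1}{n-1}(A+k)\,\rRe(c_{j,1})$ and concluding that all $\rRe(c_{j,1})$ vanish. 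Your route bypasses the basis expansion entirely: rewriting $Ah\xi+\rd h\cdot\xi=(A-\tfrac{n}{2})h\xi+\D(h\xi)$ and moving $\D$ onto $\varphi$ by self-adjointness is exactly the shortcut that collapses the whole computation to a single line, giving the factor $(A+k)$ (respectively $B-(n+k-1)$) directly. Both approaches rely on Corollary~\ref{another_eigenfunction_2} at the end to identify $\<\xi,\varphi\>$ as an element of $P_k$, so the final step is the same; your argument simply reaches it faster and without coordinates.
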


\begin{proof}
    (1) For any $\varphi\in F_{k}$,  we write
\eq{
    \varphi=\sum_{i,\alpha}c_{i,\alpha}h_{i}\xi_{\alpha},
}
where $c_{i,\alpha}\in\mathbb{C}$ and $\{h_{i}\}$ is an $L^2$-orthogonal basis of $P_{k}$. 
From  Lemma \ref{new_lemma} we know  that $\varphi\in E_k$ if and only if 
    \eq{
        \varphi = \sum_{i,\alpha} c_{i,\alpha}h_{i}\xi_{\alpha} = \frac{1}{k} \sum_{i,\alpha} c_{i,\alpha}\rd h_{i}\cdot\xi_{\alpha}.}
Without loss of generality, we may assume $\xi=\xi_{1}$. Thus, for any $\varphi\in E_{k}\cap Q_{k}^{\perp}$ we have
\begin{align}
    0 &= \int \<\varphi,Ah_{j}\xi_{1}+\rd h_{j}\cdot \xi_{1}\> \\
    &= \int  \<\sum_{i,\alpha} c_{i,\alpha}h_{i}\xi_{\alpha}, Ah_j \xi_1 \> + \int \< \frac{1}{k} \sum_{i,\alpha} c_{i,\alpha}\rd h_{i}\cdot\xi_{\alpha}, \rd h_j \cdot \xi_1 \>\\
    & = A \rRe(c_{j,1}) + \frac{1}{k} \sum_{i,\alpha} \rRe(c_{i,\alpha}) \int \< \rd h_{i}\cdot\xi_{\alpha}, \rd h_j \cdot \xi_1 \>,   \quad \forall \,j, \label{fix_1}
\end{align}
where
\eq{
\int \< \rd h_{i}\cdot\xi_{\alpha}, \rd h_j \cdot \xi_1 \> &= \int \< \D(h_i\xi_\alpha)-\frac{n}{2}h_i\xi_\alpha, \rd h_j\cdot\xi_1\>\\
&= \int \< h_i\xi_\alpha, \D(\rd h_j\cdot\xi_1) \> - \frac{n}{2} \int \< h_i\xi_\alpha, \rd h_j\cdot\xi_1\>\\
&= \int \< h_i\xi_\alpha, k(n+k-1)h_j\xi_1 - \frac{n-2}{2}\rd h_j\cdot\xi_1 \> - \frac{n}{2} \int \< h_i\xi_\alpha, \rd h_j\cdot\xi_1\>\\
&= k(n+k-1) \delta_{ij}\delta_{\alpha 1} - (n-1) \int \< h_i\xi_\alpha, \rd h_j\cdot\xi_1\>.
}
Together with \eqref{fix_1} we have
\eq{
\int \< \varphi, \rd h_j\cdot\xi_1\> = \int \< \sum_{i,\alpha} c_{i,\alpha}h_{i}\xi_{\alpha}, \rd h_j \cdot \xi_1 \> = \frac{k}{n-1}(A+n+k-1) \rRe(c_{j,1}).
}
Hence 
\eq{
0 = \int \<\varphi,Ah_{j}\xi_{1}+\rd h_{j}\cdot \xi_{1}\> = \Big[ A + \frac{k}{n-1}(A+n+k-1) \Big] \rRe(c_{j,1}) = \frac{n+k-1}{n-1}(A+k) \rRe(c_{j,1}).
}
Since $A+k\not =0$, we have $\rRe(c_{j,1})=0$ for every $j$. In particular, $\<\xi_{1},\varphi\>=\sum_{i}\rRe(c_{i,1})h_{i}=0$.

Conversely, if $\<\xi_{1},\varphi\>=0$, then we have
\eq{
    \rRe(c_{j,1}) = \int \sum_{i}\rRe(c_{i,1})h_{i}h_{j} = \int \<\xi_{1},\varphi\> h_{j} = 0,\quad \forall \,j.
}
Hence $\varphi\in Q_{k}^{\perp}$.

The proof of (2) is the same. We leave to the interested reader.
\end{proof}
In the following proof of Theorem \ref{restate} we only need the following special case.
\begin{corollary}
    \label{coro1}
    For any fixed $\xi\in E_{0}$, let  $Q = Q(\xi) = \{ (n-1) f\xi+\rd f\cdot \xi | f\in P_{1}\}$ as defined in Section 4 and $Q_{-} = Q _{-}(\xi) = \{- \frac {n}{n-1} f\xi+\rd f\cdot \xi | f\in P_{1}\}$.
    Then
   
   (1) for any $\varphi\in E_{1}$, $\varphi\in E_{1}\cap Q^{\perp}$ if and only if $\<\xi,\varphi\>=0$;

   (2) for any $\varphi\in E_{-1}$, $\varphi\in E_{-1}\cap Q_{-}^{\perp}$ if and only if $\<\xi,\varphi\>=0$.
\end{corollary}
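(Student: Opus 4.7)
The plan is to observe that this corollary is precisely the specialization of Proposition \ref{strange_direction} to $k=1$, with the two specific constants $A=n-1$ for part (1) and $B=-\frac{n}{n-1}$ for part (2). Therefore the work consists of a cleanly written verification that the hypotheses of the previous proposition are satisfied, rather than a fresh calculation.

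For part (1), the plan is to invoke Proposition \ref{strange_direction}(1) with $k=1$ and $A=n-1$. The hypothesis there is $A\neq -k$, which here reads $n-1\neq -1$, i.e.\ $A+k=n\neq 0$; this holds for every $n\ge 2$. The proposition then gives immediately that $\varphi\in E_1\cap Q_k^\perp$ iff $\<\xi,\varphi\>=0$, where $Q_k$ is defined with constant $A=n-1$, so $Q_k = \{(n-1)f\xi + \rd f\cdot\xi \,|\, f\in P_1\} = Q$. This is the claimed equivalence.

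For part (2), the plan is similarly to invoke Proposition \ref{strange_direction}(2) with $k=1$ and $B=-\tfrac{n}{n-1}$. The hypothesis there is $B\neq n+k-1 = n$; since $n\ge 2$ we have $-\tfrac{n}{n-1}<0<n$, so the hypothesis holds. The proposition then yields that $\varphi\in E_{-1}\cap Q_{-k}^\perp$ iff $\<\xi,\varphi\>=0$, where the relevant subspace
\eq{Q_{-k} = \Big\{-\tfrac{n}{n-1}f\xi + \rd f\cdot\xi \,\Big|\, f\in P_1\Big\} = Q_-,}
as defined in the corollary.

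There is no genuine obstacle here, since the proof of Proposition \ref{strange_direction} already covers the algebra. The only thing to double-check during write-up is that the two constants chosen in the definitions of $Q$ and $Q_-$ indeed avoid the forbidden values $-k$ and $n+k-1$, respectively; both checks are elementary inequalities and are verified above.
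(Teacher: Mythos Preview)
Your proposal is correct and matches the paper's approach: the paper states this result as an immediate corollary of Proposition~\ref{strange_direction} without giving a separate proof, and your write-up simply makes explicit the specialization to $k=1$ with $A=n-1$ and $B=-\tfrac{n}{n-1}$, verifying that the excluded values $A=-k$ and $B=n+k-1$ are avoided.
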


\begin{proposition}
    \label{lemma5.1}
For $-\frac 12$-Killing spinor $\xi\in E_0$ and any $\frac 12$-Killing spinor $\varphi_{-1} \in E_{-1} \cap Q_{-}(\xi) ^\perp $,  $\psi=\xi+\varphi_{-1}$ is a solution of \eqref{eq5.2}.
\end{proposition}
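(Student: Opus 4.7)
The plan is to exploit the fact that the hypothesis $\varphi_{-1}\in E_{-1}\cap Q_{-}(\xi)^\perp$ forces the pointwise orthogonality $\<\xi,\varphi_{-1}\>\equiv 0$ on $\S^n$, via Corollary \ref{coro1}. This pointwise identity, together with the well-known fact that Killing spinors have constant length, should collapse all the nonlinearities in \eqref{eq5.2} into constants.

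First I would compute $\D\psi$ directly. Since $\xi\in E_0$ satisfies $\D\xi=\tfrac{n}{2}\xi$ and $\varphi_{-1}\in E_{-1}$ satisfies $\D\varphi_{-1}=-\tfrac{n}{2}\varphi_{-1}$, one obtains $\D\psi=\tfrac{n}{2}(\xi-\varphi_{-1})$, and applying $\D$ once more, $\D^2\psi=\tfrac{n^2}{4}(\xi+\varphi_{-1})=\tfrac{n^2}{4}\psi$. Next I would compute the pointwise norms:
\eq{
|\psi|^2 = |\xi|^2+2\<\xi,\varphi_{-1}\>+|\varphi_{-1}|^2, \qquad
|\D\psi|^2 = \tfrac{n^2}{4}\bigl(|\xi|^2-2\<\xi,\varphi_{-1}\>+|\varphi_{-1}|^2\bigr).
}
By Corollary \ref{coro1} the cross term $\<\xi,\varphi_{-1}\>$ vanishes identically. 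Since $\xi$ and $\varphi_{-1}$ are both Killing spinors, the standard fact $X|\xi|^2=2\alpha\<X\cdot\xi,\xi\>=0$ (by antisymmetry of Clifford multiplication) shows $|\xi|$ and $|\varphi_{-1}|$ are constants. Consequently both $|\psi|^2$ and $|\D\psi|^2$ are \emph{constant} on $\S^n$, and one has the pointwise relation $|\D\psi|=\tfrac{n}{2}|\psi|$.

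With these computations in hand, the verification of \eqref{eq5.2} is essentially automatic: since $|\D\psi|$ is constant, it passes through the outer Dirac operator, giving
\eq{
\D\bigl(|\D\psi|^{-\frac{2}{n+1}}\D\psi\bigr) = |\D\psi|^{-\frac{2}{n+1}}\D^2\psi = \tfrac{n^2}{4}|\D\psi|^{-\frac{2}{n+1}}\psi.
}
Using $|\D\psi|=\tfrac{n}{2}|\psi|$ and the constancy of $|\psi|$, the right-hand side becomes $\tilde\mu\,|\psi|^{\frac{2}{n-1}}\psi$ for the explicit positive constant $\tilde\mu = \tfrac{n^2}{4}(n/2)^{-\frac{2}{n+1}}|\psi|^{-\frac{2}{n+1}-\frac{2}{n-1}}$, which is precisely \eqref{eq5.2}.

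I do not anticipate any serious obstacle; the only nontrivial input is the pointwise vanishing of $\<\xi,\varphi_{-1}\>$, which is exactly what Corollary \ref{coro1} supplies, and the constancy of $|\xi|$, $|\varphi_{-1}|$, which is a classical property of Killing spinors. Once these are in place the proof is a few lines of algebra.
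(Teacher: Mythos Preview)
Your proposal is correct and follows essentially the same approach as the paper's proof: both use Corollary \ref{coro1} to obtain the pointwise vanishing $\<\xi,\varphi_{-1}\>\equiv 0$, then the constancy of Killing-spinor lengths to conclude that $|\psi|$ and $|\D\psi|$ are constant, from which \eqref{eq5.2} follows immediately. You simply spell out the final verification (computing $\D^2\psi=\tfrac{n^2}{4}\psi$ and identifying $\tilde\mu$) more explicitly than the paper does.
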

\begin{proof} We have $\D\psi=\frac n 2(\xi-\varphi_{-1})$.  Corollary \ref{coro1} implies that $\< \xi, \varphi_{-1}\>=0$, which in turn implies that
$\abs{\xi\pm \varphi_{-1}}^2=\abs{\xi}^2+\abs{\varphi_{-1}}^2$ is constant. Similarly,
$|\D (\xi + \varphi_{-1}) |^2 =|\frac n 2 (\xi -\varphi_{-1})|^2$ is also constant. Now it is easy to show the conclusion.
    
\end{proof}
\begin{remark} \label{rem5.4}
    Since the functional $F$ is conformally invariant and also invariant under the orientation change,  solutions in the previous Proposition under both transformations are also solutions. We denote the set of all such solutions by $\widetilde {\mathcal M}$.
This set of solutions is equivalent to the one given in \cite{FL2} on $\R^n$, which will be discussed in Appendix C.
\end{remark}

From the discussion above it sounds very natural to conjecture that $-\frac{1}{2}$-Killing spinors are optimizers of $F$, i.e.
\[
F(\psi) \ge \frac {n^2} 4 \omega_n^{2\slash n},\quad \forall \,\psi\not \equiv 0,
\]
as in \cites{FL1, FL2, FL3}. Unfortunately, it is not true. Now we give examples to show that
$F$
has infimum strictly smaller than $\frac{n^2}{4}\omega_n^{2/n}$.  In fact, we have
\begin{proposition}
For any $0\not \equiv\varphi_{-1} \in E_{-1}$,
we have
\eq{
F(\xi+\varphi_{-1}) \le \frac {n^2} 4 \omega_n^{2\slash n}
}
with equality if and only if 
\eq{
\varphi_{-1} \in E_{-1}\cap Q_{-}^\perp,
}
where $Q_{-}$ is given in Corollary \ref{coro1}, i.e., $Q_{-}=\{-\frac {n}{n-1} f \xi +\rd f \cdot\xi\,|\, f\in P_1\}$. In particular $$F(\xi+\varphi_{-1} )< \frac {n^2} 4 \omega_n^{2\slash n}, \quad \forall \varphi _{-1}\in {\rm proj}_{E_{-1}}(Q_{-}).$$

\end{proposition}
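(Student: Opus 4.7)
The plan is to exploit the fact that $\xi$ and $\varphi_{-1}$ are eigenspinors of $\D$ with eigenvalues $\tfrac{n}{2}$ and $-\tfrac{n}{2}$ respectively, so that $\D(\xi+\varphi_{-1}) = \tfrac{n}{2}(\xi-\varphi_{-1})$. Moreover, since every element of $E_{-1}$ is a $\tfrac{1}{2}$-Killing spinor, its pointwise norm squared $|\varphi_{-1}|^2 = C$ is a constant (sums of Killing spinors with the same Killing number are again Killing). Setting $g \coloneqq \langle\xi,\varphi_{-1}\rangle$ and using $|\xi|=1$, one obtains the pointwise identities $|\xi+\varphi_{-1}|^2 = 1+C+2g$ and $|\D(\xi+\varphi_{-1})|^2 = \tfrac{n^2}{4}(1+C-2g)$. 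By the $L^2$-orthogonality of eigenspinors for distinct eigenvalues of the self-adjoint operator $\D$, $\int g = 0$, so the two nonnegative functions $h_\pm \coloneqq 1+C\pm 2g$ both have total integral $(1+C)\omega_n$.

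Substituting into the definition of $F$ reduces the claim to showing
\[
\frac{\bigl(\int h_-^{n/(n+1)}\bigr)^{(n+1)/n}}{\bigl(\int h_+^{n/(n-1)}\bigr)^{(n-1)/n}} \le \omega_n^{2/n}.
\]
This follows from a double application of Jensen's inequality with respect to the probability measure $\omega_n^{-1}\rd\rvol$ on $\S^n$. Since $t\mapsto t^{n/(n+1)}$ is concave, one obtains $\bigl(\int h_-^{n/(n+1)}\bigr)^{(n+1)/n} \le \omega_n^{(n+1)/n}(1+C)$; since $t\mapsto t^{n/(n-1)}$ is convex, one obtains the reverse estimate $\bigl(\int h_+^{n/(n-1)}\bigr)^{(n-1)/n} \ge \omega_n^{(n-1)/n}(1+C)$. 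Dividing cancels the common $(1+C)$ factor and yields $\omega_n^{2/n}$, whence $F(\xi+\varphi_{-1}) \le \tfrac{n^2}{4}\omega_n^{2/n}$.

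Equality in Jensen forces $h_\pm$ to be a.e.\ constant, i.e.\ $g$ constant; combined with $\int g = 0$ this forces $g\equiv 0$. By Corollary \ref{coro1}(2), the condition $\langle\xi,\varphi_{-1}\rangle \equiv 0$ is exactly $\varphi_{-1}\in E_{-1}\cap Q_{-}^\perp$, which gives the equality characterization.

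For the final strict-inequality assertion, I would identify $\mathrm{proj}_{E_{-1}}(Q_{-})$ explicitly: decomposing a generic element $-\tfrac{n}{n-1}f\xi + \rd f\cdot\xi \in Q_{-}\subset F_1 = E_1 \oplus E_{-1}$ into its $E_1$ and $E_{-1}$ components via Proposition \ref{classification} shows that $\mathrm{proj}_{E_{-1}}(Q_{-})$ consists of spinors of the form $-f\xi + \rd f\cdot\xi$ with $f\in P_1$ (up to a nonzero constant). For such a nonzero $\varphi_{-1}$ one computes $g = \langle\xi,\varphi_{-1}\rangle = -f$ using $|\xi|=1$ and $\langle\xi,\rd f\cdot\xi\rangle = 0$; since $f$ is a nonzero spherical harmonic of degree $1$, it is nonconstant, so $g$ is nonconstant and the inequality is strict. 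The argument has no serious technical obstacle: all the spinorial content is already packaged in Corollary \ref{coro1} and in the constancy of the Killing-spinor norm, and what remains is two applications of Jensen.
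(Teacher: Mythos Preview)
Your proof is correct and follows essentially the same approach as the paper. The only cosmetic difference is that you package the key estimate as two applications of Jensen's inequality on the probability space $(\S^n,\omega_n^{-1}\rd\rvol)$, whereas the paper phrases the same step as H\"older's inequality (bounding $\|\D\psi\|_{2n/(n+1)}^2$ above by $\omega_n^{1/n}\|\D\psi\|_2^2$ and $\|\psi\|_{2n/(n-1)}^2$ below by $\omega_n^{-1/n}\|\psi\|_2^2$); these are equivalent here. Your explicit use of the constancy of $|\varphi_{-1}|^2$ for Killing spinors makes the pointwise structure slightly more transparent, but the paper's argument implicitly relies on the same fact when characterising the equality case, and both proofs finish by invoking Corollary~\ref{coro1}(2) and the explicit identification of $\mathrm{proj}_{E_{-1}}(Q_{-})$.
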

\begin{proof}
    For any $\psi=\xi+\varphi_{-1}$ with $0\not \equiv\varphi_{-1} \in E_{-1}$,  since $\xi\in E_{0}$ is $L^2$-orthogonal to $\varphi_{-1}\in E_{-1}$ we have
\eq{
\int \< \xi, \varphi_{-1} \> =0. 
}
Since $\D \psi =\D \xi +\D \varphi_{-1}=\frac n 2 (\xi-\varphi _{-1}),$
using H\"older's inequality  we have
\eq{
    \Big( \int \abs{ \D\psi }^{\frac{2n}{n+1}} \Big)^{\frac{n+1}{n}} \leq \omega_{n}^{\frac{1}{n}} \int \abs{\D\psi}^2 = \frac{n^2}{4}\omega_{n}^{\frac{1}{n}} \int \abs{\xi-\varphi_{-1}}^2 = \frac{n^2}{4}\omega_{n}^{\frac{1}{n}}  \Big(\int \abs{\xi}^2 + \int |\varphi_{-1}| ^2 \Big)
}
and 
\eq{
    \Big( \int \abs{ \psi }^{\frac{2n}{n-1}} \Big)^{\frac{n-1}{n}} \geq \omega_{n}^{-\frac{1}{n}} \int \abs{\psi}^2 = \omega_{n}^{-\frac{1}{n}} \int \abs{\xi+\varphi_{-1}}^2 = \omega_{n}^{-\frac{1}{n}}  \Big(\int \abs{\xi}^2 + \int |\varphi_{-1}|^2 \Big).
}
Therefore we have
\eq{\label{min}
F(\psi) \le \frac {n^2} 4 \omega_n^{\frac 2 n},}
with equality if and only if $|\xi +\varphi_{-1}|^2 $ and $|\xi -\varphi_{-1}|^2 $ are both constant, or equivalently $\<\xi, \varphi_{-1}\>$ is constant. Now we want to determine which $\varphi_{-1}$ satisfies this condition.
Recall  that the subspace $Q_{-}$ is defined as  
\eq{
    Q_{-} = \{ -\frac {n}{n-1}f\xi + \rd f\cdot\xi | f\in P_{1} \}.
}
From Proposition \ref{classification} one can see $Q_{-}\subset E_{1} \oplus E_{-1}$ and the complement of $E_{-1} \cap Q_{-}^{\perp}$ in $E_{-1}$ is exactly
\eq{
{\rm proj}_{E_{-1}} (Q_{-}) = \{-f \xi +\rd f \cdot\xi\,|\, f\in P_1\}.
}
Thus by Corollary  \ref{coro1} we have
\eq{
    \<\xi,\varphi_{-1}\> &= 0,\quad \forall\,\varphi_{-1}\in E_{-1} \cap Q_{-}^{\perp},\\
    \<\xi,\varphi_{-1}\> &= -f,\quad \forall\,\varphi_{-1}\in {\rm proj}_{E_{-1}} (Q_{-}).
}
Hence  $\<\xi,\varphi_{-1}\>$ is constant, in fact zero, if and only if $\varphi_{-1}\in E_{-1}\cap Q_{-}^\perp$. 
It follows that inequality \eqref{min} is strict if $\varphi_{-1}\in Q_{-}$, and while
\eq{
    F(\varphi_{-1})=F(\xi)=\frac{n^2}{4}\omega_{n}^{\frac{2}{n}}\quad \hbox{ for }\varphi_{-1}\in E_{-1}\cap Q_{-}^{\perp}.
}

\end{proof}

The previous fact is relatively easy to observe on $\S^n$, in contrast to on $\R^n$. For  \eqref{eq5.1} in $\R^n$ and its solutions, we refer to \cite{FL2} and  Appendix C below.

Since
\eq{
    {\rm dim}_{\mathbb{R}}Q_{-} &= {\rm dim}_{\mathbb{R}}P_{1} = n+1,\\
    {\rm dim}_{\mathbb{R}}(E_{-1}\cap Q_{-}^{\perp}) &= 2^{[\frac n2]+1}-(n+1),\\
    {\rm dim}_{\mathbb{R}}E_{0} &= 2^{[\frac n2]+1},\\
    {\rm dim}_{\mathbb{R}}Q &= n+1,
}
we know that $\xi$ as a critical point of $F$ has at least index ${\rm dim}_{\R} Q_{-}=n+1$ and at least nullity \[
  {\rm dim}_{\mathbb{R}}(E_{-1}\cap Q_{-}^{\perp}) +
    {\rm dim}_{\mathbb{R}}E_{0} +
    {\rm dim}_{\mathbb{R}}Q =
2^{[\frac n2]+2}.\]

Now we show that the index is actually $n+1$ and the nullity is actually $2^{[\frac n2]+2}$, which is the dimension of $\widetilde {\mathcal{M}}$ defined in  Remark \ref{rem5.4}. As above, we just need to consider $\xi\in E_0$. 
First of all it is not difficult to check that 
the  second variation of $F$  at $\xi\in E_{0}$ is formally given by 
\eq{\label{second_F}
    \frac{\rd^2}{\rd t^2}\Big|_{t=0}F(\xi + t\varphi)&=n\omega_{n}^{\frac{2}{n}-1}\Bigg( \frac{2}{n}\int \abs{\D\varphi}^2 - \frac{4}{n(n+1)}\int \< \xi,\D\varphi\>^2 - \frac{n}{n-1}\int \<\xi,\varphi\>^2 - \frac{n}{2}\int \abs{\varphi}^2 \Bigg)\\
    &\eqcolon  n\omega_{n}^{\frac{2}{n}-1}G(\varphi),\quad \forall \,\varphi\in E_{0}^{\perp}.
}

We now restate and prove Theorem \ref{thm3}. 
\begin{theorem}\label{restate}
    Any element in $\mathcal{M}$ has index $n+1$ and nullity $2^{[\frac n2]+2}$.
\end{theorem}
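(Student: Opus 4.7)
The plan is to explicitly compute the signature of the Hessian of $F$ at a fixed unit-norm $-\tfrac12$-Killing spinor $\xi\in E_0$; by conformal invariance of $F$ this suffices to determine the index and nullity at any element of $\mathcal M$. Splitting the full tangent space as $E_0\oplus E_0^\perp$, the $E_0$ directions are all null (since $F$ takes the same value on every normalized $\chi\in E_0$), contributing $\dim_\mathbb{R} E_0=2^{[n/2]+1}$ to the nullity and nothing to the index. The remaining task is to compute the signature of $G$ on $E_0^\perp$.

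Decompose $E_0^\perp=\bigoplus_{k\ge 1}F_k$ with $F_k=E_k\oplus E_{-k}$. By the orthogonality $\int\langle\xi,\varphi_{\pm k}\rangle\langle\xi,\varphi_{\pm j}\rangle=0$ for $j\neq k$ from Proposition \ref{intro_estimate}, together with $L^2$-orthogonality of eigenspinors, we obtain $G=\sum_{k\ge 1}G|_{F_k}$. On each $F_k$, write $\varphi=\varphi_k+\varphi_{-k}$ with $\varphi_{\pm k}\in E_{\pm k}$ so that $\D\varphi=\lambda_k\varphi_k+\lambda_{-k}\varphi_{-k}$, and expand $G|_{F_k}$. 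The coefficient of $\int|\varphi_k|^2$ is $\tfrac{2k(n+k)}{n}$ and the coefficient of $\int|\varphi_{-k}|^2$ is $\tfrac{2(k-1)(n+k-1)}{n}$. For $k\ge 2$ both are strictly positive, and substituting the sharp estimates of Proposition \ref{intro_estimate} into the remaining negative terms yields positive definiteness of $G|_{F_k}$.

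The critical block is $F_1$, where the $\int|\varphi_{-1}|^2$ coefficient vanishes. Using the $k=1$ case of Proposition \ref{intro_estimate}, split $L^2$-orthogonally $E_1=\mathcal H\oplus(E_1\cap Q^\perp)$ with $\mathcal H=\{u\xi+\tfrac1n\rd u\cdot\xi:u\in P_1\}$ the extremal subspace (its $L^2$-orthogonality to $E_1\cap Q^\perp$ is precisely the first-order optimality condition built into Proposition \ref{intro_estimate}). A direct computation then collapses the form to
\[
G|_{F_1}(\varphi)=\tfrac{2(n+1)}{n}\int|\varphi_1^\perp|^2-\tfrac{2}{n^2-1}\int\Big(\tfrac1n u+nv\Big)^2,
\]
where $u=\langle\xi,\varphi_1\rangle$, $v=\langle\xi,\varphi_{-1}\rangle$, and $\varphi_1^\perp\in E_1\cap Q^\perp$ is the component of $\varphi_1$ in $\mathcal H^\perp$. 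The signature is then transparent: $\varphi_1^\perp$ yields $\dim E_1-(n+1)=2^{[n/2]+1}n-(n+1)$ positive directions; by Corollary \ref{coro1} the subspace $\{v=0\}=E_{-1}\cap Q_{-}^\perp$ of $\varphi_{-1}$ contributes $2^{[n/2]+1}-(n+1)$ null directions; and the rank-one negative form on the $2(n+1)$-dimensional $(u,v)$-space has $n+1$ null directions along $\tfrac1n u+nv=0$ and $n+1$ negative directions on the complement. Plugging in the explicit $(u,v)$-coordinates of the generators of $Q$ and $Q_{-}$ (namely $u=\tfrac{n^2}{n+1}f,\ v=-\tfrac1{n+1}f$ for $Q$, and $u=-\tfrac{n}{n^2-1}f,\ v=-\tfrac{n^2}{n^2-1}f$ for $Q_{-}$) verifies these subspaces correspond exactly to $Q$ (null) and $Q_{-}$ (negative). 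Summing gives index $=n+1$ and nullity $=2^{[n/2]+1}+(2^{[n/2]+1}-(n+1))+(n+1)=2^{[n/2]+2}$.

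The main obstacle is engineering the closed-form simplification of $G|_{F_1}$ into the perfect square $(\tfrac1n u+nv)^2$. The underlying algebraic identity is that the three coefficients $A,C$ (of $\int u^2,\int v^2$) and $D$ (of $\int uv$) in the reduced quadratic form satisfy exactly $AC=D^2/4$, i.e.\ the $2\times 2$ coupling matrix is of rank one. This degeneracy is not apparent from general principles and must be checked by careful bookkeeping of the four terms in $G$; once established, however, it encodes the tangent-space identity $T_\xi\mathcal M=E_0\oplus Q$ from Lemma \ref{tangent_space} inside the second variation and is what forces the stated clean values of the index and nullity.
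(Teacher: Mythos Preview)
Your approach is correct and essentially identical to the paper's: both reduce by conformal invariance to a fixed $\xi\in E_0$, use Corollary~\ref{another_eigenfunction_2} to block-diagonalize $G$ over the $F_k$, handle $k\ge 2$ via the sharp estimates of Proposition~\ref{intro_estimate}, and on $F_1$ reduce to the same rank-one negative perfect square (your $-\tfrac{2}{n^2-1}\int(\tfrac1n u+nv)^2$ is exactly the paper's $-\tfrac{2}{n^2(n-1)(n+1)}\int(\langle\xi,\varphi_1\rangle+n^2\langle\xi,\varphi_{-1}\rangle)^2$). The only cosmetic difference is that you parametrize the $2(n+1)$-dimensional coupling block by the eigenspinor decomposition $\mathcal H\oplus\mathcal H_-$, whereas the paper uses the equivalent basis $Q\oplus Q_-$; since $\mathcal H\oplus\mathcal H_-=Q\oplus Q_-$ as subspaces of $F_1$, the computations coincide.
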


\begin{proof}
Since $p=\frac {2n}{n+1}<2$, the functional $\int |\D \psi|^p$ is not second order differentiable in $W^{1,p}$, even at a nontrivial Killing spinor. However, it is $C^2$-differentiable in a dense subspace, $C^\infty$, at any Killing spinor. We define  the nullity and the index with respect to this dense space. Now the Theorem follows clearly from  Proposition \ref{prop5.7} below.
\end{proof}

Again by taking  conformal transformation or/and by changing orientation, without loss of generality we
only need  to consider the second variation formula at $\xi\in E_0 $.

Recall $Q_{-}=Q_-(\xi)=\{-\frac n{n-1} f \xi +\rd f\cdot \xi | f\in P_{1}\}$ and $Q = Q(\xi) = \{ (n-1) f\xi+\rd f\cdot \xi | f\in P_{1}\}$, which are orthogonal.
We 
decompose $F_1=E_1\oplus E_{-1}$ by
\[
F_1=Q\oplus Q_{-} \oplus (F_1\cap (Q\oplus Q_{-})^\perp).
\]
It is clear that 
\[
F_1\cap (Q\oplus Q_{-})^\perp=(E_1\cap (Q\oplus Q_{-})^\perp) \oplus (E_{-1}\cap (Q\oplus Q_{-})^\perp)\eqcolon 
\tilde E_1 \oplus \tilde E_{-1}.
\]
Then the whole space of spinor fields is decomposed as
\eq{
E_0\oplus Q \oplus Q_{-} \oplus \tilde E_{-1} \oplus \tilde E_{1} \oplus F_2 \oplus F_3 \cdots. 
}

\begin{proposition}\label{prop5.7} We have 
\begin{enumerate}
    \item $G|_{E_0 \oplus Q \oplus \tilde E_{-1}}=0$. 
    \item $G|_{Q_-} $   is negative definite.
    \item $G$ is uniformly positive definite on the rest, $
 \tilde E_{1} \oplus F_2 \oplus F_3 \cdots. $ 
    
\end{enumerate}
    
\end{proposition}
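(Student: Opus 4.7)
The plan is to establish the three parts using different techniques reflecting the distinct nature of each subspace.

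For Part (1), the vanishing on each summand comes from showing that $F$ is constant along the straight line $\xi+t\varphi$, which forces the full Hessian of $F$ to vanish in these directions. On $E_0$, the spinor $\xi+t\varphi$ stays a $-\tfrac{1}{2}$-Killing spinor; since the Hermitian pairing of two Killing spinors with the same Killing number is parallel (by antisymmetry of Clifford multiplication), $|\xi+t\varphi|^2$ is spatially constant, and so is $|\D(\xi+t\varphi)|^2=\tfrac{n^2}{4}|\xi+t\varphi|^2$, forcing $F(\xi+t\varphi)\equiv\tfrac{n^2}{4}\omega_n^{2/n}$. On $Q\subset T_\xi\mathcal{M}$ (by Lemma~\ref{tangent_space}), a curve $\gamma(t)\subset\mathcal{M}$ with $\gamma'(0)=\varphi$ gives $F\circ\gamma\equiv F(\xi)$; combined with $dF|_\xi=0$, this yields $F''(\xi)(\varphi,\varphi)=0$. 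On $\tilde E_{-1}$, Corollary~\ref{coro1} forces $\langle\xi,\varphi\rangle\equiv 0$ pointwise, so (exactly as in the proof of Proposition~\ref{lemma5.1}) both $|\xi+t\varphi|^2=1+t^2|\varphi|^2$ and $|\D(\xi+t\varphi)|^2=\tfrac{n^2}{4}(1+t^2|\varphi|^2)$ are spatially constant, again giving $F(\xi+t\varphi)\equiv F(\xi)$. Since $Q\oplus\tilde E_{-1}\subset E_0^\perp$, the identity \eqref{second_F} promotes these vanishings to $G\equiv 0$ on those summands, while the $E_0$ direction contributes to the kernel of the full Hessian.

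For Part (2), I would work concretely with a generic $\varphi=-\tfrac{n}{n-1}f\xi+df\cdot\xi\in Q_-$, $f\in P_1$. Decomposing $\varphi=\alpha\varphi_1+\beta\varphi_{-1}$ with $\varphi_1=nf\xi+df\cdot\xi\in E_1$, $\varphi_{-1}=-f\xi+df\cdot\xi\in E_{-1}$, $\alpha=-\tfrac{1}{n^2-1}$, $\beta=\tfrac{n^2}{n^2-1}$, all ingredients of $G$ can be read off from $\int|\varphi_1|^2=n(n+1)\int f^2$, $\int|\varphi_{-1}|^2=(n+1)\int f^2$, $\int\langle\varphi_1,\varphi_{-1}\rangle=0$, $\langle\xi,\varphi_1\rangle=nf$, $\langle\xi,\varphi_{-1}\rangle=-f$, and $\int|df|^2=n\int f^2$ (since $f\in P_1$). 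After algebraic simplification, the four pieces collapse to
\eq{
G(\varphi) = -\frac{2(n^2-n+1)^2}{(n-1)^3(n+1)}\int f^2,
}
which is strictly negative for $f\not\equiv 0$. Together with $\dim_{\mathbb{R}}Q_-=\dim_{\mathbb{R}}P_1=n+1$ this gives negative definiteness of $G$ on the whole of $Q_-$.

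For Part (3), I would decompose the remaining space as $\tilde E_1\oplus\bigoplus_{k\ge 2}F_k$ and argue piece by piece. On $\tilde E_1=\{\varphi\in E_1:\langle\xi,\varphi\rangle\equiv 0\}$ (via Corollary~\ref{coro1}), the eigen-relation $\D\varphi=\tfrac{n+2}{2}\varphi$ kills both $\xi$-pairing terms in $G$, leaving $G(\varphi)=\tfrac{2(n+1)}{n}\int|\varphi|^2$, already uniformly positive. On $F_k$ for $k\ge 3$, the crude bounds $\int\langle\xi,\D\varphi\rangle^2\le\int|\D\varphi|^2$ and $\int\langle\xi,\varphi\rangle^2\le\int|\varphi|^2$, combined with the spectral estimate $\int|\D\varphi|^2\ge\bigl(\tfrac{n+2k-2}{2}\bigr)^2\int|\varphi|^2$, give a lower bound that becomes uniformly positive in $k$ once $k\ge 3$. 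The delicate case is $k=2$, where the naive Cauchy--Schwarz bound is too weak: I would split $\varphi=\varphi_2+\varphi_{-2}$, expand
\eq{
\int\langle\xi,\D\varphi\rangle^2 = \int\Big(\tfrac{n+4}{2}\langle\xi,\varphi_2\rangle-\tfrac{n+2}{2}\langle\xi,\varphi_{-2}\rangle\Big)^2,
}
and control both diagonal and cross contributions using the sharp estimates $\int\langle\xi,\varphi_2\rangle^2\le\tfrac{n+1}{n+3}\int|\varphi_2|^2$ and $\int\langle\xi,\varphi_{-2}\rangle^2\le\tfrac{2}{n+3}\int|\varphi_{-2}|^2$ from Proposition~\ref{crossing_estimate}, together with Cauchy--Schwarz on the cross integral, thereby exhibiting a positive constant. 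Taking $c(n)$ to be the minimum of the constants so obtained completes the proof. The main obstacle is precisely this $k=2$ case, where all four terms of $G$ are of comparable size and the sharp form of Proposition~\ref{crossing_estimate} is essential.
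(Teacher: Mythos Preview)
Your proposal is correct and follows essentially the same strategy as the paper: invoke $T_\xi\mathcal{M}$ and Corollary~\ref{coro1} for the null directions, handle $\tilde E_1$ via $\langle\xi,\varphi\rangle=0$, use crude Cauchy--Schwarz plus the spectral gap for $k\ge 3$, and reserve Proposition~\ref{crossing_estimate} for the borderline case $k=2$.

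The one noteworthy difference is in Part~(2). You parametrize $Q_-$ by $f\in P_1$, decompose into $E_1\oplus E_{-1}$ components, and grind out the explicit value $G(\varphi)=-\tfrac{2(n^2-n+1)^2}{(n-1)^3(n+1)}\int f^2$. The paper instead computes $G$ on all of $Q\oplus Q_-$ at once and shows it collapses to a single negative perfect square,
\[
G(\varphi)=-\frac{2}{n^2(n-1)(n+1)}\int\bigl(\langle\xi,\varphi_1\rangle+n^2\langle\xi,\varphi_{-1}\rangle\bigr)^2,
\]
with kernel exactly $Q$; negative definiteness on $Q_-$ then drops out. This has the advantage of simultaneously re-verifying $G|_Q=0$ and identifying $Q_-$ as the precise negative subspace, but your direct computation is equally valid and yields the same answer. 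For $\tilde E_1$ your bound $\tfrac{2(n+1)}{n}\int|\varphi|^2$ is actually sharper than the paper's $\tfrac{n^2-2}{n(n+1)}\int|\varphi|^2$, since you use $\langle\xi,\D\varphi\rangle=\tfrac{n+2}{2}\langle\xi,\varphi\rangle=0$ exactly rather than bounding $\int\langle\xi,\D\varphi\rangle^2$ by Cauchy--Schwarz.
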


\begin{proof}

We have already seen that $G|_{E_0 \oplus Q}=0$. Corollary \ref{coro1} implies that $G|_{\tilde E_{-1}}=0$ and
\eq{
    G|_{E_0 \oplus  Q \oplus \tilde E_{-1}} = G|_{E_0 \oplus  Q} + G|_{\tilde E_{-1}} = 0.
}
Next we show that $G|_{Q_{-}}$ is negative definite. Note that
\eq{
    Q\oplus Q_{-} = \{nf\xi + \rd f \cdot \xi | f\in P_{1}\}\oplus \{-f\xi + \rd f \cdot \xi | f\in P_{1}\}.
}
Hence we decompose any $0\not\equiv\varphi\in Q\oplus Q_{-}$ 
     by
     \eq{
         \varphi = \varphi_{1} + \varphi_{-1},
     }
     where
     \eq{
         \varphi_{1} \in \{nf\xi + \rd f \cdot \xi | f\in P_{1}\},\quad \varphi_{-1} \in \{-f\xi + \rd f \cdot \xi | f\in P_{1}\}.
     }
     Then using Proposition \ref{intro_estimate}
     we have
     \begin{align}
         G(\varphi) &= \frac{2}{n}\int \abs{\D\varphi}^2 - \frac{4}{n(n+1)}\int \< \xi,\D\varphi\>^2 - \frac{n}{n-1}\int \<\xi,\varphi\>^2 - \frac{n}{2}\int \abs{\varphi}^2\\
        &= \frac{2}{n} \Bigg( \frac{(n+2)^2}{4}\int\abs{\varphi_{1}}^2 + \frac{n^2}{4}\int\abs{\varphi_{-1}}^2 \Bigg) - \frac{n}{2}\int\abs{\varphi_{1}}^2 - \frac{n}{2} \int\abs{\varphi_{-1}}^2\\
        &\quad  - \frac{4}{n(n+1)}\Bigg( \frac{(n+2)^2}{4}\int\<\xi,\varphi_{1}\>^2 - \frac{n(n+2)}{2}\int\<\xi,\varphi_{1}\>\<\xi,\varphi_{-1}\> +\frac{n^2}{4}\int\<\xi,\varphi_{-1}\>^2 \Bigg)\\
        &\quad -\frac{n}{n-1}\Bigg( \int\<\xi,\varphi_{1}\>^2 + 2\int\<\xi,\varphi_{1}\>\<\xi,\varphi_{-1}\> + \int\<\xi,\varphi_{-1}\>^2 \Bigg) \\
        &= \frac{2(n+1)}{n}\int\abs{\varphi_{1}}^2 - \frac{2(n^3+2n^2-2)}{n(n-1)(n+1)}\int\<\xi,\varphi_{1}\>^2 -\frac{4}{(n+1)(n-1)}\int\<\xi,\varphi_{1}\>\<\xi,\varphi_{-1}\> \\
        &\quad -\frac{2n^2}{(n+1)(n-1)}\int\<\xi,\varphi_{-1}\>^2\\ 
         &= \Bigg[ \frac{2(n+1)^2}{n^2} - \frac{2(n^3+2n^2-2)}{n(n-1)(n+1)} \Bigg]\int\<\xi,\varphi_{1}\>^2 -\frac{4}{(n+1)(n-1)}\int\<\xi,\varphi_{1}\>\<\xi,\varphi_{-1}\> \\
        &\quad -\frac{2n^2}{(n+1)(n-1)}\int\<\xi,\varphi_{-1}\>^2\\
        &= -\frac{2}{n^2(n-1)(n+1)}\int\big(\<\xi,\varphi_{1}\>+n^2\<\xi,\varphi_{-1}\>\big)^2 \leq 0. \label{G_2_1}
    \end{align}
    Equality holds if and only if
    \eq{
    \varphi_1= n f\xi +\rd f \cdot \xi, \qquad \varphi_{-1} = -h\xi+\rd h \cdot \xi \label{G_2_2}
    }
    for some $f,h\in P_{1}$ and $\< \xi, \varphi_1 +n^2\varphi_{-1}\>=0$. The latter implies that $f=nh$, and it follows that
    \[
    \varphi =\varphi_1 + \varphi _{-1}=(n+1)((n-1)h\xi+\rd h\cdot\xi)\in Q.
    \]
    Hence $G|_{Q_{-}}$ is negative definite. 
    
    It remains to show that the restriction of $G$ on the rest is strictly positive definite.
     As before, we only need to consider $G|_{F_{k}}$ individually. For $k\geq 3$, using Cauchy-Schwarz inequality we have for any $\varphi\in F_{k}$
     \begin{align}
         G(\varphi) &\geq \frac{2(n-1)}{n(n+1)}\int\abs{\D\varphi}^2 - \frac{n(n+1)}{2(n-1)}\int\abs{\varphi}^2\\
         &\geq \frac{2(n-1)}{n(n+1)}\left(\frac{n}{2}+k-1\right)^2 \int\abs{\varphi}^2 - \frac{n(n+1)}{2(n-1)}\int\abs{\varphi}^2\\
         &= \frac{2(n-1)}{n(n+1)}\left[ \left(\frac{n}{2}+k-1\right)^2 - \left(\frac{n(n+1)}{2(n-1)}\right)^2 \right]\int\abs{\varphi}^2>0,
     \end{align}
     where we have used
     \eq{
         \frac{n}{2}+k-1 \geq \frac{n}{2}+2 \geq \frac{n(n+1)}{2(n-1)}.
     }
  Hence $G|_{F_{k}}$ is positive definite for all $k\geq 3$.     
    For any $\varphi\in \Tilde{E_{1}}$, applying Corollary \ref{coro1} we have $\<\xi,\varphi\>=0$. It, together with the  Cauchy-Schwarz inequality, yields
     \begin{align}
         G(\varphi) &\geq \frac{2(n-1)}{n(n+1)}\int\abs{\D\varphi}^2 - \frac{n}{2} \int\abs{\varphi}^2\\
         &= \frac{2(n-1)}{n(n+1)}\left(\frac{n}{2}+1\right)^2 \int\abs{\varphi}^2 - \frac{n}{2} \int\abs{\varphi}^2\\
         &= \frac{n^2-2}{n(n+1)}\int\abs{\varphi}^2.
     \end{align}
     Hence $G|_{\Tilde{E_{1}}}$ is positive definite.

     For $k=2$, we decompose any $\varphi\in F_{2}$ 
     by
     \eq{
         \varphi = \varphi_{2} + \varphi_{-2}.
     }
     Using Proposition \ref{intro_estimate} we have
      \begin{align}
         G(\varphi) &= \frac{2}{n}\int \abs{\D\varphi}^2 - \frac{4}{n(n+1)}\int \< \xi,\D\varphi\>^2 - \frac{n}{n-1}\int \<\xi,\varphi\>^2 - \frac{n}{2}\int \abs{\varphi}^2\\
        &= \frac{2}{n} \Bigg( \frac{(n+4)^2}{4}\int\abs{\varphi_{2}}^2 + \frac{(n+2)^2}{4}\int\abs{\varphi_{-2}}^2 \Bigg) - \frac{n}{2}\int\abs{\varphi_{2}}^2 - \frac{n}{2} \int\abs{\varphi_{-2}}^2\\
        &\quad - \frac{4}{n(n+1)}\Bigg( \frac{(n+4)^2}{4}\int\<\xi,\varphi_{2}\>^2 - \frac{(n+4)(n+2)}{2}\int\<\xi,\varphi_{2}\>\<\xi,\varphi_{-2}\> +\frac{(n+2)^2}{4}\int\<\xi,\varphi_{-2}\>^2 \Bigg)\\
        &\quad -\frac{n}{n-1}\Bigg( \int\<\xi,\varphi_{2}\>^2 + 2\int\<\xi,\varphi_{2}\>\<\xi,\varphi_{-2}\> + \int\<\xi,\varphi_{-2}\>^2 \Bigg) \\
        &= \frac{4(n+2)}{n}\int\abs{\varphi_{2}}^2 - \frac{2(n^3+4n^2+4n-8)}{n(n-1)(n+1)} \int\<\xi,\varphi_{2}\>^2 + \frac{4(2n^2+n-4)}{n(n+1)(n-1)}\int\<\xi,\varphi_{2}\>\<\xi,\varphi_{-2}\>\\
        &\quad + \frac{2(n+1)}{n}\int\abs{\varphi_{-2}}^2 - \frac{2(n^3+2n^2-2)}{n(n-1)(n+1)}\int\<\xi,\varphi_{-2}\>^2\\
        &\geq \Bigg[\frac{4(n+2)(n+3)}{n(n+1)} - \frac{2(n^3+4n^2+4n-8)}{n(n-1)(n+1)}\Bigg] \int\<\xi,\varphi_{2}\>^2 + \frac{4(2n^2+n-4)}{n(n+1)(n-1)}\int\<\xi,\varphi_{2}\>\<\xi,\varphi_{-2}\>\\
        &\quad + \Bigg[\frac{(n+1)(n+3)}{n} - \frac{2(n^3+2n^2-2)}{n(n-1)(n+1)}\Bigg]\int\<\xi,\varphi_{-2}\>^2.
    \end{align}
      Now  it is elementary to see that $G|_{F_2}$
is positive definite.

\begin{remark}
   One can also show that
    the vectorial Sobolev inequality studied in \cites{FL1, FL3}
    admits a similar phenomenon, namely
    \[
    A_0(x) = \frac{3}{(1+\abs{x}^2)^2}\left( (1-\abs{x}^2)w + 2x\cdot w x + 2w\wedge x\right),
    \]
   with a constant $w\in\mathbb{R}^3$, is a solution of the corresponding Euler-Lagrange equation, but not a minimizer of the following
    inequality
    \eq{
        \frac{\norm{\nabla\wedge A}_{3/2}^{3/2}}{ \inf_{\varphi\in W^{1,3}(\R^3)}\norm{A-\nabla\varphi}_{3}^{3/2} } \geq S>0,
    }
    where the infimum is taken over all non-zero $A$ such that $A\in L^3$ and $\nabla\wedge A\in L^{3/2}$.
    The existence of minimizers was proved in \cite{FL3}. 
\end{remark}
\end{proof}

\appendix

\section{Proof of Theorem \ref{local_stability_inequality}}

In this Appendix, we use a similar method as in \cite{Figalli_Zhang_20} to 
prove that Theorem \ref{spectral_gap} implies Theorem 
\ref{local_stability_inequality}. The difficulty arises from the index $p=\frac {2n}{n+1} <2.$ 
For more explanation see \cite{Figalli_Zhang_20}.  Our case is actually simpler, because we consider spinor fields on a compact manifold and because we only need to consider around a given Killing spinor $\xi$, which  has a constant length $\abs{\xi}\equiv a \not = 0$ and $\D \xi =\frac n 2 \xi$. Hence unlike in \cite{Figalli_Zhang_20}, we do not need to use weighted space. For the convenience of the reader we give the detail in this Appendix.

\begin{lemma}[\cite{Figalli_Zhang_20}*{Lemma 2.1}]
    Let $x,y\in\mathbb{R}^N$ and $p\in (1,2)$. For any $\kappa>0$, there exists $c=c(p,\kappa)>0$ such that
    \eq{
        \abs{x+y}^p &\geq \abs{x}^p + p\abs{x}^{p-2}\<x,y\> + \frac{1-\kappa}{2} \big( p\abs{x}^{p-2}\abs{y}^2 + p(p-2)\abs{w}^{p-2} ( \abs{x} - \abs{x+y} )^2 \big)\\
        &\quad + c(p,\kappa){\rm min}\{\abs{y}^p, \abs{x}^{p-2}\abs{y}^2\},
    }
    where
    \eq{
        w=w(x,x+y)\coloneqq 
        \begin{cases}
            \left( \frac{\abs{x+y}}{(2-p)\abs{x+y}+(p-1)\abs{x}} \right)^{\frac{1}{p-2}}x & \text{if } \abs{x} < \abs{x+y}, \\
            x  & \text{if } \abs{x} \geq \abs{x+y}.
        \end{cases}
    }
\end{lemma}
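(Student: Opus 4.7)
The plan is to reduce the inequality to a one-dimensional analysis along the segment $t \mapsto x + ty$ and then carry out a second-order Taylor expansion while carefully tracking the non-trivial positive remainder. By the orthogonal invariance of every quantity on both sides, I may first rotate so that $N = 2$ and work in the plane spanned by $x$ and $y$; the problem then depends only on three scalars $r = |x|$, $s = |x+y|$ and the angle $\theta$ between them, with $|y|^2 = r^2 + s^2 - 2rs\cos\theta$.

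The starting point is the Taylor identity $f(1) = f(0) + f'(0) + \int_0^1 (1-t) f''(t)\,dt$ applied to $f(t) = |x+ty|^p$. A direct computation gives
\eq{
f'(t) = p|x+ty|^{p-2}\<x+ty, y\>, \qquad f''(t) = p|x+ty|^{p-2}\bigl(|y|^2 - (2-p)\, g'(t)^2\bigr),
}
where $g(t) = |x+ty|$ so that $g'(t) = \<x+ty, y\>/|x+ty|$. Since $|g'(t)| \le |y|$ by Cauchy--Schwarz and $p - 2 < 0$, the integrand is manifestly nonnegative; this is the source of all positivity in the statement.

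To recover the three terms on the right-hand side I would proceed as follows. The contribution $p|x|^{p-2}|y|^2$ is obtained by bounding $|x+ty|^{p-2} \ge |x|^{p-2}$ on the subinterval where $|x+ty| \le |x|$ and treating the opposite subinterval by monotonicity. The contribution $p(p-2)|w|^{p-2}\bigl(|x| - |x+y|\bigr)^2$ comes from writing $|x+y| - |x| = \int_0^1 g'(t)\,dt$ and applying the Cauchy--Schwarz inequality to the weighted integral $\int_0^1 (1-t) g'(t)^2\,dt$; the weight $|w|^{p-2}$ arises as the specific value of $|x+ty_0|^{p-2}$ at the optimal evaluation point $t_0$ that makes the resulting bound sharp. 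Finally, the gain term $c(p,\kappa)\min\{|y|^p, |x|^{p-2}|y|^2\}$ is extracted by sacrificing a factor $(1-\kappa)$ in front of the two quadratic terms and then splitting into the regime $|y| \le \lambda |x|$, where a third-order Taylor argument yields a gap of order $|x|^{p-2}|y|^2$, and the regime $|y| > \lambda |x|$, where strict convexity of $t \mapsto t^p$ at scale $|y|$ directly produces a gap of order $|y|^p$; the cutoff $\lambda$ is chosen depending on $\kappa$.

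The main obstacle is pinning down the precise interpolation weight $|w|^{p-2}$. Naively replacing $|w|$ by $|x|$ breaks when $|x+y| \gg |x|$, while replacing $|w|$ by $|x+y|$ breaks in the opposite regime; the formula defining $w$ is engineered so that $|w|^{p-2}$ interpolates both behaviors and reproduces the true size of $|x+ty|^{p-2}$ averaged along the segment. Because this technical point is already resolved in \cite{Figalli_Zhang_20}*{Lemma~2.1} in exactly the form needed here, my actual write-up would simply cite that reference rather than reproduce the case analysis.
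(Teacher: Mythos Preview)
The paper does not prove this lemma at all; it is stated with an explicit citation to \cite{Figalli_Zhang_20}*{Lemma~2.1} and used as a black box. Your proposal ultimately lands in the same place---citing the reference---so in that sense it matches the paper exactly, and the Taylor-expansion sketch you give beforehand is a reasonable (if incomplete) outline of how the Figalli--Zhang argument goes, though it is not required here.
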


\begin{corollary}\label{appendix_cor}
    Let $p=\frac{2n}{n+1}$ and $\psi=\xi+\varphi$, where $\xi\in E_0$ with $\abs{\xi}\equiv a$ and $\varphi\in T_{\xi}\mathcal{M}^{\perp}$. For any $\kappa>0$, there exists $c=c(
    \kappa)>0$ such that
    \eq{
        \abs{\D\psi}^p &\geq \big(\frac{n}{2}a\big)^p + p\big(\frac{n}{2}a\big)^{p-2}\cdot\frac{n}{2}\<\xi,\D\varphi\> + \frac{1-\kappa}{2}\Big( p\big(\frac{n}{2}a\big)^{p-2}\abs{\D\varphi}^2 + p(p-2)\abs{w}^{p-2}\big(\frac{n}{2}a - \abs{\D\psi}\big)^2 \Big)\\
        &\quad+ c(\kappa){\rm min}\Big\{\abs{\D\varphi}^p, \big(\frac{n}{2}a\big)^{p-2}\abs{\D\varphi}^2\Big\},
    }
    where
    \eq{\label{def_w}
        w=w(\varphi,\psi)\coloneqq 
        \begin{cases}
            \left( \frac{\abs{\D\psi}}{(2-p)\abs{\D\psi}+(p-1)\frac{n}{2}a} \right)^{\frac{1}{p-2}}\frac{n}{2}\xi & \text{if } \frac{n}{2}a < \abs{\D\psi}, \\
            \frac{n}{2}\xi  & \text{if } \frac{n}{2}a \geq \abs{\D\psi}.
        \end{cases}
    }
\end{corollary}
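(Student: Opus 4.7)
The plan is to apply the preceding Lemma of Figalli--Zhang pointwise on $\mathbb{S}^n$, with the specific choice of vectors that arises naturally from the Killing spinor structure. The starting observation is that since $\xi\in E_0$ is a $-\tfrac12$-Killing spinor, it is a $\tfrac{n}{2}$-eigenspinor of the Dirac operator, so that $\D\psi = \D\xi + \D\varphi = \tfrac{n}{2}\xi + \D\varphi$ at every point of $\mathbb{S}^n$. Moreover $\abs{\xi}\equiv a$ is constant: indeed the defining equation $\nabla_{X}\xi=-\tfrac12 X\cdot\xi$ combined with the fact that Clifford multiplication by a real tangent vector is skew with respect to $\<\cdot,\cdot\>$ forces $X(\abs{\xi}^2)=0$, as recorded in Section 2.

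At each point of $\mathbb{S}^n$, I will identify the spinor fibre $\Sigma_{x}\mathbb{S}^n$, equipped with the real part of its Hermitian inner product, with a real Euclidean space $\mathbb{R}^N$ of dimension $N=2\cdot 2^{[n/2]}$. The Lemma is a purely real-vector-space inequality, so it applies verbatim with the substitution $x\leftrightarrow \tfrac{n}{2}\xi$ and $y\leftrightarrow \D\varphi$. Under this substitution one has $\abs{x}=\tfrac{n}{2}a$, $\abs{x+y}=\abs{\D\psi}$, $\abs{y}=\abs{\D\varphi}$, and $\<x,y\>=\tfrac{n}{2}\<\xi,\D\varphi\>$, and each term of the Lemma matches term-by-term the corresponding term in the claimed inequality. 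The auxiliary vector $w(x,x+y)$ produced by the Lemma, specialized to $x=\tfrac{n}{2}\xi$, becomes exactly the piecewise expression in \eqref{def_w}, with the dichotomy $\abs{x}<\abs{x+y}$ translating into $\tfrac{n}{2}a<\abs{\D\psi}$.

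There is essentially no obstacle: the Corollary is a direct reformulation of the Lemma for the algebraic setup at hand. Two minor comments are in order. First, the hypothesis $\varphi\in T_{\xi}\mathcal{M}^{\perp}$ plays no role at the pointwise level; it is listed because the Corollary will later be integrated against $\rd\rvol$ and combined with the spectral gap Theorem~\ref{spectral_gap}, which does require this orthogonality. Second, one checks that $p=\tfrac{2n}{n+1}\in(1,2)$ holds for every $n\geq 2$, so that the Lemma is indeed applicable with the present exponent, and the constant $c(\kappa)$ in the Lemma, being independent of the vectors involved, transfers directly to the constant $c(\kappa)$ in the Corollary.
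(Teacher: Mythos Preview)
Your proof is correct and follows exactly the same approach as the paper, which simply states that the Corollary follows by letting $x=\D\xi$ and $y=\D\varphi$ in the preceding Lemma. Your additional remarks (identifying the spinor fibre with a real Euclidean space, verifying $p\in(1,2)$, and noting that the orthogonality hypothesis is not used pointwise) are accurate elaborations but not required for the argument.
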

\begin{proof}
    It follows by letting $x=\D\xi$ and $y=\D\varphi$ in the previous lemma.
\end{proof}

\begin{lemma}\label{appendix_lem}
    Let $p=\frac{2n}{n+1}$ and $\psi$ as above. For any $\gamma_0>0$, there exists $\delta=\delta(n,\gamma_0)>0$, such that for any $\varphi\in W^{1,p}\cap T_{\xi}\mathcal{M}^{\perp}$ with $\norm{\varphi}_{W^{1,p}}\leq\delta$, we have
    \eq{
        &\big(\frac{n}{2}a\big)^{p-2}\int\abs{\D\varphi}^2 + (p-2)\int\abs{w}^{p-2}\big(\abs{\D\psi} - \frac{n}{2}a\big)^2 + \gamma_0\int{\rm min}\Big\{ \abs{\D\varphi}^p, \big(\frac{n}{2}a\big)^{p-2}\abs{\D\varphi}^2 \Big\}\\
        &\quad \geq \big(\frac{n}{2}a\big)^{p-2}\big(\frac{n}{2} + \frac{c_0}{2}\big)\int\<\D\varphi,\varphi\>,
    }
    where $c_0>0$ is the same constant as in Theorem \ref{spectral_gap}.
\end{lemma}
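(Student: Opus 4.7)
The plan is to reduce the inequality, via a pointwise expansion, to the spectral-gap estimate of Remark \ref{rem_add}, using the buffer $\gamma_0\int\min\{|\D\varphi|^p,(\tfrac{n}{2}a)^{p-2}|\D\varphi|^2\}$ and the smallness of $\|\varphi\|_{W^{1,p}}$ to absorb all lower-order errors. The key observation is that $p-2=-\tfrac{2}{n+1}<0$, so the second term on the left is negative; however, to leading order it is proportional to $\int\langle\xi,\D\varphi\rangle^2$, exactly matching the ``bad'' term in the spectral-gap inequality.

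First I would record the algebraic identities
\begin{equation*}
|\D\psi|^2=(\tfrac{n}{2}a)^2+n\langle\xi,\D\varphi\rangle+|\D\varphi|^2,\qquad
|\D\psi|-\tfrac{n}{2}a=\frac{n\langle\xi,\D\varphi\rangle+|\D\varphi|^2}{|\D\psi|+\tfrac{n}{2}a},
\end{equation*}
which follow from $\D\xi=\tfrac{n}{2}\xi$ and $|\xi|\equiv a$. Then I would split $\mathbb{S}^n=A\cup A^c$ with $A\coloneqq\{|\D\varphi|\le \tfrac{n}{4}a\}$. On $A$, both $|\D\psi|+\tfrac{n}{2}a$ and (by \eqref{def_w}) $|w|$ are comparable to $\tfrac{n}{2}a$, with relative errors of order $|\D\varphi|/a$, yielding pointwise
\begin{equation*}
(p-2)|w|^{p-2}\bigl(|\D\psi|-\tfrac{n}{2}a\bigr)^2=\frac{p-2}{a^2}\bigl(\tfrac{n}{2}a\bigr)^{p-2}\langle\xi,\D\varphi\rangle^2+R(\varphi),
\end{equation*}
with $R(\varphi)=O_{n}(a^{p-3}|\D\varphi|^3+a^{p-4}|\D\varphi|^4)$. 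On $A^c$, where $|\D\varphi|\ge\tfrac{n}{4}a$, the minimum in the buffer equals $|\D\varphi|^p$, and both $(\tfrac{n}{2}a)^{p-2}|\D\varphi|^2$ and $\bigl|(p-2)|w|^{p-2}(|\D\psi|-\tfrac{n}{2}a)^2\bigr|$ are controlled pointwise by a constant depending only on $n$ times $|\D\varphi|^p$.

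Combining the main terms on $A$ and using $p-2=-\tfrac{2}{n+1}$, the dominant contribution to the left-hand side becomes
\begin{equation*}
\bigl(\tfrac{n}{2}a\bigr)^{p-2}\left\{\int|\D\varphi|^2-\frac{2}{(n+1)a^2}\int\langle\xi,\D\varphi\rangle^2\right\}.
\end{equation*}
Applying Remark \ref{rem_add} to the unit-length Killing spinor $\xi/a$ (noting $T_\xi\mathcal{M}=T_{\xi/a}\mathcal{M}$) and rescaling, this bracketed expression is at least $\bigl(\tfrac{n}{2}+c_0\bigr)\int\langle\D\varphi,\varphi\rangle$. I then keep $\tfrac{c_0}{2}\int\langle\D\varphi,\varphi\rangle$ as slack to absorb: (a) the cubic and quartic remainder $R(\varphi)$ on $A$, which by H\"older and the pointwise bound $|\D\varphi|\le\tfrac{n}{4}a$ on $A$ is controlled by $C(n)\delta^{p}\int|\D\varphi|^2$ and hence by a small multiple of $\int\langle\D\varphi,\varphi\rangle$; and (b) the $A^c$ contributions, pointwise bounded by $C(n)|\D\varphi|^p$ and swallowed by the fixed buffer $\gamma_0\int_{A^c}|\D\varphi|^p$, provided $\delta$ is chosen small depending on $n$ and $\gamma_0$ (so that $|A^c|$ itself is small by Chebyshev).

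The main obstacle is step (b), the absorption on the bad set: since $p<2$, smallness of $\|\varphi\|_{W^{1,p}}$ does not yield any $L^\infty$ or $L^2$ control of $\D\varphi$ on $A^c$, and the constant $C(n)$ that compares the offending quadratic terms to $|\D\varphi|^p$ on $A^c$ must be independent of $\gamma_0$, so that $\delta$ can then be chosen small depending on $\gamma_0$ without circularity. This is precisely the point where the weighted-norm apparatus of \cite{Figalli_Zhang_20} is needed in the Euclidean setting; here the compactness of $\mathbb{S}^n$ and the constancy $|\xi|\equiv a$ replace the weight, simplifying the interpolation but still requiring careful tracking of constants through the pointwise expansion of $|w|^{p-2}$.
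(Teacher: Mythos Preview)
Your direct Taylor-expansion approach correctly isolates the main second-order term and its link to Remark~\ref{rem_add}, but the absorption of the error terms does not close, and this is precisely why the paper argues by contradiction rather than directly. There are two places where your estimate fails. First, the claimed bound $\int_A|R|\le C(n)\delta^{p}\int|\D\varphi|^2$ is not valid: on $A$ you only have $|\D\varphi|\le\tfrac{n}{4}a$, which yields $\int_A|R|\le C(n)a^{p-2}\int_A|\D\varphi|^2$ or $\int_A|R|\le C(n)\delta^p$, neither of which is a small multiple of any available positive quantity (and note that your proposed slack $\tfrac{c_0}{2}\int\<\D\varphi,\varphi\>$ is useless here, since for $\varphi\in(E_0\oplus Q_\xi)^\perp$ this integral can be zero or negative and does not control $\int|\D\varphi|^2$). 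Second---and you yourself flag this as the main obstacle without resolving it---on $A^c$ the negative $w$-term is pointwise of size $(\tfrac{n}{2}a)^{p-2}|\D\varphi|^2$, while the buffer is $\gamma_0\min\{|\D\varphi|^p,(\tfrac{n}{2}a)^{p-2}|\D\varphi|^2\}$; the pointwise ratio of the former to the latter is at least $1$, so for small $\gamma_0$ the buffer cannot swallow it. Making $\delta$ (hence $|A^c|$) small via Chebyshev does not help, because both competing terms live over the \emph{same} set with the \emph{same} density.

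The paper's proof sidesteps this by contradiction: one assumes a violating sequence $\varphi_i\to0$ in $W^{1,p}$, normalizes $\hat\varphi_i=\varphi_i/\epsilon_i$ with $\epsilon_i^2=\int(\tfrac{n}{2}a+|\D\varphi_i|)^{p-2}|\D\varphi_i|^2$, shows $\hat\varphi_i$ is bounded in $W^{1,p}$ and that $\D\hat\varphi_i\cdot\chi_{R_i}$ is bounded in $L^2$, extracts a weak $W^{1,2}$ limit $\hat\varphi$, and then passes to the limit via Fatou's lemma together with the a.e.\ convergence $|w_i|\to\tfrac{n}{2}a$ and $(|\D(\xi+\varphi_i)|-\tfrac{n}{2}a)/\epsilon_i\to\<\xi/a,\D\hat\varphi\>$. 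In the limit one obtains exactly the clean $L^2$ inequality $\int|\D\hat\varphi|^2+(p-2)\int\<\xi/a,\D\hat\varphi\>^2\le(\tfrac{n}{2}+\tfrac{c_0}{2})\int\<\D\hat\varphi,\hat\varphi\>$, contradicting Remark~\ref{rem_add}. The point is that the bad set $\{|\D\varphi_i|>\tfrac{n}{2}a\}$ has measure tending to zero, so it disappears in the limit; no quantitative comparison on that set is ever needed.
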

\begin{proof}
    We prove by contradiction. Assume there exist some $\gamma_0>0$ and $0\not\equiv\varphi_i\ra0$ in $W^{1,p}$ with $\varphi_i\in T_{\xi}\mathcal{M}^{\perp}$ such that
    \eq{\label{appendix_assumption}
        &\big(\frac{n}{2}a\big)^{p-2}\int\abs{\D\varphi_i}^2 + (p-2)\int\abs{w_i}^{p-2}\big(\abs{\D(\xi+\varphi_i)} - \frac{n}{2}a\big)^2 + \gamma_0\int{\rm min}\Big\{ \abs{\D\varphi_i}^p, \big(\frac{n}{2}a\big)^{p-2}\abs{\D\varphi_i}^2 \Big\}\\
        &\quad < \big(\frac{n}{2}a\big)^{p-2}\big(\frac{n}{2} + \frac{c_0}{2}\big)\int\<\D\varphi_i,\varphi_i\>, 
    }
    where $w_i$ corresponds to $\varphi_i$ defined as in \eqref{def_w}. Let
    \eq{
        \epsilon_i \coloneqq \Big( \int \big(\frac{n}{2}a + \abs{\D\varphi_i}\big)^{p-2}\abs{\D\varphi_i}^2 \Big)^{\frac{1}{2}}
    }
    and $\hat{\varphi_i}\coloneqq \varphi_i/\epsilon_i$. Then since $p-2<0$, we have
    \eq{
        \epsilon_i \leq \Big( \int \abs{\D\varphi_i}^p \Big)^{\frac{1}{2}} \ra 0.
    }
    Denote
    \eq{
        R_i\coloneqq\Big\{ \frac{n}{2}a \geq \abs{\D\varphi_i} \Big\},\quad S_i\coloneqq\Big\{ \frac{n}{2}a < \abs{\D\varphi_i} \Big\}.
    }
    Applying \cite{Figalli_Zhang_20}*{(2.2)} to $x=\D\xi$ and $y=\D\varphi_i$ gives
    \eq{
        \big(\frac{n}{2}a\big)^{p-2}\abs{\D\varphi_i}^2 + (p-2)\abs{w_i}^{p-2}\big(\abs{\D(\xi+\varphi_i)} - \frac{n}{2}a\big)^2 \geq c\cdot \frac{\frac{n}{2}a}{\frac{n}{2}a + \abs{\D\varphi_i}}\big(\frac{n}{2}a\big)^{p-2}\abs{\D\varphi_i}^2
    }
    for some constant $c>0$. Hence on $R_i$ we have
    \eq{
        \big(\frac{n}{2}a\big)^{p-2}\abs{\D\hat{\varphi_i}}^2 + (p-2)\abs{w_i}^{p-2}\Big(\frac{\abs{\D(\xi+\varphi_i)} - \frac{n}{2}a}{\epsilon_i}\Big)^2 \geq c\big(\frac{n}{2}a\big)^{p-2}\abs{\D\hat{\varphi_i}}^2
    }
    and on $S_i$ we have
    \eq{
        {\rm min}\Big\{ \abs{\D\varphi_i}^p, \big(\frac{n}{2}a\big)^{p-2}\abs{\D\varphi_i}^2 \Big\} = \abs{\D\varphi_i}^p.
    }
    Combining with \eqref{appendix_assumption} it follows
    \eq{\label{appendix_assumption_1}
        c(p)\big(\frac{n}{2}a\big)^{p-2}\int_{R_i}\abs{D\hat{\varphi_i}}^2 + \gamma_0\int_{S_i}\epsilon_i^{p-2}\abs{\D\hat{\varphi_i}}^p \leq \big(\frac{n}{2}a\big)^{p-2}\big(\frac{n}{2} + \frac{c_0}{2}\big)\int\<\D\hat{\varphi_i},\hat{\varphi_i}\>.
    }
    On the other hand, by H\"older's inequality we have
    \eq{
        \int\abs{\D\hat{\varphi_i}}^p &\leq \Big( \int \big(\frac{n}{2}a+\abs{\D\varphi_i}\big)^{p-2}\abs{\D\hat{\varphi_i}}^2 \Big)^{\frac{p}{2}} \Big( \int \big(\frac{n}{2}a+\abs{\D\varphi_i}\big)^{p} \Big)^{1-\frac{p}{2}}\\
        &= \Big( \int \big(\frac{n}{2}a+\abs{\D\varphi_i}\big)^{p} \Big)^{1-\frac{p}{2}}\\
        &\leq C(p)\left[ \Big(\int \big(\frac{n}{2}a\big)^p\Big)^{1-\frac{p}{2}} + \epsilon_i^{\frac{p(2-p)}{2}}\Big(\int \abs{\D\hat{\varphi_i}}^p\Big)^{1-\frac{p}{2}} \right].
    }
    Hence
    \eq{
        \int \abs{\D\hat{\varphi_i}}^p \leq C(n,p)
    }
    and then up to a subsequence $\hat{\varphi_i}\rightharpoonup\hat{\varphi}$ weakly in $W^{1,p}$ for some $\hat{\varphi}$ and hence $\hat{\varphi_i}\ra\hat{\varphi}$ strongly in $L^2$. By the Sobolev inequality \eqref{O_Sobolev}, the right-hand side of \eqref{appendix_assumption_1} is uniformly bounded. Since by definition of $S_i$ we have
    \eq{
        \int_{S_i}\epsilon_i^{p-2}\abs{\D\hat{\varphi_i}}^p \geq \int_{S_i}\epsilon_i^{-2}(\frac{n}{2}a)^p,
    }
    it follows $\abs{S_i}\ra 0$ and hence $R_i\ra\S^n$. \eqref{appendix_assumption_1} also implies that up to a subsequence $\D\hat{\varphi_i}\cdot\chi_{R_i}\rightharpoonup\D\hat{\varphi}$ weakly in $L^2$,     hence $\hat \varphi \in W^{1,2}$.
    Since $\varphi_i\ra 0$ in $W^{1,p}$, up to a subsequence $\varphi_i\ra 0$ a.e., hence $\abs{w_i}\ra \frac{n}{2}a$ a.e. Moreover, up to a subsequence we have
    \eq{
        \frac{\abs{\D(\xi+\varphi_i)} - \frac{n}{2}a}{\epsilon_i} = \Big\<\int_0^1\frac{\D(\xi+t\varphi_i)}{\abs{\D(\xi+t\varphi_i)}}\rd t, \D\hat{\varphi_i}\Big\> \ra \Big\< \frac{\D\xi}{\abs{\D\xi}}, \D\hat{\varphi}\Big\> =  \< \xi/\abs{\xi}, \D\hat{\varphi}\> \hbox{ a.e.}
    }
    Finally, \eqref{appendix_assumption} implies
    \eq{
        \big(\frac{n}{2}a\big)^{p-2}\int_{R_i}\abs{\D\hat{\varphi_i}}^2 + (p-2)\int\abs{w_i}^{p-2}\Big(\frac{\abs{\D(\xi+\varphi_i)} - \frac{n}{2}a}{\epsilon_i}\Big)^2 < \big(\frac{n}{2}a\big)^{p-2}\big(\frac{n}{2} + \frac{c_0}{2}\big)\int\<\D\hat{\varphi_i},\hat{\varphi_i}\>
    }
    and now it is easy to see that up to a subsequence every integrand in the left-hand side a.e. converges. Let $i\ra\infty$ and using Fatou's lemma for left-hand side and Lebesgue’s dominated convergence theorem we have
    \eq{\label{appendix_assumption_2}
        \int \abs{\D\hat{\varphi}}^2 + (p-2)\int\<\xi/\abs{\xi}, \D\hat{\varphi}\>^2 \leq \big(\frac{n}{2} + \frac{c_0}{2}\big)\int\<\D\hat{\varphi},\hat{\varphi}\>.
    }
    By the $L^2$-convergence of $\hat{\varphi_i}$ we know that $\hat{\varphi}\in T_{\xi}\mathcal{M}^{\perp} $. Since $\hat \varphi \in W^{1,2}$, \eqref{appendix_assumption_2} contradicts Theorem \ref{spectral_gap} and in fact to Remark \ref{rem_add} .
\end{proof}

We also need the following lemma, see Lemma 4.1  in \cite{Figalli_Zhang_20}.

\begin{lemma}\label{lem:orthogonal}
    Given any $\psi$. If $\norm{\psi-\xi_0}_{W^{1,\frac{2n}{n+1}}}\leq \epsilon$ for some small $\epsilon>0$ and $\xi_0\in\mathcal{M}$, then there exists $\epsilon'=\epsilon'(n)>0$ and a modulus of continuity $\omega:\mathbb{R}^+\ra\mathbb{R}^+$ such that the following holds: if $\epsilon\leq\epsilon'$, then there exists $\xi_\psi\in\mathcal{M}$ such that $\psi-\xi_\psi\in T_{\xi_\psi}\mathcal{M}^\perp$ and $\norm{\psi-\xi_\psi}_{W^{1,\frac{2n}{n+1}}}\leq \omega(\epsilon)$.
\end{lemma}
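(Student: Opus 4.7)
The plan is to construct $\xi_\psi$ as the unique nearest point of $\mathcal M$ to $\psi$ in the $L^2$ sense, or equivalently, to solve the orthogonality system directly via the implicit function theorem. The orthogonality condition $\psi-\xi_\psi\in T_{\xi_\psi}\mathcal M^\perp$ is precisely the first-order necessary condition for a critical point of $\phi\mapsto\tfrac12\|\psi-\phi\|_{L^2}^2$ on $\mathcal M$.

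First I would exploit the finite-dimensional smooth structure of $\mathcal M$. By the explicit formula \eqref{explicit_M}, $\mathcal M$ is a real-analytic submanifold of $C^\infty(\mathbb S^n,\Sigma\mathbb S^n)$ parameterized by $(b,\xi)\in\mathbb B^{n+1}\times E_0$, with tangent space depending smoothly on the basepoint and given by $T_\xi\mathcal M=E_0\oplus Q_\xi$ at any $\xi\in E_0$ (Lemma \ref{tangent_space}). Fix a smooth local chart $\Phi:U\subset\mathbb R^N\to\mathcal M$ with $\Phi(0)=\xi_0$, where $N=\dim_{\mathbb R}\mathcal M=2^{[n/2]+1}+(n+1)$, and define
\eq{
G:W^{1,\frac{2n}{n+1}}\times U\to\mathbb R^N,\qquad G_i(\psi,u)=\int_{\mathbb S^n}\<\psi-\Phi(u),\partial_i\Phi(u)\>.
}
Then $G(\xi_0,0)=0$, and $\partial_u G(\xi_0,0)$ is the negative of the Gram matrix of $\{\partial_i\Phi(0)\}$ in $L^2$, which is nonsingular because these vectors form a basis of $T_{\xi_0}\mathcal M$. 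The map $G$ is $C^1$ jointly in $(\psi,u)$: it is linear in $\psi$ (bounded on $W^{1,p}$ since $\partial_i\Phi(u)$ is smooth and the Sobolev embedding of $W^{1,\frac{2n}{n+1}}$ places $\psi$ in $L^{\frac{2n}{n-1}}$), and smooth in $u$ because $\Phi$ and its derivatives depend smoothly on $u$ in every $L^q$-norm.

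The Banach-space implicit function theorem then yields $\epsilon'>0$ and a $C^1$ map $u:B_{\epsilon'}(\xi_0)\subset W^{1,\frac{2n}{n+1}}\to U$ with $u(\xi_0)=0$ and $G(\psi,u(\psi))\equiv 0$. Setting $\xi_\psi\coloneqq\Phi(u(\psi))$ produces the desired element of $\mathcal M$: the identities $G_i(\psi,u(\psi))=0$ say exactly that $\psi-\xi_\psi$ is $L^2$-orthogonal to a basis of $T_{\xi_\psi}\mathcal M$, hence to all of $T_{\xi_\psi}\mathcal M$ (which is finite-dimensional and consists of smooth spinors, so $L^2$-orthogonality is the correct notion here).

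For the quantitative estimate, the $C^1$-dependence of $u$ on $\psi$ gives $\|u(\psi)\|\le C\|\psi-\xi_0\|_{W^{1,p}}$, and smoothness of $\Phi$ (together with continuity of $\Phi$ into $W^{1,p}$) yields $\|\xi_\psi-\xi_0\|_{W^{1,p}}\le C'\epsilon$. The triangle inequality then gives $\|\psi-\xi_\psi\|_{W^{1,p}}\le(1+C')\epsilon$, so one may take $\omega(\epsilon)=(1+C')\epsilon$, which is a valid modulus of continuity (and in fact linear). The main, but mild, technical point is the joint $C^1$ regularity of $G$ — more precisely, the estimate $\|\partial_i\Phi(u)-\partial_i\Phi(0)\|_{L^{p'}}\to 0$ as $u\to 0$ — which follows at once from the explicit formula \eqref{explicit_M} since $\Phi$ is real-analytic into $C^\infty(\mathbb S^n,\Sigma\mathbb S^n)$.
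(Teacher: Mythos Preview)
Your implicit function theorem argument is correct and is the standard way to prove such an orthogonal projection lemma. Note, however, that the paper does not supply its own proof of this statement: it simply cites Lemma~4.1 of \cite{Figalli_Zhang_20} and moves on. Your argument is essentially the one that underlies that reference (parametrize the finite-dimensional manifold $\mathcal{M}$ locally, write out the $L^2$-orthogonality conditions as a smooth finite system, and observe that the Jacobian at the base point is the negative Gram matrix of a basis of $T_{\xi_0}\mathcal{M}$), so there is nothing to compare.

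One cosmetic remark: your opening sentence promises the \emph{nearest} point in $L^2$, but what the implicit function theorem actually delivers is a \emph{critical} point of $\phi\mapsto\tfrac12\|\psi-\phi\|_{L^2}^2$ on $\mathcal{M}$; nearness is not needed for the lemma and you correctly do not use it later.
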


\

\noindent{\it Proof of Thereom \ref{local_stability_inequality}} By Lemma \ref{lem:orthogonal}, we need only to consider such $\psi$ with $\psi =\xi +t\varphi $ and $\varphi \in T_\xi \mathcal{M}^\perp$.

\begin{proposition}\label{propA.5}
    Let $n\geq 2$. There exists a constant $c(n)>0$ and $t_0>0$ such that for any $\xi\in \mathcal{M}$ and $\psi=\xi+t\varphi$ with $\varphi\in T_\xi{\mathcal M}^\perp=(E_{0}\oplus Q_\xi)^\perp$ and $\norm{\D\varphi}_{\frac{2n}{n+1}}=1$, we have 
    \eq{
      {\Big(\int\Abs{\D\psi}^{\frac{2n}{n+1}}\Big)^{\frac{n+1}{n}}-\frac{n}{2}\omega_{n}^{1/n}\int\<\D\psi,\psi\>}\geq c(n){\inf_{\phi\in\mathcal{M}}\big(\int\abs{\D(\psi-\phi)}^{\frac{2n}{n+1}}\big)^{\frac{n+1}{n}}}, \quad \hbox{ for any } 0\le t\le t_0.
    }
\end{proposition}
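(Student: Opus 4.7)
By conformal invariance of every quantity in the claim, I would first normalize $\xi\in E_0$ with $|\xi|\equiv 1$, so that $\D\xi=\tfrac{n}{2}\xi$. Writing $p=\tfrac{2n}{n+1}$, the hypothesis $\varphi\in T_\xi\mathcal{M}^\perp\subset E_0^\perp$ gives $\int\langle\xi,\D\varphi\rangle=\tfrac{n}{2}\int\langle\xi,\varphi\rangle=0$ by self-adjointness, so $\int\langle\D\psi,\psi\rangle=\tfrac{n}{2}\omega_n+t^{2}\int\langle\D\varphi,\varphi\rangle$. Denote the left-hand side of the claim by $\mathrm{LHS}$. Since $\inf_{\phi\in\mathcal M}(\int|\D(\psi-\phi)|^{p})^{(n+1)/n}\leq t^{2}\|\D\varphi\|_{p}^{2}=t^{2}$ is immediate from $\phi=\xi$, my target reduces to $\mathrm{LHS}\geq c(n)t^{2}$.

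The plan is to apply Corollary \ref{appendix_cor} pointwise with $t\varphi$ in place of $\varphi$ and integrate; the $O(t)$-term drops by the orthogonality above, leaving
\[
\int|\D\psi|^{p}\;\geq\;\bigl(\tfrac{n}{2}\bigr)^{p}\omega_{n}+\tfrac{(1-\kappa)p}{2}X+c(\kappa)Y,
\]
where $X,Y$ are the quantities of Lemma \ref{appendix_lem} (now carrying factors $t^{2}$) and $\kappa\in(0,1)$ is a free parameter. Then, since $2/p=(n+1)/n\geq 1$, I would apply the Bernoulli inequality $(1+z)^{2/p}\geq 1+\tfrac{2}{p}z$ together with the identity $\tfrac{n+1}{n}\cdot\tfrac{p}{2}=1$ to upgrade this to
\[
\Bigl(\int|\D\psi|^{p}\Bigr)^{(n+1)/n}\;\geq\;\bigl(\tfrac{n}{2}\bigr)^{2}\omega_{n}^{(n+1)/n}+(1-\kappa)\bigl(\tfrac{n}{2}\bigr)^{2-p}\omega_{n}^{1/n}(X+\gamma Y),\qquad \gamma(\kappa):=\tfrac{2c(\kappa)}{p(1-\kappa)}.
\]
Subtracting $\tfrac{n}{2}\omega_{n}^{1/n}\int\langle\D\psi,\psi\rangle$ will cancel the leading constants and leave $\mathrm{LHS}\geq(1-\kappa)(\tfrac{n}{2})^{2-p}\omega_{n}^{1/n}(X+\gamma Y)-\tfrac{n}{2}\omega_{n}^{1/n}t^{2}\int\langle\D\varphi,\varphi\rangle$. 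I would then set $t_{0}:=\delta(n,\gamma)$ from Lemma \ref{appendix_lem}, invoke that lemma with $\gamma_{0}=\gamma$, and choose $\kappa$ so small that $(1-\kappa)(\tfrac{n}{2}+\tfrac{c_{0}}{2})-\tfrac{n}{2}\geq\tfrac{c_{0}}{4}$, arriving at $\mathrm{LHS}\geq\tfrac{c_{0}}{4}\omega_{n}^{1/n}t^{2}\int\langle\D\varphi,\varphi\rangle$.

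The hard part, which I expect to dominate the technical work, is that $\int\langle\D\varphi,\varphi\rangle$ is sign-indefinite on $T_\xi\mathcal M^{\perp}$---nontrivial $E_{-k}$-components of $\varphi$ contribute negatively---so the bound just obtained does not by itself deliver coercivity of order $t^{2}$. My plan to overcome this is to preserve the sharper information hidden in Corollary \ref{appendix_cor}: the refined convexity estimate of \cite{Figalli_Zhang_20}*{Lemma 2.1} (already used in the proof of Lemma \ref{appendix_lem}) shows that on the region $R=\{t|\D\varphi|\leq n/2\}$ the $X$-integrand dominates a multiple of $(\tfrac{n}{2})^{p-2}t^{2}|\D\varphi|^{2}$, while on $S=R^{c}$ (of measure $O(t^{p})$) the $Y$-integrand dominates $t^{p}|\D\varphi|^{p}$. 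Threading these refined pointwise bounds through the computation will promote the output to $X+\gamma Y\geq c(n)(\tfrac{n}{2})^{p-2}t^{2}\int|\D\varphi|^{2}$; then Remark \ref{rem_add}, giving $\int|\D\varphi|^{2}\geq(\tfrac{n}{2}+c_{0})\int\langle\D\varphi,\varphi\rangle$ on $T_\xi\mathcal{M}^{\perp}$, will let me absorb the negative term $-\tfrac{n}{2}\omega_{n}^{1/n}t^{2}\int\langle\D\varphi,\varphi\rangle$ uniformly in $\varphi$, producing $\mathrm{LHS}\geq c(n)t^{2}\int|\D\varphi|^{2}$. A final Hölder inequality on the compact sphere, $\int|\D\varphi|^{2}\geq \omega_{n}^{-1/n}\|\D\varphi\|_{p}^{2}=\omega_{n}^{-1/n}$, then closes the argument with $\mathrm{LHS}\geq c(n)t^{2}$.
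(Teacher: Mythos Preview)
Your reduction to $\mathrm{LHS}\geq c(n)t^{2}$, the vanishing of the $O(t)$-term via $\varphi\in E_0^\perp$, and the Bernoulli step to pass directly to the $(n+1)/n$-power are all correct; this is a legitimate variant of the paper's route, which instead works at the $p$-level with $\int|\D\psi|^{p}-(\tfrac n2\omega_n^{1/n})^{p/2}(\int\langle\D\psi,\psi\rangle)^{p/2}$ and only converts at the very end via convexity of $s\mapsto s^{2/p}$. You also correctly isolate the real difficulty: $\int\langle\D\varphi,\varphi\rangle$ is sign-indefinite on $T_\xi\mathcal M^\perp$.

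The gap is in your proposed remedy. You plan to use the refined pointwise bound from \cite{Figalli_Zhang_20}*{(2.2)} to obtain $X+\gamma Y\geq c(n)(\tfrac n2)^{p-2}t^{2}\int|\D\varphi|^{2}$ and then absorb via Remark~\ref{rem_add}. This fails for two reasons. First, the constant $c(n)$ delivered by that pointwise inequality is a fixed structural constant with no relation to the spectral gap $c_0$; there is no reason to expect $(1-\kappa)c(n)\geq \tfrac{n}{n+2c_0}$, so the intended absorption does not follow. Second, and more fundamentally, for $\varphi\in W^{1,p}$ with only $\|\D\varphi\|_{p}=1$ assumed, the integral $\int|\D\varphi|^{2}$ may well be infinite (indeed on the region $\{t|\D\varphi|>n/2\}$ one has $t^{p}|\D\varphi|^{p}=(t|\D\varphi|)^{p-2}t^{2}|\D\varphi|^{2}<(\tfrac n2)^{p-2}t^{2}|\D\varphi|^{2}$, so the $Y$-integrand does \emph{not} control $(\tfrac n2)^{p-2}t^{2}|\D\varphi|^{2}$ there). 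Since $X+\gamma Y$ is finite, the claimed inequality cannot hold in general, and your final H\"older step $\int|\D\varphi|^{2}\geq\omega_n^{-1/n}$ never gets off the ground.

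The paper resolves this without ever producing an $L^{2}$-norm. One uses Lemma~\ref{appendix_lem} in the direction
\[
-t^{2}\!\int\langle\D\varphi,\varphi\rangle\;\geq\;-\frac{1}{(\tfrac n2)^{p-2}(\tfrac n2+\tfrac{c_0}{2})}\bigl(X+\gamma_0 Y\bigr),
\]
which is valid regardless of the sign of the left side; choosing first $\kappa$ small so that the net coefficient of $X$ stays positive (namely $\kappa<\tfrac{c_0}{n+c_0}$), and then $\gamma_0$ small so that the net coefficient of $Y$ remains at least $\tfrac{c(\kappa)}{2}$, one discards the nonnegative $X$-term and keeps coercivity from the min-integral $Y$. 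Splitting $Y$ over $\{t|\D\varphi|\geq\tfrac n2\}$ and its complement, H\"older on the latter region gives $Y\geq C(n)^{-1}t^{2}(\int|\D\varphi|^{p})^{2/p}=C(n)^{-1}t^{2}$, which is exactly the missing $t^{2}$-coercivity. Your argument can be repaired by routing through $Y$ in this way rather than through $\int|\D\varphi|^{2}$.
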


\begin{proof}
     First of all, we have 
     \eq{\label{eq_AA1}
         \inf_{\phi\in\mathcal{M}}\Big(\int\abs{\D(\psi-\phi)}^{\frac{2n}{n+1}}\Big)^{\frac{n+1}{n}} \leq \Big(\int\abs{\D(\psi-\xi)}^{\frac{2n}{n+1}}\Big)^{\frac{n+1}{n}}=t^2\Big(\int\abs{\D\varphi}^{\frac{2n}{n+1}}\Big)^{\frac{n+1}{n}}.
     }
     We denote $p=\frac{2n}{n+1}$ and $\abs{\xi}=a$ as above. By Corollary \ref{appendix_cor} we have
     \eq{
        \int\abs{\D\psi}^p &\geq \big(\frac{n}{2}a\big)^p\omega_n + \frac{1-\kappa}{2}pt^2\int\Big( \big(\frac{n}{2}a\big)^{p-2}\abs{\D\varphi}^2 + (p-2)\abs{w}^{p-2}\Big(\frac{\frac{n}{2}a - \abs{\D\psi}}{t}\Big)^2 \Big)\\
        &\quad+ c(\kappa)\int{\rm min}\Big\{t^p\abs{\D\varphi}^p, \big(\frac{n}{2}a\big)^{p-2}t^2\abs{\D\varphi}^2\Big\}.
    }
   It is clear 
    \eq{
        \int\<\D\psi,\psi\> = \frac{n}{2}a^2\omega_n + t^2\int\<\D\varphi,\varphi\>.
    }
    For small $\abs{t}$ we have
    \eq{
        \Big( \int\<\D\psi,\psi\> \Big)^{\frac{p}{2}} = \Big( \frac{n}{2}a^2\omega_n \Big)^{\frac{p}{2}} + \frac{p}{2}\Big( \frac{n}{2}a^2\omega_n \Big)^{\frac{p}{2}-1} t^2\int\<\D\varphi,\varphi\> + O(t^4).
    }
    Hence
    \begin{align}
        &\int\Abs{\D\psi}^p - \Big(\frac{n}{2}\omega_{n}^{1/n}\Big)^{\frac{p}{2}}\Big(\int\<\D\psi,\psi\>\Big)^{\frac{p}{2}}\\
        &\geq \frac{1-\kappa}{2}pt^2\int\Big( \big(\frac{n}{2}a\big)^{p-2}\abs{\D\varphi}^2 + (p-2)\abs{w}^{p-2}\Big(\frac{\frac{n}{2}a - \abs{\D\psi}}{t}\Big)^2 \Big)\\
        & + c(\kappa)\int{\rm min}\Big\{t^p\abs{\D\varphi}^p, \big(\frac{n}{2}a\big)^{p-2}t^2\abs{\D\varphi}^2\Big\} - \frac{p}{2}\big(\frac{n}{2}\big)^{p-1}a^{p-2} t^2\int\<\D\varphi,\varphi\> + O(t^4).
    \end{align}
    Given any $\gamma_0>0$, Lemma \ref{appendix_lem} implies for small enough $\abs{t}$
    \eq{
        &\big(\frac{n}{2}a\big)^{p-2}t^2\int\abs{\D\varphi}^2 + (p-2)\int\abs{w}^{p-2}\big(\abs{\D\psi} - \frac{n}{2}a\big)^2 + \gamma_0\int{\rm min}\Big\{ t^p\abs{\D\varphi}^p, \big(\frac{n}{2}a\big)^{p-2}t^2\abs{\D\varphi}^2 \Big\}\\
        &\quad \geq \big(\frac{n}{2}a\big)^{p-2}\big(\frac{n}{2} + \frac{c_0}{2}\big)t^2\int\<\D\varphi,\varphi\>,
    }
    where $w$ corresponds to $t\varphi$ as in \eqref{def_w}.
    Hence
    \begin{align}
        & \quad \int\Abs{\D\psi}^p - \Big(\frac{n}{2}\omega_{n}^{1/n}\Big)^{\frac{p}{2}}\Big(\int\<\D\psi,\psi\>\Big)^{\frac{p}{2}}\\
        &\geq \Big( \frac{1-\kappa}{2}p - \frac{\frac{p}{2}\big(\frac{n}{2}\big)^{p-1}a^{p-2}}{\big(\frac{n}{2}a\big)^{p-2}\big(\frac{n}{2} + \frac{c_0}{2}\big)} \Big) \int\Big( \big(\frac{n}{2}a\big)^{p-2}\abs{\D\varphi}^2 + (p-2)\abs{w}^{p-2}\Big(\frac{\frac{n}{2}a - \abs{\D\psi}}{t}\Big)^2 \Big)\\
        & \quad + \Big( c(\kappa) - \frac{\gamma_0}{\big(\frac{n}{2}a\big)^{p-2}\big(\frac{n}{2} + \frac{c_0}{2}\big)} \Big)\int{\rm min}\Big\{t^p\abs{\D\varphi}^p, \big(\frac{n}{2}a\big)^{p-2}t^2\abs{\D\varphi}^2\Big\} + O(t^4).
    \end{align}
    First choosing small enough $\kappa>0$ such that
    \eq{
        \frac{1-\kappa}{2}p - \frac{\frac{p}{2}\big(\frac{n}{2}\big)^{p-1}a^{p-2}}{\big(\frac{n}{2}a\big)^{p-2}\big(\frac{n}{2} + \frac{c_0}{2}\big)}>0
    }
   and  then choosing small enough $\gamma_0>0$ such that
    \eq{
        c(\kappa) - \frac{\gamma_0}{\big(\frac{n}{2}a\big)^{p-2}\big(\frac{n}{2} + \frac{c_0}{2}\big)} \geq \frac{c(\kappa)}{2},
    }
we have
    \eq{\label{appendix_final}
        \int\Abs{\D\psi}^p - \Big(\frac{n}{2}\omega_{n}^{1/n}\Big)^{\frac{p}{2}}\Big(\int\<\D\psi,\psi\>\Big)^{\frac{p}{2}} \geq \frac{c(\kappa)}{2} \int{\rm min}\Big\{t^p\abs{\D\varphi}^p, \big(\frac{n}{2}a\big)^{p-2}t^2\abs{\D\varphi}^2\Big\}.
    }
    Since $p-2<0$ we have
    \eq{
        \int{\rm min}\Big\{t^p\abs{\D\varphi}^p, \big(\frac{n}{2}a\big)^{p-2}t^2\abs{\D\varphi}^2\Big\} = \int_{\{ t\abs{\D\varphi}\geq\frac{n}{2}a \}}t^p\abs{\D\varphi}^p + \int_{\{ t\abs{\D\varphi}<\frac{n}{2}a \}}\big(\frac{n}{2}a\big)^{p-2}t^2\abs{\D\varphi}^2.
    }
    Note that by H\"older's inequality
    \eq{
        \Big( \int_{\{ t\abs{\D\varphi}<\frac{n}{2}a \}}\abs{\D\varphi}^p \Big)^{\frac{2}{p}} &\leq \Big( \int_{\{ t\abs{\D\varphi}<\frac{n}{2}a \}}\big(\frac{n}{2}a\big)^p \Big)^{\frac{2}{p}-1} \cdot \int_{\{ t\abs{\D\varphi}<\frac{n}{2}a \}}\big(\frac{n}{2}a\big)^{p-2}\abs{\D\varphi}^2\\
        &\leq C(n)\int_{\{ t\abs{\D\varphi}<\frac{n}{2}a \}}\big(\frac{n}{2}a\big)^{p-2}\abs{\D\varphi}^2.
    }
    Therefore
    \eq{
        \int{\rm min}\Big\{t^p\abs{\D\varphi}^p, \big(\frac{n}{2}a\big)^{p-2}t^2\abs{\D\varphi}^2\Big\} \geq \int_{\{ t\abs{\D\varphi}\geq\frac{n}{2}a \}}t^p\abs{\D\varphi}^p + C(n)^{-1}\Big( \int_{\{ t\abs{\D\varphi}<\frac{n}{2}a \}}t^p\abs{\D\varphi}^p \Big)^{\frac{2}{p}}.
    }
    Since we have normalized $\norm{\D\varphi}_p=1$, for small enough $\abs{t}$, together with \eqref{appendix_final} we have
    \eq{\label{eq_AA2}
        \int\Abs{\D\psi}^p - \Big(\frac{n}{2}\omega_{n}^{1/n}\Big)^{\frac{p}{2}}\Big(\int\<\D\psi,\psi\>\Big)^{\frac{p}{2}} \geq C(n)^{-1}\Big( \int t^p\abs{\D\varphi}^p \Big)^{\frac{2}{p}} = C(n)^{-1}t^2\Big( \int \abs{\D\varphi}^p \Big)^{\frac{2}{p}}.
    }
    Finally, since $\frac{2}{p}>1$ we have
    \eq{
        \Big(\int\Abs{\D\psi}^p\Big)^{\frac{2}{p}} - \frac{n}{2}\omega_{n}^{1/n}\int\<\D\psi,\psi\> \geq c(n) \left(\int\Abs{\D\psi}^p - \Big(\frac{n}{2}\omega_{n}^{1/n}\Big)^{\frac{p}{2}}\Big(\int\<\D\psi,\psi\>\Big)^{\frac{p}{2}}\right)
    }
    for some constant $c(n)>0$, which, together with \eqref{eq_AA1} and \eqref{eq_AA2}, implies we complete the proof.

\end{proof}


\section{A further functional}

From results in Section 5 and as an application of Theorem \ref{global_stability_inequality}, we  study the following 
\eq{
\label{Fun_a} 
J_a(\psi)\coloneqq  \frac {\left(\int|\D \psi|^{\frac {2n}{n+1}}\right)^{\frac {n+1}n}}{(1-a)\int\langle\D\psi, 
\psi\rangle+ a\cdot\frac{n}{2}\omega_n^{1/n}\|\psi\|_{L^{\frac{2n}{n-1}}}^2}, \quad a\in [0,1]
}
and 
\eq{
\inf \left\{J_a (\psi) \,\Big|\, (1-a)\int\langle\D\psi, 
\psi\rangle+ a\cdot\frac{n}{2}\omega_n^{1/n}\|\psi\|_{L^{\frac{2n}{n-1}}}^2 >0\right\}.
}
It is clear that the infinum is positive and 
 all elements in $\mathcal M$ are critical points of $J_a$. 
When $a=0$ the optimizer set is  $\mathcal{M}$ and $\inf_{\psi\neq0} J_a(\psi)=J_a(\xi)=\frac{n}{2}\omega_n^{1/n}$, while when $a=1$ it is not, as proved 
above.  It is an interesting question to determine 
for which $a$ the optimizer set is $\mathcal{M}$.

One can easily obtain the (formal) second variation formula of $J_a$ at $\xi\in E_0$ with $\abs{\xi}=1$ as in Section 4.
\begin{proposition}
    For any $\varphi\in T_{\xi}\mathcal{M}^{\perp}$ we have
    \eq{
        \frac{\rd^2}{\rd t^2}\Big|_{t=0}J_a(\xi+t\varphi)=C(n)\Bigg\{ &\frac{2}{n}\int\abs{\D\varphi}^2
        -\frac{4}{n(n+1)}\int\<\xi,\D\varphi\>^2 - \frac{an}{n-1}\int\<\xi,\varphi\>^2\\
        &-\frac{an}{2}\int\abs{\varphi}^2 -(1-a)\int\<\D\varphi,\varphi\> \Bigg\},
    }
    where $C(n)>0$ is some constant.
\end{proposition}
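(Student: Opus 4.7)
The plan is to follow the same scheme as the proof of Proposition \ref{second_variation}, exploiting the fact that $V_a$ depends linearly on $a$. Write $J_a = U/V_a$ with $U(\psi) = (\int|\D\psi|^{\frac{2n}{n+1}})^{\frac{n+1}{n}}$ and decompose $V_a = (1-a)V_0 + a\cdot\tfrac{n}{2}\omega_n^{1/n}\,W$, where $V_0(\psi) = \int\langle\D\psi,\psi\rangle$ and $W(\psi) = \|\psi\|_{L^{2n/(n-1)}}^2$. First I would verify that every $\xi\in E_0$ with $|\xi|=1$ remains a critical point of $J_a$ for every $a\in[0,1]$: a short computation using $\D\xi = \frac{n}{2}\xi$ and $|\xi|\equiv 1$ gives $V_0'(\xi)(\varphi) = n\int\langle\xi,\varphi\rangle$ and $W'(\xi)(\varphi) = 2\omega_n^{(n-1)/n-1}\int\langle\xi,\varphi\rangle$, so $V_a'(\xi)(\varphi) = n\int\langle\xi,\varphi\rangle$ is $a$-independent and coincides with the first variation for which $\xi$ is already critical. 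A similar check gives $V_a(\xi) = \tfrac{n}{2}\omega_n$ independently of $a$. Consequently the clean identity $J_a''(\xi) = U''(\xi)/V_a(\xi) - U(\xi)V_a''(\xi)/V_a(\xi)^2$ holds at $\xi$ with no first-variation cross terms.

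The only genuinely new ingredient is $W''(\xi)(\varphi,\varphi)$. Since $|\xi|\equiv 1$, Taylor-expanding $|\xi+t\varphi|^q = (1+2t\langle\xi,\varphi\rangle+t^2|\varphi|^2)^{q/2}$ with $q=2n/(n-1)$ and then the functional $(\int|\xi+t\varphi|^q)^{2/q}$ produces, after substituting $q-2=2/(n-1)$ and $2/q-1=-1/n$, the expression
\[
W''(\xi)(\varphi,\varphi) = -\tfrac{4}{n-1}\omega_n^{-(n+1)/n}\Bigl(\int\langle\xi,\varphi\rangle\Bigr)^{2} + 2\omega_n^{-1/n}\Bigl[\tfrac{2}{n-1}\int\langle\xi,\varphi\rangle^2 + \int|\varphi|^2\Bigr].
\]
The remaining pieces $U(\xi) = \tfrac{n^2}{4}\omega_n^{(n+1)/n}$, the formula for $U''(\xi)$, and $V_0''(\xi)(\varphi,\varphi) = 2\int\langle\D\varphi,\varphi\rangle$ are already recorded in Proposition \ref{second_variation}.

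Finally I would assemble everything and simplify. On $T_\xi\mathcal{M}^\perp\subset E_0^\perp$ one has $\int\langle\xi,\varphi\rangle = 0$, which kills the $(\int\langle\xi,\varphi\rangle)^2$ contributions coming from both $U''(\xi)$ and $W''(\xi)$, leaving only $\int\langle\xi,\varphi\rangle^2$ and $\int|\varphi|^2$ from the $W''$ piece. Factoring out the common factor $2\omega_n^{(1-n)/n}$ should then deliver exactly the stated formula with $C(n) = 2\omega_n^{(1-n)/n}>0$. The main obstacle is not conceptual but purely algebraic: the various $\omega_n$ exponents produced by $U''$, $W''$ and the normalization $V_a(\xi)=\tfrac{n}{2}\omega_n$ must be combined with care, and I would sanity-check the final expression by setting $a=0$ and confirming that it reduces to Proposition \ref{second_variation} verbatim.
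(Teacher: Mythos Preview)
Your proposal is correct and follows precisely the approach the paper indicates: the paper gives no detailed proof, only the remark ``One can easily obtain the (formal) second variation formula of $J_a$ at $\xi\in E_0$ with $|\xi|=1$ as in Section 4,'' and your scheme of writing $J_a=U/V_a$, checking that $\xi$ remains critical (via $V_a(\xi)=\tfrac{n}{2}\omega_n$ and $V_a'(\xi)(\varphi)=n\int\langle\xi,\varphi\rangle$ being $a$-independent), computing $W''(\xi)$, and then using $\int\langle\xi,\varphi\rangle=0$ on $T_\xi\mathcal{M}^\perp$ is exactly that. Your identification $C(n)=2\omega_n^{(1-n)/n}$ and the sanity check against Proposition~\ref{second_variation} at $a=0$ are both right.
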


Now we prove the spectral gap theorem for small $a\geq0$.

\begin{proposition}
\label{Application_local}
    There exists $a_1=a_1(n)>0$ such that for any $a\in[0,a_1)$, there exists $c(n)>0$, such that
    \eq{
        \frac{\rd^2}{\rd t^2}\Big|_{t=0}J_a(\xi+t\varphi)  
        \geq c(n)\int\abs{\D\varphi}^2,\quad\forall\,\varphi\in (E_{0}\oplus Q_\xi)^{\perp}.
    }
\end{proposition}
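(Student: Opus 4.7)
The plan is to treat Proposition \ref{Application_local} as a small-$a$ perturbation of the $a=0$ case, for which Theorem \ref{spectral_gap} already supplies a uniform spectral gap on $(E_0\oplus Q_\xi)^\perp$. Let me write $G_a(\varphi)$ for the bracketed expression in the second variation formula of $J_a$, and write $G_0$ for its value at $a=0$; then Theorem \ref{spectral_gap} says $G_0(\varphi)\ge c(n)\int\abs{\D\varphi}^2$ for every $\varphi\in(E_0\oplus Q_\xi)^\perp$, and the entire task is to verify that the extra $a$-terms are small relative to this gap.

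A direct subtraction gives
\[
G_a(\varphi) - G_0(\varphi) = -\frac{an}{n-1}\int\<\xi,\varphi\>^2 - \frac{an}{2}\int\abs{\varphi}^2 + a\int\<\D\varphi,\varphi\>,
\]
which is linear in $a$. To control it I would appeal to the Dirac spectrum on $\S^n$ recalled in Section 2.3: removing $E_0$ leaves only eigenspaces of $\D$ whose eigenvalues have absolute value at least $n/2$ (the extremum $-n/2$ being attained on $E_{-1}$), so
\[
\int\abs{\varphi}^2 \le \frac{4}{n^2}\int\abs{\D\varphi}^2 \qquad \forall\,\varphi\in E_0^\perp.
\]
Combining this bound with Cauchy--Schwarz for $\abs{\int\<\D\varphi,\varphi\>}$ and with $\int\<\xi,\varphi\>^2\le \abs{\xi}^2\int\abs{\varphi}^2=\int\abs{\varphi}^2$ yields a uniform estimate
\[
\bigabs{G_a(\varphi) - G_0(\varphi)} \le a\,C(n)\int\abs{\D\varphi}^2
\]
for an explicit $C(n)>0$ and every $\varphi\in E_0^\perp$.

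Putting the two pieces together gives, for all $\varphi\in (E_0\oplus Q_\xi)^\perp$,
\[
G_a(\varphi) \;\ge\; \bigl(c(n) - a\,C(n)\bigr)\int\abs{\D\varphi}^2,
\]
so defining $a_1(n):= c(n)/(2C(n))$ forces the coefficient to be at least $c(n)/2>0$ for every $a\in[0,a_1)$, which is exactly the stated conclusion (with a new constant $c(n)/2$). Because all the genuine analysis is packaged into Theorem \ref{spectral_gap}, I do not anticipate any serious obstacle here: the only additional input is the elementary $L^2$-estimate on $E_0^\perp$ coming from the Dirac spectrum, and the argument merely quantifies how large $a$ may grow before the negative $\int\abs{\varphi}^2$ and $\int\<\xi,\varphi\>^2$ contributions --- which for $a=1$ already render $G_1$ indefinite on $Q_{-}(\xi)$ by Proposition \ref{prop5.7} --- are able to overcome the spectral gap present at $a=0$.
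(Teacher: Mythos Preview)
Your proposal is correct and follows essentially the same approach as the paper. The paper writes the second variation as the convex combination $(1-a)G_1+aG_2$ (with $G_1=G_0$ in your notation) and bounds $G_2$ from below by $-\tfrac{8}{(n+1)(n-1)}\int\abs{\D\varphi}^2$ using exactly the same ingredients you use --- the Cauchy--Schwarz inequality and the Dirac eigenvalue bound $\int\abs{\varphi}^2\le\tfrac{4}{n^2}\int\abs{\D\varphi}^2$ on $E_0^\perp$ --- so the two arguments differ only cosmetically.
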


\begin{proof}
    For short we denote
    \eq{
        G_1(\varphi) &\coloneqq \frac{2}{n}\int\abs{\D\varphi}^2
        -\frac{4}{n(n+1)}\int\<\xi,\D\varphi\>^2 -\int\<\D\varphi,\varphi\>,\\
        G_2(\varphi) &\coloneqq \frac{2}{n}\int \abs{\D\varphi}^2 - \frac{4}{n(n+1)}\int \< \xi,\D\varphi\>^2 - \frac{n}{n-1}\int \<\xi,\varphi\>^2 - \frac{n}{2}\int \abs{\varphi}^2.
    }
    Then
    \eq{
        \frac{\rd^2}{\rd t^2}\Big|_{t=0}J_a(\xi+t\varphi) = (1-a)G_1(\varphi) + a\,G_2(\varphi).
    }
    By Theorem \ref{spectral_gap} we know that $G_1(\varphi)\geq c_1(n)$ for some $c_1(n)>0$. Moreover, using the Cauchy-Schwarz inequality we have
    \eq{
        G_2(\varphi) &\geq \frac{2}{n}\int \abs{\D\varphi}^2 - \frac{4}{n(n+1)}\int \abs{\D\varphi}^2 - \frac{n}{n-1}\int \abs{\varphi}^2 - \frac{n}{2}\int \abs{\varphi}^2\\
        &\geq \frac{2}{n}\int \abs{\D\varphi}^2 - \frac{4}{n(n+1)}\int \abs{\D\varphi}^2 - \frac{n}{n-1}\cdot\frac{4}{n^2}\int \abs{\D\varphi}^2 - \frac{n}{2}\cdot\frac{4}{n^2}\int \abs{\D\varphi}^2\\
        &=-\frac{8}{(n+1)(n-1)}\int \abs{\D\varphi}^2.
    }
    Hence
    \eq{
        \frac{\rd^2}{\rd t^2}\Big|_{t=0}J_a(\xi+t\varphi) \geq \left( (1-a)c_1(n) - \frac{8a}{(n+1)(n-1)}\right)\int \abs{\D\varphi}^2.
    }
    Now it is easy to see the conclusion holds true.
\end{proof}

\begin{remark} \label{rem_B3} 
   Now using the same argument as in the proof of Theorem \ref{local_stability_inequality} one can obtain the local stability for $J_a$ at any $\psi \in \mathcal{M}$ for $ a <a_1$. 
\end{remark}

As an application of Theorem \ref{global_stability_inequality}, we prove that the optimizer set is $\mathcal{M}$ if $a$ is close to $0$.

\begin{theorem}
    There exists $a_0=a_0(n)>0$ such that for any $a\in[0,a_0)$ we have $J_a(\psi)\geq \frac{n}{2}\omega_n^{1/n}$ with equality if and only if $\psi\in\mathcal{M}$.
  \end{theorem}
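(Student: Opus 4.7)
The plan is to combine the local stability of $J_a$ around $\mathcal{M}$ (Remark \ref{rem_B3}), valid for $a<a_1$, with the global stability at $a=0$ (Theorem \ref{global_stability_inequality}), via a compactness/contradiction argument in the spirit of Section 4. The second spinorial Sobolev inequality \eqref{2nd_Sobolev} will play the auxiliary role of keeping the difference between the denominators of $J_a$ and $J_0$ uniformly controlled.

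First I would put the desired inequality in additive form. Write $A(\psi):=\bigl(\int|\D\psi|^{2n/(n+1)}\bigr)^{(n+1)/n}$, $B(\psi):=\int\langle\D\psi,\psi\rangle$, $C(\psi):=\|\psi\|_{L^{2n/(n-1)}}^2$, $D:=A-\tfrac{n}{2}\omega_n^{1/n}B$ and $E:=\tfrac{n}{2}\omega_n^{1/n}C-B$. A direct computation at a unit $\xi\in E_0$, combined with the conformal invariance of $B$ and $C$, gives $B(\phi)=\tfrac{n}{2}\omega_n^{1/n}C(\phi)$ for every $\phi\in\mathcal{M}$; thus $E$ vanishes on $\mathcal{M}$, the denominator of $J_a$ at any $\phi\in\mathcal{M}$ simplifies to $B(\phi)$, and $J_a(\phi)=\tfrac{n}{2}\omega_n^{1/n}$ for every $a\in[0,1]$. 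Moreover the target inequality $J_a(\psi)\ge\tfrac{n}{2}\omega_n^{1/n}$ is equivalent to
\[
D(\psi)\;\ge\;a\cdot\tfrac{n}{2}\omega_n^{1/n}\cdot E(\psi),
\]
with equality exactly when $\psi\in\mathcal{M}$. Recall that $D\ge 0$ by the first spinorial Sobolev inequality \eqref{spinorial-Sobolev}.

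Next I would argue by contradiction. Assume sequences $a_i\downarrow 0$ and $\psi_i$, in the domain of $J_{a_i}$, with $J_{a_i}(\psi_i)\le\tfrac{n}{2}\omega_n^{1/n}$ and either strict inequality or $\psi_i\notin\mathcal{M}$. By scaling invariance of each $J_{a_i}$, I normalize $A(\psi_i)=1$. Then H\"older's inequality together with \eqref{2nd_Sobolev} yields the uniform bounds $|B(\psi_i)|\le C_2^{-1/2}$ and $C(\psi_i)\le C_2^{-1}$, whence $E(\psi_i)\le M$ for a dimensional constant $M=M(n)$. Consequently $D(\psi_i)\le a_i\tfrac{n}{2}\omega_n^{1/n}M\to 0$, and Theorem \ref{global_stability_inequality} forces $\inf_{\phi\in\mathcal{M}}\|\D(\psi_i-\phi)\|_{L^{2n/(n+1)}}\to 0$. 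Precomposing each $\psi_i$ with a conformal diffeomorphism (which leaves $A$, $B$, $C$ and $J_a$ invariant), as in Case (1) of the proof of Theorem \ref{global_stability_inequality}, I may arrange that the nearest optimizer is a Killing spinor $\xi_i\in E_0$. Since $\|\D\xi_i\|_{L^{2n/(n+1)}}\to 1$ and $E_0$ is finite-dimensional, a subsequence converges to a nonzero $\xi_\infty\in E_0$, and then $\psi_i\to\xi_\infty$ in $W^{1,2n/(n+1)}$. For $i$ large, $a_i<a_1$ and $\psi_i$ lies inside the local-stability neighborhood of $\xi_\infty\in\mathcal{M}$ provided by Remark \ref{rem_B3}, so $J_{a_i}(\psi_i)\ge\tfrac{n}{2}\omega_n^{1/n}$ with strict inequality unless $\psi_i\in\mathcal{M}$, contradicting both alternatives.

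The main obstacle is ensuring that Remark \ref{rem_B3} produces a local-stability neighborhood and constant that remain bounded away from zero as $a\to 0$, so that it applies uniformly to the sequence $(a_i,\psi_i)$. This reduces to checking that the constants in the Appendix A argument, transcribed for $J_a$ in place of $J$, depend continuously on $a$ and remain controlled for $a$ small; this should follow because the quadratic form governing the second variation of $J_a$ on $T_\xi\mathcal{M}^\perp$ interpolates continuously between the forms of Theorem \ref{spectral_gap} and Proposition \ref{prop5.7}(2)-(3), and is uniformly coercive on $(E_0\oplus Q_\xi)^\perp$ for $a\in[0,a_1)$ by Proposition \ref{Application_local}.
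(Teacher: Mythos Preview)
Your proposal is correct and follows essentially the same route as the paper: a contradiction argument that uses the global stability Theorem \ref{global_stability_inequality} to force $\psi_i$ close to $\mathcal{M}$, then conformal invariance and finite-dimensionality of $E_0$ to upgrade this to strong $W^{1,\frac{2n}{n+1}}$-convergence to some $\xi\in E_0$, and finally the local stability of $J_a$ from Remark \ref{rem_B3} to obtain the contradiction. The only cosmetic differences are that you recast the target inequality in the additive form $D\ge a\cdot\frac{n}{2}\omega_n^{1/n}E$ (which the paper arrives at after one line of manipulation), you fold the equality characterization into the same contradiction sequence rather than treating it separately afterwards, and you make explicit the uniformity-in-$a$ of the local-stability neighborhood, which the paper leaves implicit but which indeed follows from the explicit lower bound $(1-a)c_1(n)-\frac{8a}{(n+1)(n-1)}$ in the proof of Proposition \ref{Application_local}.
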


\begin{proof}
    We prove by contradiction. Assume that there exist two sequences $\{a_i\}$ and $\{\psi_i\}$ such that $a_i\ra0$ and
    \eq{
        J_{a_i}(\psi_i)<\frac{n}{2}\omega_n^{1/n}.
    }
    Without loss of generality, we may assume that $\norm{\D\psi_i}_{\frac{2n}{n+1}}=1$. Hence by taking a subsequence $\psi_i\rightharpoonup \psi$ weakly in $W^{1,\frac{2n}{n+1}}$. Applying Theorem \ref{global_stability_inequality} we have
    \eq{
        {\bf c}_S\inf_{\phi\in\mathcal{M}}\norm{\D(\psi_i-\phi)}_{\frac{2n}{n+1}}^2 + \frac{n}{2}\omega_{n}^{1/n}\int\<\D\psi_i,\psi_i\> &\leq \norm{\D\psi_i}_{\frac{2n}{n+1}}\\
        &< (1-a_i)\frac{n}{2}\omega_{n}^{1/n}\int\langle\D\psi_i, 
        \psi_i\rangle+ a_i\cdot\frac{n^2}{4}\omega_n^{2/n}\|\psi_i\|_{L^{\frac{2n}{n-1}}}^2.
    }
    It follows
    \eq{\label{application_1}
        \frac{{\bf c}_S}{a_i}\inf_{\phi\in\mathcal{M}}\norm{\D(\psi_i-\phi)}_{\frac{2n}{n+1}}^2 < \frac{n^2}{4}\omega_n^{2/n}\|\psi_i\|_{L^{\frac{2n}{n-1}}}^2 - \frac{n}{2}\omega_{n}^{1/n} \int\<\D\psi_i,\psi_i\>.
    }
    Since $\norm{\D\psi_i}_{\frac{2n}{n+1}}=1$, 
    using the Sobolev inequalities in subsection 2.4
    we have that the right-hand side of \eqref{application_1} is uniformly bounded and  hence 
    \eq{
        \lim_{i\ra\infty}\inf_{\phi\in\mathcal{M}}\norm{\D(\psi_i-\phi)}_{\frac{2n}{n+1}} = 0.
    }That is, 
    for any $i\in\mathbb{N}$, there exists some $\phi_i\in\mathcal{M}$ such that
    \eq{
        \lim_{i\ra\infty}\norm{\D(\psi_i-\phi_i)}_{\frac{2n}{n+1}} = 0.
    }
    Then Minkowski's inequality implies that $\{\phi_i\}$ is bounded in $W^{1,\frac{2n}{n+1}}$.
    Moreover, for any $i\in\mathbb{N}$ up to a conformal transformation we may assume that
    $\phi_i\in E_0$. Since $E_0$  has finite dimension, it is clear that up to a subsequence $\phi_i\ra\xi$ strongly in $W^{1,\frac{2n}{n+1}}$ for some $\xi\in E_0$. It follows that $\psi_i$ converges strongly to $\xi$ in $W^{1,\frac {2n}{n+1}}$. 
    Now Remark \ref{rem_B3}
     yields a contradiction for small $a$. Hence there exists $a_0>0$ such that for any $a\in[0,a_0)$ we have $J_a(\psi)\geq \frac{n}{2}\omega_n^{1/n}$.
    
    Moreover, suppose equality holds for some $\psi$,
    i.e.,
      \eq{\label{eq:B3a}
        J_{a}(\psi)=\frac{n}{2}\omega_n^{1/n}
    }
    for small $a$.
    Without loss of generality we may assume that $\norm{\D\psi}_{\frac{2n}{n+1}}=1$.   Again by Theorem \ref{global_stability_inequality} and the argument leading to \eqref{application_1} we have
    \eq{
        \inf_{\phi\in\mathcal{M}}\norm{\D(\psi-\phi)}_{\frac{2n}{n+1}}^2 \leq \frac{a}{{\bf c}_S}\left(\frac{n^2}{4}\omega_n^{2/n}\|\psi\|_{L^{\frac{2n}{n-1}}}^2 - \frac{n}{2}\omega_{n}^{1/n} \int\<\D\psi,\psi\>\right).
    }
    By conformal invariance we may assume
    \eq{\label{application_2}
        \norm{\D(\psi-\xi)}_{\frac{2n}{n+1}}^2 \leq \frac{a}{{\bf c}_S}\left(\frac{n^2}{4}\omega_n^{2/n}\|\psi\|_{L^{\frac{2n}{n-1}}}^2 - \frac{n}{2}\omega_{n}^{1/n} \int\<\D\psi,\psi\>\right)
    }
    for some $\xi\in E_0$. Since the parentheses in \eqref{application_2} does not depend on $a$, we may choose $a_0$ small enough such that
    $\psi$ lies in a small neighborhood of $\xi\in \mathcal{M}$. In view of  \eqref{eq:B3a},  Remark \ref{rem_B3} implies 
     $\psi\in \mathcal M$. Hence equality holds if and only if $\psi\in\mathcal{M}$. \end{proof}

Finally, we prove that the optimizer set is not $\mathcal{M}$ if $a$ is close to $1$.

\begin{proposition}
    For $a\in(1-\frac{2}{n(n+1)},1]$ we have $\inf_{\psi\neq0} J_a(\psi)<\frac{n}{2}\omega_n^{1/n}$.
\end{proposition}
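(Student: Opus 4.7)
The plan is to exhibit a one-parameter family around a normalized $-\tfrac12$-Killing spinor on which $J_a$ strictly decreases, using the direction already singled out in Section~5. Fix $\xi\in E_0$ with $|\xi|\equiv 1$ and a normalized $f\in P_1$, so $\int f^2=1$ and $\int|\rd f|^2=n$. I choose as test direction
\eq{
\varphi\coloneqq -f\xi+\rd f\cdot\xi \,\in\, \mathrm{proj}_{E_{-1}}(Q_{-})\subset E_{-1},
}
precisely the subspace on which $F$ was shown in Section~5 to strictly drop below $\tfrac{n^2}{4}\omega_n^{2/n}$ at $a=1$. Since $\varphi\in E_{-1}$, $\D\varphi=-\tfrac{n}{2}\varphi$, and the pointwise identities $|\xi|\equiv 1$ and $\langle\xi,\rd f\cdot\xi\rangle=0$ make every integral appearing in the second variation explicit.

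The next step is to insert $\varphi$ into the second variation formula for $J_a$ given above Proposition~B.2. The relevant integrals reduce to $\int|\varphi|^2=n+1$, $\int|\D\varphi|^2=\tfrac{n^2(n+1)}{4}$, $\int\langle\xi,\varphi\rangle^2=1$, $\int\langle\xi,\D\varphi\rangle^2=\tfrac{n^2}{4}$, and $\int\langle\D\varphi,\varphi\rangle=-\tfrac{n(n+1)}{2}$, each a direct consequence of $|\varphi|^2=f^2+|\rd f|^2$ and the eigenvalue identities. Although the formula in Appendix~B is stated only for $\varphi\in T_\xi\mathcal{M}^\perp$, its derivation at a critical point is purely algebraic (it uses only $U'V=UV'$ at $\xi$), and the same expression therefore governs $\tfrac{\rd^2}{\rd t^2}|_{t=0}J_a(\xi+t\varphi)$ in any direction, including the present $\varphi$ even though it does not lie in $T_\xi\mathcal{M}^\perp$.

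Substituting and collecting terms yields
\eq{
\frac{\rd^2}{\rd t^2}\Big|_{t=0}J_a(\xi+t\varphi)=C(n)\Bigl\{n(n+1)(1-a)-\tfrac{n}{n+1}-\tfrac{an}{n-1}\Bigr\},
}
and an elementary rearrangement shows that the bracket is strictly negative exactly when $a>\tfrac{(n+2)(n-1)}{n(n+1)}=1-\tfrac{2}{n(n+1)}$. For such $a<1$, Taylor expansion at the critical point $\xi$ gives $J_a(\xi+t\varphi)<\tfrac{n}{2}\omega_n^{1/n}$ for all sufficiently small $t>0$, which proves $\inf J_a<\tfrac{n}{2}\omega_n^{1/n}$. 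The endpoint $a=1$ is already covered by the strict inequality $F(\xi+\varphi)<\tfrac{n^2}{4}\omega_n^{2/n}$ established for the chosen $\varphi$ in Section~5, since $J_1=\tfrac{2}{n}\omega_n^{-1/n}F$.

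The main obstacle is identifying the correct test direction; this is dictated by the index analysis of $\xi$ as a critical point of $F$ carried out in Section~5, which already pinpointed $\mathrm{proj}_{E_{-1}}(Q_{-})$ as the locus of negativity. The remaining computation is routine bookkeeping whose sole role is to pin down the precise threshold $1-\tfrac{2}{n(n+1)}$ as the value of $a$ at which the indefinite contribution $-a\bigl(\tfrac{n}{n-1}\int\langle\xi,\varphi\rangle^2+\tfrac{n}{2}\int|\varphi|^2\bigr)$ first overcomes the remaining positive part of the second variation on the chosen direction.
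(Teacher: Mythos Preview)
Your proof is correct and reaches the same threshold $1-\tfrac{2}{n(n+1)}$, but by a different and arguably more direct route than the paper's. The paper takes the test direction $\varphi_P=nf\xi-(n-1)\rd f\cdot\xi$, which lies in $T_\xi\mathcal{M}^\perp$; it then feeds this into the machinery of Sections~4--5, invoking the identities \eqref{G_1_1}, \eqref{G_1_2}, \eqref{G_2_1}, \eqref{G_2_2} to evaluate $G_1$ and $G_2$. Your choice $\varphi_S=-f\xi+\rd f\cdot\xi\in E_{-1}$ is a pure eigenspinor, so $\D\varphi_S=-\tfrac n2\varphi_S$ and every integral is computed in one line from $|\varphi_S|^2=f^2+|\rd f|^2$; no appeal to Proposition~\ref{intro_estimate} or the equality cases is needed. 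It is a pleasant coincidence that the ratio $G_1(\varphi)/G_2(\varphi)$ equals $-\tfrac{(n+2)(n-1)}{2}$ for both test directions, which is why the two arguments yield the identical threshold.

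One point deserves a sharper justification. Your direction is \emph{not} in $T_\xi\mathcal{M}^\perp$ (its $E_{-1}$ component coincides with the $E_{-1}$ projection of $Q_\xi$), and the formula of Proposition~B.1 is stated only for $\varphi\in T_\xi\mathcal{M}^\perp$. Your appeal to ``$U'V=UV'$ at a critical point'' is not quite enough: the full second variation carries an additional term proportional to $(\int\langle\xi,\varphi\rangle)^2$ (compare Proposition~\ref{second_variation} and \eqref{second_F}), which has been dropped in Proposition~B.1. The formula you use is in fact valid for all $\varphi\in E_0^\perp$, and since $\varphi_S\in E_{-1}\subset E_0^\perp$ (equivalently $\int\langle\xi,\varphi_S\rangle=-\int f=0$) the extra term vanishes. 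Stating this explicitly would close the only gap in the write-up. The paper sidesteps this issue by choosing $\varphi_P\in T_\xi\mathcal{M}^\perp$ from the start, at the cost of a longer computation.
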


\begin{proof}
    It suffices to show that there exists some $\varphi$ such that
    \eq{
        \frac{\rd^2}{\rd t^2}\Big|_{t=0}J_a(\xi+t\varphi)<0.
    }
    We choose $\varphi=nf\xi-(n-1)\rd f\cdot\xi$ with $f\in P_1$. One can easily check that $\varphi\in T_{\xi}\mathcal{M}^{\perp}$. Let
    \eq{
        \varphi_1= n f\xi +\rd f \cdot \xi, \qquad \varphi_{-1} = -f\xi+\rd f \cdot \xi.
    }
    Then $\varphi = \frac{1}{n+1}(\varphi_1 - n^2\varphi_{-1})$. From \eqref{G_1_1} \eqref{G_1_2} \eqref{G_2_1} \eqref{G_2_2} we have
    \eq{
        G_1(\varphi) &= \frac 4{n^2(n+1)(n+2)}
        \int \Big(\big\<\xi, \frac{n+2}{2(n+1)}\varphi_1\big\>- n(n+2) \big\<\xi, \frac{n^3}{2(n+1)}\varphi_{-1}\big\>\Big)^2\\
        &= \frac{n+2}{n^2(n+1)^3} \int\<\xi, \varphi_1-n^4\varphi_{-1}\>^2
    }
    and
    \eq{
        G_2(\varphi) &= -\frac{2}{n^2(n-1)(n+1)}\int\Big(\big\<\xi, \frac{1}{n+1}\varphi_{1}\big\>+n^2\big\<\xi, -\frac{n^2}{n+1}\varphi_{-1}\big\>\Big)^2\\
        &= -\frac{2}{n^2(n-1)(n+1)^3} \int\<\xi, \varphi_1-n^4\varphi_{-1}\>^2.
    }
    Hence
    \eq{
        \frac{\rd^2}{\rd t^2}\Big|_{t=0}J_a(\xi+t\varphi) &= (1-a)G_1(\varphi) + a\,G_2(\varphi)\\
        &=\left((1-a)(n+2)-\frac{2a}{n-1}\right)\frac{1}{n^2(n+1)^3}\int\<\xi, \varphi_1-n^4\varphi_{-1}\>^2\\
        &= \left(n+2-\frac{n(n+1)}{n-1}a\right)\frac{1}{n^2(n+1)^3}\int\<\xi, \varphi_1-n^4\varphi_{-1}\>^2.
    }
    Since $a>1-\frac{2}{n(n+1)}$ and
    \eq{
        \<\xi, \varphi_1-n^4\varphi_{-1}\> = nf + n^4f \not\equiv0,
    }
    we have
    \eq{
        \frac{\rd^2}{\rd t^2}\Big|_{t=0}J_a(\xi+t\varphi)<0.
    }
\end{proof}

The number $1-\frac{2}{n(n+1)}$ here may not be optimal. It remains as an interesting problem to determine the optimal threshold.
In other words, we ask what is
\eq{   s_0 \coloneqq \sup \{ a\in[0,1] \,|\, \mathcal{M} \text{ is the optimizer set of } J_a(\psi) \}. }
This problem is closely related to symmetry and symmetry breaking of the spinorial Caffarelli-Kohn-Nirenberg inequalities studied recently in \cite{DEFL25}.


\section{Spinor fields in \texorpdfstring{$\R^n$}{Rn}}
In this Appendix, for the reader's convenience we discuss solutions of \eqref{eq5.2a} in $\R^n$.

First of all, due to the conformality of the stereographic projection $\S^n\to \R^n$, all conformally invariant quantities considered above can be written in the same forms in $\R^n$. Hence the spinorial Yamabe equation \eqref{Spin_Yamabe} has the same form 
\eq{\label{Yamabe_R}
\D \psi =\mu |\psi|^{\frac 2 {n-1} }\psi,  \quad \hbox{ in }\R^n,
}
with positive $\mu$. The set  $\mathcal M$ is  conformally transformed 
into a set ${\mathcal M}_{\R}$, which consists of
\eq{\label{solution_R}
\psi^- _{\Phi_0}=\Big (\frac {2}{1 +|x|^2}\Big)^{\frac n2}\big(1 -{x}  \big) \cdot \Phi_0, \quad \Phi_0 \in \mathbb{C}^{2^{[n\slash 2]}}, 
}
and its translations.
\eqref{eq5.2} has also the same form
\eq{\label{eq5.2a}
    \D\left( \abs{\D\psi}^{-\frac{2}{n+1}} \D\psi \right) = \tilde \mu \abs{\psi}^{\frac{2}{n-1}} \psi, \quad \hbox{ in }\R^n.
}
It is easy to see that $\psi_{\Phi_0}$ also satisfies  \eqref{eq5.2a}, because
\[
|\D \psi^-_{\Phi_0}|^{-\frac 2 {n+1}} \D \psi^-_{\Phi_0} = \mu |\D \psi^-_{\Phi_0}|^{-\frac 2 {n+1}}|\psi_{\Phi_0}|^{\frac 2{n-1}}\psi^-_{\Phi_0} =c\psi^-_{\Phi_0},
\] for some $c$, which one can easily determine. \eqref{eq5.2a} has more solutions.
It is known that
\eq{\label{solution_R2}
\psi^+_{\Phi_0}=\Big (\frac {2}{1 +|x|^2}\Big)^{\frac n2}\big(1 + {x}  \big) \cdot \Phi_0, \quad \Phi_0 \in \mathbb{C}^{2^{[n\slash 2]}}, 
}
is a solution of
\eq{\label{Solution2}
\D \psi =-\mu |\psi|^{\frac 2 {n-1}} \psi.}
One can similarly check that $\psi^+_{\Phi_0}$ is also a solution of \eqref{eq5.2a}.
Consider now
\eq{\label{eq7.1} \varphi \coloneqq \psi^-_{\Phi_0} +\psi^+_{\Phi_1}.
}
If in addition 
\eq{\label{eq7.2}\<\Phi_0, \Phi_1\>=0, \quad 
\<\Phi_0, e_i\cdot \Phi_1\>=0, \quad \forall \,i=1,2\dots, n,
}
then one can check that $\varphi$ defined by \eqref{eq7.1} is also a solution of \eqref{eq5.2a}, in view of
the fact that \eqref{eq7.2} implies
\eq{\label{eq7.4}
|(1-x)\cdot \Phi_0+(1+x)\cdot \Phi_1|^2=(1+|x|^2) (|\Phi_0|^2 +|\Phi_1|^2) .
}
We denote the set of all such solutions and their conformal transformations by
$\widetilde {\mathcal M}_\R$, which is just the set of solutions given in \cite{FL2}. Any such solution has the same value of $F$, i.e.
\[
F(\varphi)=\frac  {n^2}{4}  \omega_n^{2\slash n}.
\]
However, we observe that if we take $\Phi_1=e_1\cdot\Phi_0$, which violates one of equations in \eqref{eq7.2}, then $F$ will be smaller. Letting
$\tilde \varphi\coloneqq \psi^-_{\Phi_0} +\psi^+_{\Phi_1}$ with $\Phi_1=e_1\cdot \Phi_0$, we have
\eq{\label{fact}
F(\tilde \varphi)< \frac {n^2} 4\omega_n^{2\slash n}.
}
One can check it by a direction computation, or use a similar idea given in Section 5. The main reason is that now
\eq{\label{eq7.5}
|(1-x)\cdot \Phi_0+(1+x)\cdot \Phi_1|^2=2(1+|x|^2) |\Phi_0|^2-4x_1 |\Phi_0|^2,
}
which  is not proportional 
to $1+|x|^2$, while the term \eqref{eq7.4} is. Since the volume of $\R^n$ is unbounded, we use the Cauchy-Schwarz inequality as follows
\eq{
\Big(\int_{\R^n} |\D \tilde \varphi |^{\frac {2n}{n+1}} \Big)^{\frac {n+1}n}\le \Big(\int |\D \tilde \varphi|^2\frac{1+|x|^2} 2\Big)\Big(\int \Big(\frac{ 2}{ 1+|x|^2}\Big)^{n}\Big)^{\frac 1 n}
}
and
\eq{
\Big(\int_{\R^n} |\tilde \varphi|^{\frac {2n}{n-1}}\Big)^{\frac {n-1}n} \ge \Big(  \int |\tilde \varphi |^2 \frac {1+|x|^2} 2\Big) \Big(\int \Big(\frac2{1+|x|^2} \Big)^{n}\Big)^{-\frac 1 n},
}
where both inequalities are in fact strict ones, since the term \eqref{eq7.5} is not
proportional to $1+|x|^2$. In view of the fact that $x_1$ is odd, one can easily show that the quotient of the  two right-hand sides is
$\frac {n^2} 4 \omega_n^{2\slash n}$, and hence the quotient of the  two left  hand sides  is strictly less than $\frac {n^2} 4 \omega_n^{2\slash n}$.  Comparing to the counterpart on $\S^n$, it is not very direct  to observe \eqref{fact} on $\R^n$.

\medskip

\

\noindent{\sc Acknowledgment.}
We would like to  thank  R. Frank and J. C. Wei  for stimulating  discussion.

\bibliographystyle{alpha}
\bibliography{BibTemplate.bib}

\end{document}